\documentclass[a4paper,10pt, oneside]{article}
\usepackage{amsmath,amssymb,amsthm,mathrsfs,graphicx}
\usepackage[inline]{enumitem}
\usepackage[font=small,labelfont=md,textfont=it]{caption}
\usepackage{floatrow,float}
\usepackage[titletoc, title]{appendix}
\usepackage[colorlinks,linkcolor=blue,citecolor=blue]{hyperref}
\usepackage{etoolbox}
\usepackage{longtable}
\usepackage{diagbox}
\usepackage{booktabs,makecell,multirow}
\usepackage[capitalise, sort]{cleveref}
\usepackage{cases,color}
\crefname{equation}{}{}
\crefname{lem}{Lemma}{Lemmas}
\crefname{thm}{Theorem}{Theorems}
\crefname{discr}{Discretization}{Discretizations}
\usepackage{authblk}

\DeclareMathOperator{\D}{D}

\apptocmd{\sloppy}{\hbadness 10000\relax}{}{}

\newcommand{\dual}[1]{\langle {#1} \rangle}

\newcommand{\nm}[1]{\lVert {#1} \rVert}
\newcommand{\Nm}[1]{\left\lVert {#1} \right\rVert}
\newcommand{\snm}[1]{\lvert {#1} \rvert}

\newcommand{\ssnm}[1]
{
  \left\vert\kern-0.25ex
  \left\vert\kern-0.25ex
  \left\vert
  {#1}
  \right\vert\kern-0.25ex
  \right\vert\kern-0.25ex
  \right\vert
}

\makeatletter
\def\spher@harm#1{%
  \vbox{\hbox{%
    \offinterlineskip
    \valign{&\hb@xt@2\p@{\hss$##$\hss}\vskip.2ex\cr#1\crcr}%
  }\vskip-.36ex}%
}
\def\gshone{\spher@harm{.}}
\def\gshtwo{\spher@harm{.&.}}
\def\gshthree{\spher@harm{.&.&.}}
\let\gsh\spher@harm
\makeatother

\newtheorem{discr}{Discretization}

\newtheorem{lem}{Lemma}[section]
\newtheorem{rem}{Remark}[section]
\newtheorem{thm}{Theorem}[section]

\makeatletter\def\@captype{table}\makeatother

\begin{document}

\title{
  \Large \bf Analysis of the L1 scheme for fractional wave equations with nonsmooth data
  \thanks
{
	Binjie Li was supported in part by the National Natural Science
	Foundation of China (NSFC) Grant No.~11901410, Tao Wang was supported in part by the China Postdoctoral Science Foundation (CPSF) Grant No.~2019M66294, and Xiaoping Xie
	was supported in part by the National Natural Science Foundation of
	China (NSFC) Grant No.~11771312.}}
\author[1]{Binjie Li\thanks{libinjie@scu.edu.cn, libinjie@aliyun.com}}
\author[2]{Tao Wang\thanks{Corresponding author: wangtao5233@hotmail.com}}
\author[1]{Xiaoping Xie\thanks{xpxie@scu.edu.cn}}
\affil[1]{School of Mathematics, Sichuan University, Chengdu 610064, China}
\affil[2]{South China Research Center for Applied Mathematics and Interdisciplinary Studies, South China Normal University, Guangzhou 510631, China}

\maketitle

\begin{abstract}
	This paper analyzes the well-known L1 scheme for fractional wave equations with
	nonsmooth data. A new stability estimate is obtained, and the temporal accuracy $
	\mathcal O(\tau^{3-\alpha}) $ is derived for the nonsmooth data. In addition,  a modified L1 scheme is proposed,  stability and temporal accuracy $ \mathcal O(\tau^2) $ are derived
	for this scheme with nonsmooth data. The convergence of these
	schemes in inhomogeneous case are also established.  Finally,
	numerical experiments are performed to verify the theoretical results.
\end{abstract}
\medskip\noindent{\bf Keywords:} fractional wave equation, L1 scheme, stability,
convergence, nonsmooth data.

\section{Introduction}
Let $ 1 < \alpha < 2 $ and $ \Omega \subset \mathbb R^d $ ($d=1,2,3$) be a convex $ d
$-polytope. We consider the following fractional wave equation:
\begin{equation}
\label{eq:model}
\D_{0+}^{\alpha-1}(u'-u_1)(t) - \Delta u(t) = f(t),
\quad t > 0,
\end{equation}
subjected to the initial value condition $ u(0) = u_0 $, where $ u(t) \in
H_0^1(\Omega) $ for all $ t > 0 $, $ u_0 $, $ u_1 $ and $ f $ are given functions,
and $ \D_{0+}^{\alpha-1} $ is a Riemann-Liouville fractional differential operator of
order $ \alpha-1 $.

As a extension of integer order equation; the fractional diffusion and wave equations are widely used to model some processes with non-local effect, see \cite{nigmatullin1986the,bouchaud1990anomalous,carpinteri2011a,gerolymatou2006modelling}. We also refer readers to \cite{Kilbas2006Theory} for more background of fractional differential equations. By now there is an extensive literature on the numerical treatment of fractional
diffusion and wave equations.
Some of these researches give the convergence result under the condition that the solution is a $C^2$- or $C^3$- function in time. However, it is well known that the solution of a fractional diffusion (or wave) equation
generally has singularity in time despite how smooth the inital data is \cite{Jin2016}. In fact, the main challenge  is to design stable numerical scheme and to derive  convergence result, without regularity
restrictions on the solution, especially for the case with nonsmooth data.

Let us give a brief introduction of two kinds of numerical methods for solving fractional diffusion equations with nonsmooth data: the L1-type method \cite{Langlands2005The,Lin2007Finite,Gao2014A,Ren2017,Liao2018}, discontinuous Galerkin method \cite{Mustapha2015Time,Mustapha2012Superconvergence,Mustapha2011Piecewise,Adolfsson2003}. The L1-type method use L1 scheme to approximate the fractional derivative, these methods are very popular due to their ease of implementation. Jin et~al.~\cite{Jin2015} proved that
the L1 scheme is of temporal accuracy $ \mathcal O(\tau) $ for fractional
diffusion equations with smooth and nonsmooth initial data. Yan et~al.~\cite{Yan2018}
proposed a modified L1 scheme for fractional diffusion equations, which possesses
temporal accuracy $ \mathcal O(\tau^{2-\alpha}) $ for smooth and nonsmooth initial
data. The discontinuous Galerkin method use the finite element method to approximate the fractional derivative. McLean and Mustapha \cite{McLean2015} showed that the piecewise constant discontinuous Galerkin method is of temporal accuracy $ \mathcal O(\tau) $ for fractional
diffusion equations with  nonsmooth initial data. Li et~al. \cite{Li2018Analysis} investigate the regularity of fractional diffusion equations with nonsmooth data and they proved that discontinuous Galerkin method possesses optimal convergence rates in $L^2(0,T;L^2(\Omega))$ and $L^2(0,T;H^1(\Omega))$ norm, with respect to the regularity of the solution. For more related works, we refer reader to \cite{Yuste2005,li2009a,Chen2015spectral,Li2018Atime}.

Next, let us first briefly summarize some works on a variant of fractional wave equation:
\[
u'(t) - \Delta (\D_{0+}^{1-\alpha} u)(t) = u_1 + \D_{0+}^{1-\alpha} f(t),
\quad t > 0,
\]
which is obtained by applying $ \D_{0+}^{1-\alpha} $ to both sides of
\cref{eq:model}. For this equation,  McLean et~al.~\cite{McLean1993,McLean1996}
proposed two positive definite quadratures for the time fractional integral operator. Combing the
convolution quadratures in \cite{Lubich1988} and the backward
difference methods in time, Lubich et~al.~\cite{Lubich1996,Cuesta2006} proposed the first- and second-order time-stepping schemes and derived optimal error estimates with nonsmooth inital data.
Applying the famous discontinuous Galerkin method, Mustapha and
McLean~\cite{Mustapha2009Discontinuous} proposed a new class of algorithms. We note that the low-order algorithm in \cite{Mustapha2009Discontinuous} is identical to
the low-order algorithm proposed in \cite{McLean1996}.
For more related works, we
refer the reader to \cite{Cuesta2003,McLean2007}.

The study on fractional wave equation is limitied. Using the convolution quadratures in \cite{Lubich1988} and techniques in
\cite{Lubich1996}, Jin et~al.~\cite{Jin2016} developed first- and second-order time-stepping methods for
fractional wave equations and derived optimal error estimates with nonsmooth inital data.   In \cite{Luo2018Convergence}, the convergence in the $ H_0^1(\Omega) $-norm has been derived for  a low-order Petrov-Galerkin method with nonsmooth source term. We note that the low-order Petrov-Galerkin method in \cite{Luo2018Convergence} is identical to the L1 scheme.

As far as we know, the convergence in
the $ L^2(\Omega) $-norm of the L1 scheme for fractional wave equations with
nonsmooth data has not been established. In this paper, for a full discretization
using the L1 scheme in time and the standard $ P_1 $-element in space,  we derive a
new stability estimate and obtain the temporal accuracy $ \mathcal O(\tau^{3-\alpha})
$ in the $ L^2(\Omega) $-norm at positive times, with nonsmooth initial data. For
another full discretization using a modified L1 scheme in time and the $ P_1
$-element in space, we obtain the temporal accuracy $ \mathcal O(\tau^2) $ for
nonsmooth initial data. We also establish the convergence of the two
discretizations in inhomogeneous case (i.e., $ f \not\equiv 0 $). The derived error
estimates require that the temporal grid is uniform and that $ \tau^\alpha /
h_{\min}^2 $ is uniformly bounded, where $ h_{\min} $ is the minimum diameter of the
elements in the spatial triangulation. Our analysis implies that for nonzero initial
value $ u_0 $ large ratio $ \tau^\alpha/h_{\min}^2 $ will significantly worsen the
temporal accuracy of the L1 scheme, and this is confirmed by the numerical result. To
our knowledge, this interesting phenomenon is firstly reported in this paper.


The rest of this paper is organized as follows. \cref{sec:ode} establishes the
stability and convergence of the L1 scheme and a modified L1 scheme for a fractional
ordinary equation. \cref{sec:main} derives the stability and convergence of two full
discretizations for problem \cref{eq:model}, which use the L1 scheme and a modified
L1 scheme in time, respectively. \cref{sec:numer} performs several numerical
experiments to verify the theoretical results. Finally, \cref{sec:conclusion}
provides some concluding remarks.

\section{Preliminaries}
Let $ -\infty \leqslant a < b \leqslant \infty $ and assume that $ X $ is a separable
Hilbert space $ X $ with inner product $ (\cdot,\cdot)_X $. For any $ -\infty <
\gamma < 0 $, define
\begin{align*}
  (\D_{a+}^\gamma v)(t) &:= \frac1{\Gamma(-\gamma)}
  \int_a^t (t-s)^{-\gamma-1} v(s) \, \mathrm{d}s,
  \quad a < t < b, \\
  (\D_{b-}^\gamma v)(t) &:= \frac1{\Gamma(-\gamma)}
  \int_t^b (s-t)^{-\gamma-1} v(s) \, \mathrm{d}s,
  \quad a < t < b,
\end{align*}
for all $ v \in L^1(a,b;X) $, where $ \Gamma(\cdot) $ is the gamma function. For any
$ m \leqslant \gamma < m+1 $ with $ m \in \mathbb N $, define
\begin{align*}
  \D_{a+}^\gamma v &:= \D^{m+1} \D_{a+}^{\gamma-m-1} v, \\
  \D_{b-}^\gamma v &:= (-1)^{m+1} \D^{m+1} \D_{b-}^{\gamma-m-1} v,
\end{align*}
for all $ v \in L^1(a,b;X) $, where $ \D $ is the first order differential operator
in the distribution sense.

Then we introduce some properties of fractional calculus operators used in this
paper. Define
\begin{align*}
  {}_0H^1(a,b;X) := \big\{
    v \in L^2(a,b;X): v' \in L^2(a,b;X), \, \lim_{t \to {a+}} v(t) = 0
  \big\}, \\
  {}^0H^1(a,b;X) := \big\{
    v \in L^2(a,b;X): v' \in L^2(a,b;X), \, \lim_{t \to {b-}} v(t) = 0
  \big\}.
\end{align*}
Assume that $ 0 < \gamma < 1 $. Define
\begin{align*}
  {}_0H^\gamma(a,b;X) &:= [L^2(a,b;X), {}_0H^1(a,b;X)]_{\gamma,2}, \\
  {}^0H^\gamma(a,b;X) &:= [L^2(a,b;X), {}^0H^1(a,b;X)]_{\gamma,2},
\end{align*}
where $ [\cdot,\cdot]_{\gamma,2} $ means the interpolation space defined by the
famous $K$-method \cite{Tartar2007}. We use $ {}_0H^{-\gamma}(a,b;X) $ and $
{}^0H^{-\gamma}(a,b;X) $ to denote the dual spaces of $ {}^0H^\gamma(a,b;X) $ and $
{}_0H^\gamma(a,b;X) $, respectively. By \cite[Lemma 3.3]{Luo2018Convergence} we have
that, for any $ v \in L^2(a,b;X) $,
\[
  \nm{\D_{b-}^{-\gamma} v}_{{}^0H^\gamma(a,b;X)} \leqslant
  C \nm{v}_{L^2(a,b;X)},
\]
where $ C $ is a positive constant depending only on $ \gamma $. Therefore,
we can define $ \D_{a+}^{-\gamma}: {}_0H^{-\gamma}(a,b;X) \to L^2(a,b;X) $ by that
\[
  \int_a^b \big( \D_{a+}^{-\gamma}v(t), w(t) \big)_X \, \mathrm{d}t =
  \dual{v, \D_{b-}^{-\gamma} w}_{{}^0H^\gamma(a,b;X)}
\]
for all $ v \in {}_0H^{-\gamma}(a,b;X) $ and $ w \in L^2(a,b;X) $, where $
\dual{\cdot,\cdot}_{{}^0H^\gamma(a,b;X)} $ means the duality pairing between $
{}_0H^{-\gamma}(a,b;X) $ and $ {}^0H^\gamma(a,b;X) $. Moreover, it is evident that
\begin{equation}
  \label{eq:frac_regu}
  \nm{\D_{a+}^{-\gamma} v}_{L^2(a,b;X)} \leqslant
  C \nm{v}_{{}_0H^{-\gamma}(a,b;X)},
  \quad \forall v \in {}_0H^{-\gamma}(a,b;X),
\end{equation}
where $ C $ is a positive constant depending only on $ \gamma $.

\begin{lem}
  \label{lem:frac_calcu}
  Assume that $ v \in {}_0H^\gamma(a,b;X) $ and $ w \in {}^0H^\gamma(a,b;X) $, with $
  0 < \gamma < 1/2 $. Then
  \begin{small}
  \begin{align*}
    & C_1 \nm{\D_{a+}^\gamma v}_{L^2(a,b;X)}^2 \leqslant
    \int_a^b (\D_{a+}^\gamma v(t), \D_{b-}^\gamma v(t))_X \, \mathrm{d}t
    \leqslant C_2 \nm{\D_{a+}^\gamma v}_{L^2(a,b;X)}^2, \\
    & C_1 \nm{\D_{b-}^\gamma v}_{L^2(a,b;X)}^2 \leqslant
    \int_a^b (\D_{a+}^\gamma v(t), \D_{b-}^\gamma v(t))_X \, \mathrm{d}t
    \leqslant C_2 \nm{\D_{b-}^\gamma v}_{L^2(a,b;X)}^2, \\
    & \dual{\D_{a+}^{2\gamma} v, w}_{{}^0H^{\gamma}(a,b;X)} =
    \int_a^b (\D_{a+}^\gamma v(t), \D_{b-}^\gamma w(t))_X \, \mathrm{d}t =
    \dual{\D_{b-}^{2\gamma} w, v}_{{}_0H^{\gamma}(a,b;X)},
  \end{align*}
  \end{small}
  where $ C_1 $ and $ C_2 $ are two positive constants depending only on $ \gamma $.
\end{lem}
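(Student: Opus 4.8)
The plan is to reduce everything to the scalar whole-line theory through a basis expansion and a zero-extension argument, and then read off the three statements from the classical Fourier identity for Liouville derivatives. First I would fix an orthonormal basis $\{e_k\}$ of $X$ and write $v=\sum_k v_k e_k$, $w=\sum_k w_k e_k$ with $v_k=(v,e_k)_X$. Since $\D_{a+}^\gamma$ and $\D_{b-}^\gamma$ act only in the time variable, they commute with the bounded projections $(\cdot,e_k)_X$, so the three quantities in the statement decouple as $\ell^2$-sums over $k$: for instance $\int_a^b(\D_{a+}^\gamma v,\D_{b-}^\gamma v)_X\,\mathrm{d}t=\sum_k\int_a^b(\D_{a+}^\gamma v_k)(\D_{b-}^\gamma v_k)\,\mathrm{d}t$ and $\nm{\D_{a+}^\gamma v}_{L^2(a,b;X)}^2=\sum_k\nm{\D_{a+}^\gamma v_k}_{L^2(a,b)}^2$. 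Because the projections map $_0H^\gamma(a,b;X)$ boundedly into $_0H^\gamma(a,b)$ (interpolating the obvious $L^2$ and $_0H^1$ bounds), each $v_k$ is an admissible scalar function, and it suffices to prove the scalar inequalities with constants depending only on $\gamma$; summing over $k$ then returns the vector-valued statement.

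For the scalar problem I would exploit that $0<\gamma<1/2$. In this range $_0H^\gamma(a,b)={}^0H^\gamma(a,b)=H^\gamma(a,b)$ with equivalent norms, and extension by zero is bounded from $H^\gamma(a,b)$ into $H^\gamma(\mathbb R)$ (Lions--Magenes; no trace obstruction below $1/2$). Let $\tilde v$ denote the zero extension of $v$. The key observation is that, because $\tilde v$ vanishes to the left of $a$, the operator $\D_{a+}^\gamma v$ coincides on $(a,b)$ with the restriction of the whole-line Liouville derivative $\D_+^\gamma\tilde v$; symmetrically $\D_{b-}^\gamma v$ restricts $\D_-^\gamma\tilde v$. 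Moreover $\D_+^\gamma\tilde v$ vanishes on $(-\infty,a)$ and $\D_-^\gamma\tilde v$ vanishes on $(b,\infty)$, so the product $(\D_+^\gamma\tilde v)(\D_-^\gamma\tilde v)$ is supported in $(a,b)$. A Parseval computation using $\widehat{\D_\pm^\gamma\tilde v}(\xi)=(\pm i\xi)^\gamma\hat{\tilde v}(\xi)$ then yields the Ervin--Roop identity
\[
\int_a^b(\D_{a+}^\gamma v)(\D_{b-}^\gamma v)\,\mathrm{d}t=\int_{\mathbb R}(\D_+^\gamma\tilde v)(\D_-^\gamma\tilde v)\,\mathrm{d}t=\cos(\gamma\pi)\,\nm{\D_+^\gamma\tilde v}_{L^2(\mathbb R)}^2=\cos(\gamma\pi)\,\nm{\D_-^\gamma\tilde v}_{L^2(\mathbb R)}^2,
\]
where $\cos(\gamma\pi)>0$ precisely because $\gamma<1/2$. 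Since $\nm{\D_{a+}^\gamma v}_{L^2(a,b)}\leqslant\nm{\D_+^\gamma\tilde v}_{L^2(\mathbb R)}$ (and likewise for $\D_{b-}^\gamma$), the lower bounds in the first two displayed inequalities follow at once with $C_1=\cos(\gamma\pi)$.

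For the matching upper bounds I would invoke the equivalence, with constants depending only on $\gamma$, of the one-sided fractional seminorms $\nm{\D_{a+}^\gamma v}_{L^2(a,b)}$ and $\nm{\D_{b-}^\gamma v}_{L^2(a,b)}$ with the Gagliardo seminorm $\nm{\D_\pm^\gamma\tilde v}_{L^2(\mathbb R)}$; combined with Cauchy--Schwarz on the middle integral this furnishes $C_2$. This seminorm equivalence is the technical heart of the argument: one direction is trivial restriction, but the reverse bound $\nm{\D_+^\gamma\tilde v}_{L^2(\mathbb R)}\leqslant C(\gamma)\nm{\D_{a+}^\gamma v}_{L^2(a,b)}$ amounts to controlling the tail of $\D_+^\gamma\tilde v$ on $(b,\infty)$ by its restriction to $(a,b)$, and the scale invariance of the whole setup is what lets the constant depend on $\gamma$ alone. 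I expect this tail estimate, equivalently the statement that the interval carries a definite fraction of the whole-line energy, to be the main obstacle; I would establish it either by a compactness-plus-scaling argument on $(0,1)$ or by citing the corresponding Ervin--Roop equivalence.

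Finally, for the identity on the last line I would argue by fractional integration by parts together with the duality pairings set up in the preliminaries. Using the definitions of $\D_{a+}^{2\gamma}v\in{}_0H^{-\gamma}(a,b;X)$ and $\D_{b-}^{2\gamma}w\in{}^0H^{-\gamma}(a,b;X)$ as adjoints of the fractional-integration operators, together with the semigroup relations $\D_{a+}^\gamma\D_{a+}^\gamma=\D_{a+}^{2\gamma}$ and $\D_{b-}^\gamma\D_{b-}^\gamma=\D_{b-}^{2\gamma}$ (valid since $2\gamma<1$), the computation $\int_a^b(\D_{a+}^\gamma v,\D_{b-}^\gamma w)_X\,\mathrm{d}t=\int_a^b(v,\D_{b-}^{2\gamma}w)_X\,\mathrm{d}t=\dual{\D_{b-}^{2\gamma}w,v}$, and its mirror image obtained by instead moving $\D_{b-}^\gamma$ back onto the first factor, give the two equalities. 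The work here is to make these manipulations rigorous at the regularity $v\in{}_0H^\gamma$, $w\in{}^0H^\gamma$: I would first verify them for smooth functions compactly supported in $(a,b)$, where all operators are classical and Fubini applies, and then pass to the limit using the density of such functions and the continuity of every bilinear form and duality pairing involved.
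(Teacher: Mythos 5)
The paper gives no proof of this lemma: the remark immediately following it simply refers the reader to Ervin and Roop \cite{Ervin2006}, and your sketch is essentially a faithful reconstruction of that cited argument (zero extension for $\gamma<1/2$, the Plancherel identity $\int_{\mathbb R}(\D_+^\gamma\tilde v)(\D_-^\gamma\tilde v)=\cos(\gamma\pi)\,\nm{\D_+^\gamma\tilde v}_{L^2(\mathbb R)}^2$, the left/right seminorm equivalence, and density plus adjointness for the duality line), extended to the vector-valued setting by the standard componentwise reduction in a separable Hilbert space. The one piece you leave open --- the bound $\nm{\D_+^\gamma\tilde v}_{L^2(\mathbb R)}\leqslant C_\gamma\nm{\D_{a+}^\gamma v}_{L^2(a,b)}$ needed for the upper bounds --- is exactly the technical content of the Ervin--Roop equivalence that the paper is itself citing, so deferring to it is consistent with what the paper does.
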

\begin{rem}
  For the proof of \cref{lem:frac_calcu}, we refer the reader to \cite{Ervin2006}.
  Assume that $ 0 < \gamma < 1/2 $. If $ v \in {}_0H^\gamma(a,b;X) $ and $ w \in
  {}^0H^\gamma(a,b;X) $ satisfy that $ \D_{a+}^{2\gamma}v \in L^p(a,b;X) $ and $ w
  \in L^{p/(p-1)}(a,b;X) $ for some $ 1 < p < \infty $, then
  \[
    \dual{\D_{a+}^{2\gamma}v, w}_{{}^0H^\gamma(a,b;X)} =
    \int_a^b (\D_{a+}^{2\gamma} v(t), w(t))_X \, \mathrm{d}t.
  \]
\end{rem}

Finally, we introduce some conventions as follows: $ H_0^1(\Omega) $ denotes the
usual Sobolev space, and $ H^{-1}(\Omega) $ is its dual space; the spaces $
{}_0H^\gamma(a,b; \mathbb R) $ and $ {}^0H^\gamma(a,b; \mathbb R) $ are abbreviated
to $ {}_0H^\gamma(a,b) $ and $ {}^0H^\gamma(a,b) $, respectively; $ C_\times $ means
a generic positive constant depending only on its subscript(s), and its value may
differ at each occurrence; for an interval $ \omega \subset \mathbb R $, the notation
$ \dual{p,q}_\omega $ denotes $ \int_\omega pq $ whenever $ pq \in L^1(\omega) $.

\section{Two discretizations of a fractional ordinary equation}
\label{sec:ode}
This section considers two discretizations of the following fractional ordinary
equation:
\begin{equation}
  \label{eq:ode-y}
  \D_{0+}^{\alpha-1}(y' - y_1)(t) + \lambda y(t) = f(t),
  \quad t > 0,
\end{equation}
subjected to the initial value condition $ y(0) = y_0 $, where $ y_0, y_1 \in \mathbb R $,
$ f \in L^1(0,\infty) \cap {}_0H^{(1-\alpha)/2}(0,\infty) $, and $ \lambda \geqslant 1 $
is a positive constant. Let $ \mu := \lambda \tau^\alpha/2 $ and define $ t_j := j \tau $
for each $ j \in \mathbb N $, where $ \tau $ is a positive constant. Applying the
L1-scheme proposed in \cite{Sun2006}, we obtain the first discretization of equation 
\cref{eq:ode-y}.
\begin{rem}
In order to obtain the error estimates of PDE \cref{eq:model}, $\lambda$ will be chosen as one of the eigen values of the discrete Laplace operator $-\Delta_h$ in the next section. 
\end{rem}

\begin{discr}
  \label{discr:ode-1}
  Let $ Y_0 = y_0 $; for each $ k \in \mathbb N $, the value of $ Y_{k+1} $ is
  determined by
  \begin{small}
  \begin{equation}
    \label{eq:Y}
    \begin{aligned}
      & (Y_1 - Y_0)(b_{k+1} - b_k) +
      \sum_{j=1}^k(Y_{j+1} - 2Y_j + Y_{j-1})
      (b_{k-j+1} - b_{k-j}) \\
      & \quad {} + \mu (Y_k + Y_{k+1}) =
      \tau^{\alpha-1} \int_{t_k}^{t_{k+1}} f(t) \, \mathrm{d}t +
      \tau y_1(b_{k+1} - b_k),
    \end{aligned}
  \end{equation}
  \end{small}
  where $ b_j := j^{2-\alpha}/\Gamma(3-\alpha) $, $ j \in \mathbb N $.
\end{discr}
\begin{rem}
  The above discretization is actually an variant of the temporal discretization in
  \cite{Sun2006}, but it is identical to a low-order Petrov-Galerkin method analyzed
  in \cite{Luo2018Convergence}.
\end{rem}
The second discretization is a simple modification of the first one.
\begin{discr}
  \label{discr:ode-2}
  Let $ \mathcal Y_0 = y_0 $; for each $ k \in \mathbb N $, the value of $ \mathcal
  Y_{k+1} $ is determined by
  \begin{small}
  \begin{equation}
    \label{eq:new-Y}
    \begin{aligned}
      & (\mathcal Y_1 - \mathcal Y_0)(\beta_{k+1} - \beta_k) +
      \sum_{j=1}^k(\mathcal Y_{j+1} - 2 \mathcal Y_j + \mathcal Y_{j-1})
      (\beta_{k-j+1} - \beta_{k-j}) \\
      & \quad {} + \mu (\mathcal Y_k + \mathcal Y_{k+1}) =
      \tau^{\alpha-1} \int_{t_k}^{t_{k+1}} f(t) \, \mathrm{d}t +
      \tau y_1(\beta_{k+1} - \beta_k),
    \end{aligned}
  \end{equation}
  \end{small}
  where $ \beta_1 = b_1 + 2\sin(\alpha\pi/2) \sum_{k=1}^\infty (2k\pi)^{\alpha-3} $
  and $ \beta_k := b_k $ for all $ k \in \mathbb N \setminus \{1\} $.
\end{discr}
\begin{rem}
In the numerical analysis of \cref{discr:ode-1} (cf. \cref{rem:motivation} and \cref{rem:motivation2}), we found that $(\widehat b(z) - z^{\alpha-3})(0) \neq 0
$
caused $ (3-\alpha) $-order accuracy of the first discretization,  where $\widehat b(z)$ is the discrete Laplace transform of $(b_k)_{k=0}^{\infty} $. This is the
motivation for the second discretization. Let $ \widehat\beta(z) $ be the discrete
Laplace transform of $ (\beta_k)_{k=0}^\infty $. The definition of the sequence $
(\beta_k)_{k=0}^\infty $ implies
\begin{align*}
\widehat\beta(z) &= \widehat b(z) + 2\sin(\alpha\pi/2)
\sum_{k=1}^\infty (2k\pi)^{\alpha-3} \\
&= \widehat b(z) - \big( \widehat b(z) - z^{\alpha-3} \big)(0)
\quad\text{(by \cref{eq:wt-b-2}).}
\end{align*}
Hence, $ \big( \widehat\beta(z) - z^{\alpha-3} \big)(0) = 0 $.
\end{rem}

In the rest of \cref{sec:ode}, we shall use the well-known Laplace transform technique to analyze \cref{discr:ode-1,discr:ode-2}. Firstly, we prove that the discrete Laplace transform of numerical solutions are well defined (i.e. they will not blow up in some places). Secondly, we give the integral representations of the exact and numerical solutions. Finally, we establish the error estimates by comparing the differences between the above two integrals.
\subsection{Stability of the two discretizations}
By an energy argument, it is easy to derive the following stability estimate of
\cref{discr:ode-1}.
\begin{lem}
  \label{lem:Y_k-stability}
  For each $ m \in \mathbb N_{>0} $,
  \begin{small}
  \begin{equation} \label{eq:Y-stability}
    \snm{Y_m} \leqslant C_\alpha \Big(
      \snm{y_0} + \lambda^{-1/2}
      \big(
        t_m^{1-\alpha/2} \snm{y_1} +
        \nm{f}_{{}_0H^{(1-\alpha)/2}(0,t_m)}
      \big)
    \Big).
  \end{equation}
  \end{small}
\end{lem}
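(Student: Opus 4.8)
The plan is to recast \cref{discr:ode-1} as a Petrov--Galerkin variational identity and then run a fractional energy argument powered by the coercivity in \cref{lem:frac_calcu}. Write $\gamma := (\alpha-1)/2 \in (0,1/2)$, let $\widehat Y$ be the continuous piecewise-linear function on the grid with $\widehat Y(t_j) = Y_j$, and set $w := \widehat Y' - y_1$, a piecewise-constant function on $(0,t_m)$. Using the equivalence with the low-order Petrov--Galerkin scheme of \cite{Luo2018Convergence} noted above, \cref{eq:Y} is (up to the common factor $\tau^{\alpha-1}$) the statement that $\dual{\D_{0+}^{\alpha-1}w, \chi}_{(0,t_m)} + \lambda\dual{\widehat Y,\chi}_{(0,t_m)} = \dual{f,\chi}_{(0,t_m)}$ for every piecewise-constant $\chi$ on the same grid. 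I would first verify this identification, the key points being that $\int_{t_k}^{t_{k+1}}\widehat Y = \tfrac\tau2(Y_k + Y_{k+1})$ reproduces the $\mu$-term and that the coefficients $b_j$ encode $\dual{\D_{0+}^{\alpha-1}\widehat Y', \mathbf 1_{(t_k,t_{k+1})}}$, while $\tau y_1(b_{k+1}-b_k) = \tau^{\alpha-1}\dual{\D_{0+}^{\alpha-1}y_1, \mathbf 1_{(t_k,t_{k+1})}}$, so moving this last term to the left produces exactly $\D_{0+}^{\alpha-1}w$.

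Since $w$ is piecewise constant it lies in ${}_0H^{\gamma}(0,t_m) = H^\gamma(0,t_m)$ (as $\gamma < 1/2$), hence is admissible. Choosing $\chi = w$ and using $\dual{\widehat Y, \widehat Y'}_{(0,t_m)} = \tfrac12(Y_m^2 - y_0^2)$ gives the energy identity
\begin{align*}
  \dual{\D_{0+}^{\alpha-1}w, w}_{(0,t_m)} + \tfrac{\lambda}2 Y_m^2
  = \tfrac{\lambda}2 y_0^2 + \lambda y_1 \dual{\widehat Y, 1}_{(0,t_m)} + \dual{f, w}_{(0,t_m)}.
\end{align*}
By \cref{lem:frac_calcu} the left-hand fractional term is bounded below both by $C_1\nm{\D_{0+}^\gamma w}_{L^2(0,t_m)}^2$ and by $C_1\nm{\D_{t_m-}^\gamma w}_{L^2(0,t_m)}^2$, which furnishes the coercivity needed to absorb the data terms.

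For the source term I would write $w = \D_{t_m-}^{-\gamma}\D_{t_m-}^\gamma w$ and invoke the adjoint relation defining $\D_{0+}^{-\gamma}$ together with \cref{eq:frac_regu} to get $\dual{f,w}_{(0,t_m)} = \dual{\D_{0+}^{-\gamma}f, \D_{t_m-}^\gamma w}_{(0,t_m)} \leqslant C\nm{f}_{{}_0H^{-\gamma}(0,t_m)}\nm{\D_{t_m-}^\gamma w}_{L^2(0,t_m)}$, which Young's inequality splits into a piece absorbed by the coercive term and $C\nm{f}_{{}_0H^{-\gamma}(0,t_m)}^2$. The delicate term, and the main obstacle, is $\lambda y_1\dual{\widehat Y,1}$: estimating it by splitting $\widehat Y = y_0 + y_1 t + \D_{0+}^{-1}w$ is hopelessly lossy, as it produces a spurious $\lambda t_m^2 y_1^2$. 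Instead I would use that the constant $1 = \sum_k \mathbf 1_{(t_k,t_{k+1})}$ is itself an admissible test function: testing the variational identity with $\chi = 1$ yields $\lambda\dual{\widehat Y,1} = \dual{f,1} - \dual{\D_{0+}^{\alpha-1}w, 1}$. Now \cref{lem:frac_calcu} gives $\dual{\D_{0+}^{\alpha-1}w,1} = \dual{\D_{0+}^\gamma w, \D_{t_m-}^\gamma 1}_{(0,t_m)}$, and the explicit computation $\nm{\D_{t_m-}^\gamma 1}_{L^2(0,t_m)} = C_\gamma t_m^{(1-2\gamma)/2} = C_\gamma t_m^{1-\alpha/2}$ is precisely what produces the factor $t_m^{1-\alpha/2}$ in the claim; the piece $\dual{f,1}$ is treated the same way via \cref{eq:frac_regu}. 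Thus $\lambda\snm{y_1}\,\snm{\dual{\widehat Y,1}} \lesssim \snm{y_1}\, t_m^{1-\alpha/2}\big(\nm{\D_{0+}^\gamma w}_{L^2(0,t_m)} + \nm{f}_{{}_0H^{-\gamma}(0,t_m)}\big)$, which Young's inequality again splits into an absorbable piece plus $C t_m^{2-\alpha}\snm{y_1}^2 + C\nm{f}^2_{{}_0H^{-\gamma}(0,t_m)}$.

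Collecting these bounds and absorbing all $\nm{\D_{0+}^\gamma w}^2$ and $\nm{\D_{t_m-}^\gamma w}^2$ contributions into the coercive term leaves $\tfrac\lambda2 Y_m^2 \lesssim \tfrac\lambda2 y_0^2 + \nm{f}_{{}_0H^{-\gamma}(0,t_m)}^2 + t_m^{2-\alpha}\snm{y_1}^2$. Dividing by $\lambda/2$, using $\lambda \geqslant 1$, and taking square roots via $\sqrt{a+b+c}\leqslant\sqrt a + \sqrt b + \sqrt c$ yields \cref{eq:Y-stability}, once one identifies ${}_0H^{-\gamma} = {}_0H^{(1-\alpha)/2}$. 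The step I expect to require real care is the clean Petrov--Galerkin identification of \cref{eq:Y}, since everything downstream — in particular the admissibility of both $w$ and the constant $1$ as test functions and the exact appearance of $\D_{0+}^{\alpha-1}w$ — hinges on it; the extraction of the sharp exponent $1-\alpha/2$ from $\nm{\D_{t_m-}^\gamma 1}_{L^2(0,t_m)}$ is the conceptual heart of the estimate.
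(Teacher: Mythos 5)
Your proposal is correct and follows essentially the same route as the paper: both recast \cref{eq:Y} as a variational identity tested against piecewise constants, invoke the coercivity and duality identities of \cref{lem:frac_calcu} with $\gamma=(\alpha-1)/2$, treat the source via $\dual{f,\cdot}=\dual{\D_{0+}^{(1-\alpha)/2}f,\D_{t_m-}^{(\alpha-1)/2}\cdot}$ together with \cref{eq:frac_regu}, and extract the factor $t_m^{1-\alpha/2}$ from the explicit $L^2(0,t_m)$-norm of a $((\alpha-1)/2)$-order fractional derivative of the constant function. The only divergence is bookkeeping of the $y_1$ term --- you fold it into $w=\widehat Y'-y_1$ and recover $\lambda y_1\dual{\widehat Y,1}$ by a second test with $\chi=1$, whereas the paper tests once with $Y'$ and bounds $y_1\dual{\D_{0+}^{(\alpha-1)/2}1,\D_{t_m-}^{(\alpha-1)/2}Y'}$ directly --- a slightly more roundabout but equally valid variant.
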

\begin{proof}
  Multiplying both sides of \cref{eq:Y} by $ \tau^{1-\alpha}(Y_{k+1} - Y_k) $ and
  summing over $ k $ from $ 0 $ to $ m-1 $, we obtain
  \begin{align*}
    \dual{\D_{0+}^{\alpha-1} Y', Y'}_{(0,t_m)} +
    \lambda \dual{Y, Y'}_{(0,t_m)} =
    \dual{f, Y'}_{(0,t_m)} + y_1
    \dual{\D_{0+}^{(\alpha-1)} 1, Y'}_{(0,t_m)},
  \end{align*}
  where, for each $k \in \mathbb N$, $Y$ is linear on the interval $[t_k,t_{k+1}]$ and $
  Y(t_k) = Y_k $.
  A straightforward computation then gives
  \begin{small}
  \begin{align*}
    & \dual{\D_{0+}^{\alpha-1}Y', Y'}_{(0,t_m)} +
    \lambda \dual{Y,Y'}_{(0,t_m)} \\
    \leqslant{} &
    \snm{
      \dual{f, Y'}_{(0,t_m)} +
      y_1 \dual{\D_{0+}^{\alpha-1}1, Y'}_{(0,t_m)}
    } \\
    ={} &
    \snm{
      \dual{\D_{0+}^{(\alpha-1)/2} \D_{0+}^{(1-\alpha)/2} f, Y'}_{(0,t_m)}
    } + \snm{
      y_1 \dual{\D_{0+}^{(\alpha-1)/2} \D_{0+}^{(\alpha-1)/2} 1, Y'}_{(0,t_m)}
    } \\
    ={} &
    \snm{
      \dual{\D_{0+}^{(1-\alpha)/2} f, \D_{t_m-}^{(\alpha-1)/2}Y'}_{(0,t_m)}
    } +
    \snm{
      y_1 \dual{\D_{0+}^{(\alpha-1)/2} 1, \D_{t_m-}^{(\alpha-1)/2}Y'}_{(0,t_m)}
    } \\
    \leqslant{} &
    \Big(
      \nm{\D_{0+}^{(1-\alpha)/2} f}_{L^2(0,t_m)} +
      \snm{y_1} \nm{\D_{0+}^{(\alpha-1)/2} 1}_{L^2(0,t_m)}
    \Big) \nm{\D_{t_m-}^{(\alpha-1)/2} Y'}_{L^2(0,t_m)} \\
    \leqslant{} &
  C_{\alpha}  \Big(
      \nm{\D_{0+}^{(1-\alpha)/2} f}_{L^2(0,t_m)} +
      t_m^{1-\alpha/2} \snm{y_1}
    \Big) \nm{\D_{t_m-}^{(\alpha-1)/2} Y'}_{L^2(0,t_m)}.
  \end{align*}
  \end{small}
  Using integration by parts yields
  \[
    \dual{Y,Y'}_{(0,t_m)} = (Y_m^2 - Y_0^2)/2,
  \]
  and by \cref{lem:frac_calcu} we have
  \begin{small}
  \begin{align*}
    C_1 \nm{\D_{t_m-}^{(\alpha-1)/2} Y'}^2_{L^2(0,t_m)} \leqslant
    C_2 \nm{\D_{0+}^{(\alpha-1)/2} Y'}_{L^2(0,t_m)}^2 \leqslant
    \dual{\D_{0+}^{\alpha-1}Y', Y'}_{(0,t_m)},
  \end{align*}
  \end{small}
  where $ C_1 $ and $ C_2 $ are two positive constants depending only on $ \alpha $. By
  the above three estimates and the Young's inequality with $ \epsilon $, a simple
  calculation gives
  \begin{align*}
    \snm{Y_m} \leqslant C_\alpha
    \left(
      \snm{y_0} + \lambda^{-1/2}
      \Big(
        t_m^{1-\alpha/2} \snm{y_1} +
        \nm{\D_{0+}^{(1-\alpha)/2} f}_{L^2(0,t_m)}
      \Big)
    \right).
  \end{align*}
  Therefore, \cref{eq:frac_regu} implies \cref{eq:Y-stability} and thus concludes the
  proof.
\end{proof}

To derive the stability of \cref{discr:ode-2}, for $z \in \mathbb C_{+}:= \{w \in \mathbb
C: \operatorname{Re} w > 0 \}$, we introduce the discrete Laplace transform of $
(b_k)_{k=0}^\infty $ by that
\[
  \widehat b(z) := \sum_{k=0}^\infty b_k e^{-kz}.
\]
By the routine analytic continuation technique, $ \widehat b $ has a Hankel integral
representation (see \cite{Wood1992})
\begin{equation}
  \label{eq:512}
  \widehat b(z) = \frac1{2\pi i}
  \int_{-\infty}^{(0+)} \frac{w^{\alpha-3}}{e^{z-w}-1} \, \mathrm{d}w,
  \quad z \in \mathbb C \setminus (-\infty, 0],
\end{equation}
where $ \int_{-\infty}^{({0+})} $ means an integral on a piecewise smooth and
non-self-intersecting path enclosing the negative real axis and orienting
counterclockwise, $ 0 $ and $ \{z+2k\pi i \neq 0: k \in \mathbb Z\} $ lie on the different
sides of this path, and $ w^{\alpha-3} $ is evaluated in the sense that
\[
  w^{\alpha-3} = e^{(\alpha-3) \operatorname{Log}w}.
\]
Therefore, by Cauchy's integral theorem and Cauchy's integral formula, we have (see
\cite[(13.1)]{Wood1992})
\begin{equation}
  \label{eq:wt-b-2}
  \widehat b(z) = \sum_{k=-\infty}^\infty
  (z + 2k\pi i)^{\alpha-3}
\end{equation}
for all $ z \in \mathbb C \setminus (-\infty,0] $ satisfying $ -2\pi < \operatorname{Im} z
< 2\pi $. From \cref{eq:512} it follows that
\[
  \overline{\widehat b(z)} = \widehat b(\overline z) \text{ for all }
  z \in \mathbb C \setminus (-\infty,0].
\]
From \cref{eq:wt-b-2} it follows that
\[
  \widehat b(z) - z^{\alpha-3} \text{ is analytic on }
  \{w \in \mathbb C: \snm{\operatorname{Im} w} < 2\pi \}.
\]
\begin{rem}The $ \widehat b(z) $ also has another representation \cite{Wood1992}, 
$$
\widehat{b}(z)=\frac{\text{Li}_{\alpha-2}(e^{-z})}{\Gamma(3-\alpha)},
$$
where the polylogarithm is defined by  \[
\text{Li}_{p}(z)=\sum_{k=1}^{\infty} \frac{z^k}{k^p}, \quad \text{for} \ \snm{z}<1 \ \text{and} \ p\in \mathbb{C}.
\]

\end{rem}

\begin{lem}
  \label{lem:Y_k-stability-new}
  For each $ m \in \mathbb N_{>0} $,
  \begin{small}
  \begin{equation} \label{eq:Y-stability-new}
    \snm{\mathcal Y_m}  \leqslant C_\alpha \Big(
      \snm{y_0} + \lambda^{-1/2} \big(
        t_m^{1-\alpha/2} \snm{y_1} +
        \nm{f}_{{}_0H^{(1-\alpha)/2}(0,t_m)}
      \big)
    \Big).
  \end{equation}
  \end{small}
\end{lem}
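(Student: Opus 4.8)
The plan is to replay, almost verbatim, the energy argument that established \cref{lem:Y_k-stability}, the only genuinely new ingredient being the positive definiteness of the \emph{modified} discrete fractional operator generated by $(\beta_k)$. First I would multiply both sides of \cref{eq:new-Y} by $\tau^{1-\alpha}(\mathcal Y_{k+1}-\mathcal Y_k)$ and sum over $k$ from $0$ to $m-1$, letting $\mathcal Y$ be the piecewise linear function with $\mathcal Y(t_k)=\mathcal Y_k$. This produces
\[
  Q_\beta + \lambda\dual{\mathcal Y,\mathcal Y'}_{(0,t_m)} = \dual{f,\mathcal Y'}_{(0,t_m)} + y_1\dual{\D_{0+}^{\alpha-1}1,\mathcal Y'}_{(0,t_m)},
\]
where $Q_\beta$ denotes the quadratic form coming from the left-hand convolution in \cref{eq:new-Y}, i.e. the analogue of $\dual{\D_{0+}^{\alpha-1}\mathcal Y',\mathcal Y'}_{(0,t_m)}$ with $(b_k)$ replaced by $(\beta_k)$. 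The integration by parts $\dual{\mathcal Y,\mathcal Y'}_{(0,t_m)}=(\mathcal Y_m^2-\mathcal Y_0^2)/2$, the estimation of the right-hand side by fractional duality together with \cref{lem:frac_calcu}, the Young inequality with $\epsilon$, and the final appeal to \cref{eq:frac_regu} are then word for word identical to \cref{lem:Y_k-stability}. Consequently the whole lemma reduces to the single discrete coercivity estimate $Q_\beta\geq C_\alpha\nm{\D_{t_m-}^{(\alpha-1)/2}\mathcal Y'}_{L^2(0,t_m)}^2\geq 0$.

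For \cref{discr:ode-1} this coercivity is automatic, because $(b_k)$ reproduces $\D_{0+}^{\alpha-1}$ exactly on piecewise linear functions, so that the corresponding form equals the continuous $\dual{\D_{0+}^{\alpha-1}\mathcal Y',\mathcal Y'}_{(0,t_m)}$ and \cref{lem:frac_calcu} applies. For $(\beta_k)$ this reproduction is lost, and the obvious attempt, writing $Q_\beta=Q_b+\delta Q$ where $\delta Q$ is the contribution of the single modified coefficient $\beta_1$ and hoping $\delta Q\geq 0$, fails: since only $\beta_1$ is altered, $\delta Q$ is a fixed banded Toeplitz form whose Fourier symbol is a positive multiple of $\cos\theta(\cos\theta-1)$, which changes sign on $(-\pi,\pi)$. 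This sign-indefiniteness is precisely the obstacle, and it is why a termwise comparison cannot work and the discrete Laplace transform is brought in. I would instead pass to the generating-function side: extending $(\mathcal Y_{k+1}-\mathcal Y_k)$ by zero beyond $k=m-1$, writing $g(\theta):=\sum_{k=0}^{m-1}(\mathcal Y_{k+1}-\mathcal Y_k)e^{ik\theta}$, and invoking Parseval gives
\[
  Q_\beta = \frac{\tau^{1-\alpha}}{2\pi}\int_{-\pi}^{\pi}(2\cos\theta-2)\,\operatorname{Re}\widehat\beta(e^{i\theta})\,\snm{g(\theta)}^2\,\mathrm d\theta,
\]
in which $\operatorname{Re}\widehat\beta(e^{i\theta})$ is understood through the analytic continuation \cref{eq:wt-b-2} and the weight $(2\cos\theta-2)\operatorname{Re}\widehat\beta$ has only an integrable $\theta^{\alpha-1}$ singularity at $\theta=0$ (this is the sense in which the transform ``does not blow up''). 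Since $2\cos\theta-2\leq 0$, the coercivity is now equivalent to the pointwise sign statement $\operatorname{Re}\widehat\beta(e^{i\theta})\leq 0$ on $(-\pi,\pi)\setminus\{0\}$.

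This pointwise estimate is the crux, and I expect it to be the only real difficulty. Using $\widehat\beta(z)=\widehat b(z)+(\beta_1-b_1)e^{-z}$ and the series \cref{eq:wt-b-2} at $z=-i\theta$ (legitimate since $\snm{\operatorname{Im}z}<2\pi$ there), a direct evaluation based on $\cos((\alpha-3)\pi/2)=-\sin(\alpha\pi/2)$ yields $\operatorname{Re}\widehat b(e^{i\theta})=-\sin(\tfrac{\alpha\pi}{2})\sum_{k\in\mathbb Z}\snm{2k\pi-\theta}^{\alpha-3}$, while $\beta_1-b_1=2\sin(\tfrac{\alpha\pi}{2})\sum_{k\geq 1}(2k\pi)^{\alpha-3}>0$ by \cref{discr:ode-2}. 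Substituting and pairing the $\pm k$ terms, the required inequality $\operatorname{Re}\widehat\beta(e^{i\theta})\leq 0$ reduces to
\[
  \sum_{k\geq 1}\big[(2k\pi-\theta)^{\alpha-3}+(2k\pi+\theta)^{\alpha-3}\big]\geq 2\cos\theta\sum_{k\geq 1}(2k\pi)^{\alpha-3},
\]
which holds because $x\mapsto x^{\alpha-3}$ is convex for $\alpha-3<0$, so each bracket already dominates $2(2k\pi)^{\alpha-3}\geq 2\cos\theta\,(2k\pi)^{\alpha-3}$; retaining the discarded $k=0$ term $\theta^{\alpha-3}$ upgrades this to the strict coercive bound with a constant $C_\alpha$. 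The essential point, and the place where the special value of $\beta_1$ in \cref{discr:ode-2} is used, is exactly this convexity/pairing step: it is what tames the sign-indefinite perturbation, the unmodified kernel's stronger $\theta^{\alpha-3}$ singularity near $\theta=0$ supplying the coercivity that the perturbation cannot destroy.
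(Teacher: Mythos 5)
Your proposal is correct and follows essentially the same route as the paper's proof: reduce the lemma to comparing the $\beta$-quadratic form with the $b$-quadratic form (which equals $\dual{\D_{0+}^{\alpha-1}\mathcal Y',\mathcal Y'}_{(0,t_m)}$ and is coercive by \cref{lem:frac_calcu}), pass to the symbol level via the discrete Laplace transform and Parseval, and prove the pointwise symbol inequality from the series \cref{eq:wt-b-2} using the convexity of $x\mapsto x^{\alpha-3}$ --- exactly the content of \cref{eq:b<beta}, with your $(2k\pi\pm\theta)$ pairing in place of the paper's $(2k\pi-y)$, $(2k\pi+y-2\pi)$ pairing. The one step to make explicit is the last one: the bound $-\operatorname{Re}\widehat\beta(iy)\geqslant\sin(\alpha\pi/2)\snm{y}^{\alpha-3}$ still has to be converted into $Q_\beta\geqslant C_\alpha\nm{\D_{t_m-}^{(\alpha-1)/2}\mathcal Y'}_{L^2(0,t_m)}^2$, which is done by noting $\snm{y}^{\alpha-3}\geqslant C_\alpha\bsnm{\operatorname{Re}\widehat b(iy)}$ on $[-\pi,\pi]$ and routing back through $Q_b$ and \cref{lem:frac_calcu}, as the paper does.
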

\begin{proof}
  In virtue of the proof of \cref{lem:Y_k-stability}, it suffices to prove
  \begin{small}
  \begin{equation}
    \label{eq:ZZ}
    \sum_{k=0}^m Z_k \delta_k \leqslant
    C_\alpha \sum_{k=0}^m \mathcal Z_k \delta_k,
  \end{equation}
  \end{small}
  where
  \begin{small}
  \begin{align*}
    \delta_j &:=
    \begin{cases}
      \mathcal Y_{j+1} - \mathcal Y_j, & 0 \leqslant j < m, \\
      0, & m \leqslant j < \infty,
    \end{cases} \\
    Z_k &:= (b_{k+1} - b_k)\delta_0 +
    \sum_{j=1}^k(b_{k-j+1} - b_{k-j})(\delta_j - \delta_{j-1}), \\
    \mathcal Z_k &:= (\beta_{k+1} - \beta_k)\delta_0 +
    \sum_{j=1}^k(\beta_{k-j+1} - \beta_{k-j})(\delta_j - \delta_{j-1}).
  \end{align*}
  \end{small}

  To this end, we proceed as follows. For  $ z \in \mathbb C_{+} $, let $\widehat \beta(z)$, $ \widehat
  \delta(z) $, $ \widehat Z(z) $ and $ \widehat{\mathcal Z}(z) $ be the discrete
  Laplace transforms of $ (\beta_k)_{k=0}^\infty $, $ (\delta_k)_{k=0}^\infty $, $ (Z_k)_{k=0}^\infty $ and $
  (\mathcal Z_k)_{k=0}^\infty $, respectively. It is easy to verify that $\widehat \beta$, $ \widehat
  \delta $, $ \widehat Z $ and $ \widehat{\mathcal Z} $ are analytic on $ \mathbb
  C_{+} $. A straightforward computation gives that, for $ z \in \mathbb C_{+} $,
  \begin{align*}
    \widehat Z(z) = e^{-z}(e^z-1)^2 \widehat b(z) \widehat \delta(z), \\
    \widehat{\mathcal Z}(z) = e^{-z}(e^z-1)^2 \widehat \beta(z) \widehat \delta(z),
  \end{align*}
  and hence, by \cref{eq:wt-b-2} and the fact
  \[
    \widehat\beta(z) = \widehat b(z) +(\beta_1-b_1)e^{-z},
  \]
  we obtain
  \begin{small}
  \begin{align*}
    & \sup_{0 < x < 1} \int_{-\pi}^\pi
    \snm{\widehat Z(x+iy)}^2 \, \mathrm{d}y < \infty, \\
    & \sup_{0 < x < 1} \int_{-\pi}^\pi
    \snm{\widehat{\mathcal Z}(x+iy)}^2 \, \mathrm{d}y < \infty, \\
    & \lim_{x \to {0+}} \int_{-\pi}^\pi \snm{
      \widehat Z(x+iy) -
      e^{-iy}(e^{iy}-1)^2\widehat b(iy) \widehat\delta(iy)
    }^2 \, \mathrm{d}y = 0, \\
    & \lim_{x \to {0+}} \int_{-\pi}^\pi \snm{
      \widehat{\mathcal Z}(x+iy) -
      e^{-iy}(e^{iy}-1)^2\widehat \beta(iy) \widehat\delta(iy)
    }^2 \, \mathrm{d}y = 0.
  \end{align*}
  \end{small}
  Following the proof of the well-known Paley-Wiener Theorem \cite[Theorem
  1.8.3]{Arendt2011}), we easily conclude that
  \begin{small}
  \begin{align*}
    & \sum_{k=0}^\infty Z_k^2 < \infty, \quad
    \sum_{k=0}^\infty \mathcal Z_k^2 < \infty, \\
    & \sum_{k=0}^\infty Z_k e^{-iy} =
    e^{-iy}(e^{iy}-1)^2 \widehat b(iy) \widehat\delta(iy)
    \quad \text{ in } L^2(-\pi,\pi; \mathrm{d}y), \\
    & \sum_{k=0}^\infty \mathcal Z_k e^{-iy} =
    e^{-iy}(e^{iy}-1)^2 \widehat \beta(iy) \widehat\delta(iy)
    \quad \text{ in } L^2(-\pi,\pi; \mathrm{d}y).
  \end{align*}
  \end{small}
  Therefore, by the famous Parseval's theorem,
  \begin{small}
  \begin{align}
    \sum_{k=0}^m Z_k \delta_k =
    \sum_{k=0}^\infty Z_k \delta_k &= \frac1{2\pi}
    \int_{-\pi}^\pi e^{-iy}(e^{iy}-1)^2 \widehat b(iy)
    \snm{\widehat\delta(iy)}^2 \, \mathrm{d}y \notag \\
    &= \frac1\pi \int_0^\pi \operatorname{Re}\Big(
      e^{-iy}(e^{iy}-1)^2\widehat b(iy)
    \Big) \snm{\widehat\delta(iy)}^2 \, \mathrm{d}y.
    \label{eq:Z_k}
  \end{align}
  \end{small}
  Similarly,
  \begin{small}
  \begin{equation} \label{eq:cal_Z_k}
    \sum_{k=0}^m \mathcal Z_k \delta_k = \frac1\pi \int_0^\pi
    \operatorname{Re}\Big( e^{-iy}(e^{iy}-1)^2\widehat\beta(iy)\Big)
    \snm{\widehat\delta(iy)}^2 \, \mathrm{d}y.
  \end{equation}
  \end{small}

  In addition, a straightforward calculation gives, by \cref{eq:wt-b-2}, that
  \begin{small}
  \begin{align}
    & \operatorname{Re}\Big(
      e^{-iy}(e^{iy}-1)^2\widehat\beta(iy)
    \Big) \notag \\
    ={} &
    2(1\!-\!\cos y)\sin\big(\frac{\alpha\pi}2\big)
    \sum_{k=1}^\infty \Big(
      (2k\pi-y)^{\alpha-3} \!+\! (2k\pi+y-2\pi)^{\alpha-3}
      \!-\!  2\cos y (2k\pi)^{\alpha-3}
    \Big) \notag \\
    >{} & C_\alpha (1-\cos y) \sum_{k=1}^\infty
    \Big( (2k\pi-y)^{\alpha-3} + (2k\pi+y-2\pi)^{\alpha-3} \Big) \notag \\
    >{} & C_\alpha \operatorname{Re} \Big(
      e^{-iy}(e^{iy}-1)^2\widehat b(iy)
    \Big), \label{eq:b<beta}
  \end{align}
  \end{small}
  for all $ y \in [-\pi,\pi] \setminus \{0\} $. Finally, combining
  \cref{eq:Z_k,eq:cal_Z_k,eq:b<beta} yields \cref{eq:ZZ} and thus concludes the proof.
\end{proof}

\subsection{Convergence of \texorpdfstring{\cref{discr:ode-1}}{}}
\subsubsection{Integral representation of \texorpdfstring{$Y_k$}{}}
For any $ z \in \mathbb C_{+} $, let $ \widehat Y(z) $ be the discrete Laplace transform
of $ (Y_k)_{k=0}^\infty $.
In virtue of \cref{lem:Y_k-stability}, $ \widehat Y $ is analytic on $ \mathbb C_{+} $.
Multiplying both sides of \cref{eq:Y} by $ e^{-kz} $ and summing over $ k $ from $0$ to
$\infty$, we obtain
\begin{equation}
  \label{eq:lxy}
  \begin{aligned}
    (\psi(z) + \mu(e^z+1)) \widehat Y(z) & =
    ((e^z-1)^2\widehat b(z) + \mu e^z) y_0 +
    \tau(e^z-1)\widehat b(z) y_1 + \\
    & \quad {} + \tau^{\alpha-1} \sum_{k=0}^\infty
    \int_{t_k}^{t_{k+1}} f(t) \, \mathrm{d}t e^{-kz}, \quad \forall  z \in \mathbb C_{+},
  \end{aligned}
\end{equation}
where
\begin{align}
  \psi(z) &:= e^{-z}(e^z-1)^3 \widehat b(z)
  \label{eq:psi-1}.
\end{align}
By the properties of the function $ \widehat b $ in the previous subsection, $ \psi $ has
an analytic continuation as follows:
\begin{equation}
  \label{eq:psi}
  \psi(z) = e^{-z}(e^z-1)^3 \sum_{k=-\infty}^\infty
  (z + 2k\pi i)^{\alpha-3}
\end{equation}
for all $ z \in \mathbb C \setminus (-\infty,0] $ satisfying $ -2\pi <
\operatorname{Im} z < 2\pi $. Moreover,
\begin{equation}
  \label{eq:psi-conj}
  \overline{\psi(z)} = \psi(\overline z) \text{ for all }
  z \in \mathbb C \setminus (-\infty,0] \text{ with }
  -2\pi < \operatorname{Im} z < 2\pi,
\end{equation}
\begin{equation}
  \psi(z) - e^{-z}(e^z-1)^3 z^{\alpha-3} \text{ is analytic on }
  \{w \in \mathbb C: \snm{\operatorname{Im} w} < 2\pi \},
\end{equation}
and
\begin{equation}
  \label{eq:limt_0}
  \lim_{r \to {0+}} \frac{\psi(re^{i\theta})}{
    r^\alpha(\cos(\alpha\theta)+ i \sin(\alpha\theta))
  } = 1 \quad \text{uniformly for all $ -\pi < \theta < \pi $.}
\end{equation}

In the rest of \cref{sec:ode}, we assume that $\mu\leqslant \mu_0$, where $\mu_0$ is a given positive constant.
\begin{rem}
Let $\lambda$ be any eigen value of discrete Laplace operator $-\Delta_h$, then $\mu\leqslant \mu_0$ implies that $\tau^{\alpha}/h^2$ is bounded. The L1 scheme in \cref{discr:ode-1} reduces to the second order central difference scheme when $\alpha=2$, and this scheme require that $\tau/h$ is bounded (stability), which is consistent with the condition that $\mu \leqslant \mu_0$.
\end{rem}

\begin{lem}
  \label{lem:psi-esti}
  There exists $ \frac{\pi}{2} < \theta_{\alpha,\mu_0} \leqslant
  \frac{\alpha+2}{4\alpha}\pi $ depending only on $ \alpha $ and $ \mu_0 $ such that
  \begin{equation} \label{eq:psi-esti}
    \begin{aligned}
      \psi(z) + \mu(1+e^z) \neq 0 \quad \text{ for all }
      0 < \mu \leqslant \mu_0 \text{ and } \\
      z \in \{
        w \in \mathbb C :\,
        0 < \snm{\operatorname{Im} w} \leqslant \pi,
        \frac{\pi}{2} \leqslant \snm{\operatorname{Arg} w}
        \leqslant \theta_{\alpha,\mu_0}\}.
      \end{aligned}
	\end{equation}
\end{lem}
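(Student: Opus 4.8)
The plan is to show that the zeros of the symbol $\psi(z)+\mu(1+e^z)$ are governed by a single function that is independent of $\mu$, and then to locate those zeros by a phase argument. First I would use the conjugation symmetry \cref{eq:psi-conj} together with $\overline{e^z}=e^{\bar z}$ to restrict attention to $\operatorname{Im} z>0$; the region is then bounded, since $\operatorname{Im} z\le\pi$ and $\operatorname{Arg} z\le\theta_{\alpha,\mu_0}<\pi$ force $\snm{z}\le\pi/\sin\theta_{\alpha,\mu_0}$. The point $z=i\pi$ is the \emph{only} point of the region at which $1+e^z=0$, and there $\psi(i\pi)+\mu(1+e^{i\pi})=\psi(i\pi)=8\widehat b(i\pi)$, which by \cref{eq:wt-b-2} is a nonzero (negative) real number; hence $F:=\psi+\mu(1+e^z)$ does not vanish at $i\pi$, and by continuity it does not vanish on a neighbourhood of $i\pi$ whose radius I may take to depend on $\mu_0$.

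Next I would use the elementary factorizations $e^{-z}(e^z-1)^3=8e^{z/2}\sinh^3(z/2)$ and $1+e^z=2e^{z/2}\cosh(z/2)$ to write, away from $z=i\pi$,
\[ \psi(z)+\mu(1+e^z)=2e^{z/2}\cosh(z/2)\,\big(g(z)+\mu\big),\qquad g(z):=4\frac{\sinh^3(z/2)}{\cosh(z/2)}\,\widehat b(z). \]
Since $2e^{z/2}\cosh(z/2)\neq0$ off $z=i\pi$, the non-vanishing of $F$ for every $\mu\in(0,\mu_0]$ is \emph{equivalent} to $g(z)\notin[-\mu_0,0)$, i.e.\ to the assertion that $g(z)$ is never a negative real number of modulus at most $\mu_0$. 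The decisive observation is that on the imaginary axis this is clean: for $z=iy$ with $0<y<\pi$ one computes $g(iy)=-4i\tan(y/2)\sin^2(y/2)\,\widehat b(iy)$, and by \cref{eq:wt-b-2} one has $\operatorname{Re}\widehat b(iy)=(P+Q)\cos\big((3-\alpha)\pi/2\big)<0$ with $P,Q>0$; hence $\operatorname{Im} g(iy)=-4\tan(y/2)\sin^2(y/2)\operatorname{Re}\widehat b(iy)>0$, so $g(iy)$ lies strictly in the open upper half-plane and cannot be real. This already settles the boundary case $\operatorname{Arg} z=\pi/2$ for every $\mu_0$.

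It then remains to extend this to the two-dimensional part of the region, $\pi/2<\operatorname{Arg} z\le\theta_{\alpha,\mu_0}$, and here I would split according to $\snm z$. For $\snm z$ small I would invoke \cref{eq:limt_0}, which gives $g(z)=\tfrac12 z^\alpha(1+o(1))$, so $\operatorname{Arg} g(z)\to\alpha\operatorname{Arg} z$; choosing $\theta_{\alpha,\mu_0}$ strictly below $\pi/\alpha$ — for which the stated bound $\theta_{\alpha,\mu_0}\le\frac{\alpha+2}{4\alpha}\pi$ (the midpoint of $\pi/2$ and $\pi/\alpha$, which is $<\pi/\alpha$ precisely because $\alpha<2$) is convenient — keeps $\alpha\operatorname{Arg} z$ bounded away from $\pi$, so $g(z)$ stays in the open upper half-plane and off $[-\mu_0,0)$ for all small $z$. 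For $\snm z$ bounded below, $z$ ranges over a compact set, on which I would control $\widehat b(z)$ through the absolutely convergent series \cref{eq:wt-b-2} (or the Hankel representation \cref{eq:512}): the aim is to show that either $\operatorname{Im} g(z)>0$ persists, or, in the part of the region approaching $i\pi$ where this positivity may degrade, that $\snm{g(z)}$ is so large (because $\widehat b(i\pi)\neq0$ while $\cosh(z/2)\to0$) that $g(z)$ overshoots the bounded interval $[-\mu_0,0)$. Balancing these two mechanisms is exactly what forces $\theta_{\alpha,\mu_0}$ to depend on $\mu_0$ and to be taken closer to $\pi/2$ as $\mu_0$ grows.

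The main obstacle is precisely this last off-axis step: away from the imaginary axis the convenient closed form $\operatorname{Re}\widehat b(iy)<0$ is no longer available, so controlling the sign of $\operatorname{Im} g(z)$ — equivalently the argument of $\widehat b(z)$ combined with that of $\sinh^3(z/2)/\cosh(z/2)$ — up to $\operatorname{Im} z=\pi$ requires quantitative estimates of the series \cref{eq:wt-b-2} that are uniform in the thin angular range $[\pi/2,\theta_{\alpha,\mu_0}]$, together with a careful matching near $z=i\pi$ of the sub-region where $\operatorname{Im} g$ might fail to be positive against the sub-region where $\snm{g}>\mu_0$. Everything else — the symmetry reduction, the factorization through $g$, the imaginary-axis computation, and the small-$\snm z$ asymptotics — is routine once this quantitative control of $\widehat b$ is in hand.
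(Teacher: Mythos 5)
Your skeleton is the paper's proof: the conjugation reduction, the division by $1+e^z$ (your $g(z)=(1+e^z)^{-1}\psi(z)$ is exactly the quantity the paper tracks), the imaginary-axis positivity $\operatorname{Im} g(iy)>0$ via $\operatorname{Re}\widehat b(iy)=A(y)<0$ (this is \cref{eq:shit-2}), the observation $\psi(i\pi)=8A(\pi)<0=\mu(1+e^{i\pi})$, and the small-$\snm{z}$ treatment via \cref{eq:limt_0} with $\theta_{\alpha,\mu_0}\leqslant\frac{\alpha+2}{4\alpha}\pi<\pi/\alpha$ are all correct and are precisely the ingredients used in the paper.

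The gap is the step you explicitly defer as the ``main obstacle'': you never extend the axis information into the two-dimensional wedge, and you misdiagnose what that extension requires. No quantitative, uniform-in-angle estimate of the series \cref{eq:wt-b-2} is needed, because $\theta_{\alpha,\mu_0}$ is the \emph{last} object chosen, not the first: one does not fix an angular range and then estimate on it, one first establishes strict inequalities on the compact imaginary-axis pieces and then shrinks the wedge to fit inside the neighbourhoods those inequalities survive in. Concretely, the paper argues: (i) on the compact segment $\{iy:\ r_\alpha\leqslant y\leqslant r^2_{\alpha,\mu_0}\}$ the continuous function $\operatorname{Im}\big((1+e^z)^{-1}\psi(z)\big)$ is strictly positive, hence bounded below by a positive constant, hence still positive on a thin rectangle $\{-r^3\leqslant\operatorname{Re}z\leqslant 0,\ r_\alpha\leqslant\operatorname{Im}z\leqslant r^2\}$ by continuity alone; (ii) near $i\pi$, since $\psi(i\pi)=8A(\pi)\neq 0$ while $1+e^{i\pi}=0$, continuity gives a rectangle $\{-r^1\leqslant\operatorname{Re}z\leqslant 0,\ r^2\leqslant\operatorname{Im}z\leqslant\pi\}$ on which $\snm{\psi(z)}>\mu_0\snm{1+e^z}$, so $\psi(z)+\mu(1+e^z)\neq 0$ for every $0<\mu\leqslant\mu_0$ --- this is a rigorous version of your ``$\snm{g}$ overshoots $[-\mu_0,0)$'' mechanism, again with no series estimate; (iii) setting $\theta_{\alpha,\mu_0}:=\pi/2+\arctan(r^3/\pi)$ with $r^3\leqslant r^1$ forces every $z$ in the wedge with $0<\operatorname{Im}z\leqslant\pi$ to satisfy $\snm{\operatorname{Re}z}\leqslant r^3$, i.e.\ to lie in one of the rectangles already controlled or in the small-$\snm{z}$ region handled by \cref{eq:limt_0}. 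With this soft compactness argument supplied your outline closes; as written, the decisive gluing step is only a programme, so the proof is incomplete.
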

\begin{proof}
  By \cref{eq:limt_0}, there exists $ 0 < r_\alpha < \pi $, depending only on $
  \alpha $, such that $ \operatorname{Im} \left((1+e^z)^{-1} \psi(z)\right) > 0 $ and
  hence
  \begin{small}
  \begin{equation}
    \label{eq:426}
    \begin{aligned}
      \psi(z) \!+\! \mu (1+e^z)
      \neq 0 \quad \text{ for all }\, 0 < \mu \leqslant \mu_0 \text{ and } \\
      z \in \Big\{
        w \in \mathbb C:
        \frac{\pi}{2} \leqslant \operatorname{Arg} w \leqslant
        \frac{\alpha+2}{4\alpha}\pi, 0 < \operatorname{Im} w \leqslant r_\alpha
      \Big\}.
    \end{aligned}
  \end{equation}
  \end{small}
  From \cref{eq:psi-conj,eq:426}, it remains therefore to show that there exists
  $\frac{\pi}{2} < \theta_{\alpha,\mu_0} \leqslant \frac{\alpha+2}{4\alpha}\pi $ such
  that
  \begin{small}
  \begin{equation}
    \label{eq:427}
    \begin{aligned}
      \psi(z) \!+\! \mu (1+e^z)
      \neq 0 \quad \text{ for all }\, 0 < \mu \leqslant \mu_0
      \text{ and } \\
      z \in \Big\{
        w \in \mathbb C:
        \frac{\pi}{2} \leqslant \operatorname{Arg} w \leqslant
        \theta_{\alpha,\mu_0}, r_{\alpha}< \operatorname{Im} w \leqslant \pi
      \Big\}.
    \end{aligned}
  \end{equation}
  \end{small}
  To this end, we proceed as follows. For $ 0 < y \leqslant \pi $, by \cref{eq:psi} we have
  \begin{small}
  \begin{align}
    \psi(iy) & = e^{-iy}(e^{iy}-1)^3 \sum_{k=-\infty}^\infty
    (iy+2k\pi i)^{\alpha-3} \notag \\
    & = e^{-iy}(e^{iy}-1)^3 \Big(
      \sum_{k=-\infty}^{-1}  (-2k\pi - y)^{\alpha-3}
      (-i)^{\alpha-3} + \sum_{k=0}^\infty (2k\pi+y)^{\alpha-3} i^{\alpha-3}
    \Big) \notag \\
    &= e^{-iy}(e^{iy}-1)^3 \Big(
      \sum_{k=1}^\infty (2k\pi - y)^{\alpha-3}
      e^{i(3-\alpha)\pi/2} + \sum_{k=0}^\infty (2k\pi+y)^{\alpha-3}
      e^{-i(3-\alpha)\pi/2}
    \Big) \notag \\
    &=
    e^{-iy}(e^{iy}-1)^3 \Big(
      -\sum_{k=1}^\infty (2k\pi - y)^{\alpha-3}
      e^{i(1-\alpha)\pi/2} - \sum_{k=0}^\infty (2k\pi+y)^{\alpha-3}
      e^{-i(1-\alpha)\pi/2}
    \Big) \notag \\
    &= e^{-iy}(e^{iy}-1)^3(A(y) + iB(y)), \label{eq:psi-A-B}
  \end{align}
  \end{small}
  where
  \begin{small}
  \begin{align*}
    A(y) &:= - \cos((\alpha-1)\pi/2)
    \sum_{k=0}^\infty (2k\pi + 2\pi - y)^{\alpha-3} +
    (2k\pi + y)^{\alpha-3}, \\
    B(y) &:= \sin((\alpha-1)\pi/2) \sum_{k=0}^\infty
    (2k\pi+2\pi-y)^{\alpha-3} - (2k\pi+y)^{\alpha-3}.
  \end{align*}
  Moreover,
  \begin{small}
  \begin{equation}
    \label{eq:shit-2}
    \operatorname{Im}\left( (1+e^{iy})^{-1} \psi(iy) \right)=
    4A(y) \frac{(\cos y - 1)\sin y}{\snm{e^{iy} + e^{2iy}}^2} > 0,
    \quad \forall 0 < y < \pi.
  \end{equation}
  \end{small}
  \end{small}

  Inserting $y=\pi$ into \cref{eq:psi-A-B} yields $$ \psi(\pi i) = 8A(\pi) < 0 = \mu (1+e^{\pi i}),$$ so that by
  the continuity of $ \psi$,
  %
  there exists $ 0 < r_{\alpha,\mu_0}^1 \leqslant
  r_\alpha \tan((2-\alpha)/(4\alpha)\pi) $ and $ 0 < r_{\alpha,\mu_0}^2 < \pi $,
  depending only on $ \alpha $ and $ \mu_0 $, such that
  \begin{small}
  \begin{equation}
    \label{eq:732-2}
    \begin{aligned}
      \psi(z) + \mu(1+e^z) \neq 0 \quad
      \text{ for all }\, 0 < \mu \leqslant \mu_0 \text{ and } \\
      z \in \{
        w \in \mathbb C: -r_{\alpha,\mu_0}^1 \leqslant
        \operatorname{Re} w \leqslant 0, r_{\alpha,\mu_0}^2
        \leqslant \operatorname{Im} w \leqslant \pi
      \}.
    \end{aligned}
  \end{equation}
  \end{small}
  For the case of $ r_\alpha \leqslant \operatorname{Im} w \leqslant r_{\alpha,\mu_0}^2$, by $\eqref{eq:shit-2}$ and the continuity of $  \psi $,
  %
  it follows that there exists $ 0 < r_{\alpha,\mu_0}^3 \leqslant r_{\alpha,\mu_0}^1
  $, depending only on $ \alpha $ and $ \mu_0 $, such that $ \operatorname{Im}\left(
    (1+e^z)^{-1} \psi(z) \right)> 0 $ and hence
  \begin{small}
  \begin{equation}
    \label{eq:731-1}
    \begin{aligned}
      \psi(z) + \mu (1+e^z) \neq 0 \quad \text{ for all }
      0 < \mu \leqslant \mu_0 \text{ and } \\
      z \in \{
        w \in \mathbb C:
        -r_{\alpha,\mu_0}^3 \leqslant \operatorname{Re} w  \leqslant 0,\,
        r_\alpha \leqslant \operatorname{Im} w \leqslant r_{\alpha,\mu_0}^2
      \}.
    \end{aligned}
  \end{equation}
  \end{small}
  Finally, letting $ \theta_{\alpha,\mu_0} := \pi/2 + \arctan(r_{\alpha,\mu_0}^3/\pi)
  $ yields \cref{eq:427}, by \cref{eq:731-1,eq:732-2}. This completes the proof.
\end{proof}
\begin{rem} \label{rem:phi}
The $r^{1}_{\alpha,\mu_0}$ in the above proof will approximate 0, when $\mu_0 \rightarrow  \infty$. Hence, $\theta_{\alpha,\mu_0}  \rightarrow (\pi/2)+$ as $\mu_0 \rightarrow  \infty$.
\end{rem}

\begin{lem}
  \label{lem:neq0}
  For each $ z \in \mathbb C_{+} $ and $ \mu > 0 $,
  \begin{equation}
    \psi(z) + \mu(e^z+1) \neq 0.
  \end{equation}
\end{lem}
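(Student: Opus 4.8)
The plan is to reduce everything to a half-strip and then show that $(1+e^z)^{-1}\psi(z)$ never takes a negative real value. Since $\widehat b(z)=\sum_{k\geqslant0}b_ke^{-kz}$, both $\psi$ and $e^z$ are $2\pi i$-periodic, and \cref{eq:psi-conj} gives $\overline{\psi(z)}=\psi(\overline z)$; combining the period shift $y\mapsto y+2\pi$ with conjugation $y\mapsto -y$ reduces the claim to the half-strip $\mathcal S:=\{x+iy:\ x>0,\ 0\leqslant y\leqslant\pi\}$. On $\mathcal S$ one has $\snm{e^z}=e^x>1$, hence $1+e^z\neq0$, and the factorization $\psi(z)+\mu(1+e^z)=(1+e^z)\bigl(F(z)+\mu\bigr)$ with $F(z):=\psi(z)/(1+e^z)$ shows it is enough to prove that $F(z)\neq-\mu$ for every $\mu>0$, i.e. that $F$ avoids $(-\infty,0)$ on $\mathcal S$. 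A convenient closed form, obtained from $\psi(z)=e^{-z}(e^z-1)^3\widehat b(z)$ via $e^z-1=2e^{z/2}\sinh(z/2)$ and $1+e^z=2e^{z/2}\cosh(z/2)$, is $F(z)=\dfrac{2(\cosh z-1)^2}{\sinh z}\,\widehat b(z)$.

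Next I would settle the two horizontal edges, where $F$ is real. For $y=0$, $\psi(x)=e^{-x}(e^x-1)^3\widehat b(x)>0$ and $1+e^x>0$, so $F(x)>0$. For $y=\pi$, a direct substitution gives $\psi(x+i\pi)=e^{-x}(e^x+1)^3\,\widehat b(x+i\pi)$ with positive prefactor while $1+e^{x+i\pi}=1-e^x<0$, so $F(x+i\pi)>0$ as soon as $\widehat b(x+i\pi)<0$. This sign is the crux of the edge analysis, and I would establish it with the subordination identity $k^{2-\alpha}=\frac{2-\alpha}{\Gamma(\alpha-1)}\int_0^\infty(1-e^{-kt})t^{\alpha-3}\,\mathrm{d}t$: setting $q=e^{-x}\in(0,1)$ and summing the resulting alternating geometric series gives
\[
  \Gamma(3-\alpha)\,\widehat b(x+i\pi)
  =\sum_{k=1}^\infty(-1)^kk^{2-\alpha}q^k
  =-\frac{2-\alpha}{\Gamma(\alpha-1)}\int_0^\infty
   \frac{q(1-e^{-t})}{(1+q)(1+qe^{-t})}\,t^{\alpha-3}\,\mathrm{d}t<0,
\]
so $F>0$ on both horizontal edges.

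For the interior $\mathcal S^\circ=\{x>0,\ 0<y<\pi\}$ I would argue by harmonicity. There $F$ is analytic, so $\operatorname{Im}F$ is harmonic, and I would collect its boundary values: $\operatorname{Im}F=0$ on the horizontal edges $y=0,\pi$ (by the previous paragraph); $\operatorname{Im}F(iy)>0$ for $0<y<\pi$ on the vertical edge $x=0$, which is precisely \cref{eq:shit-2} from the proof of \cref{lem:psi-esti}; and $\operatorname{Im}F\to0$ as $x\to\infty$, since the closed form together with $\widehat b(z)\sim b_1e^{-z}$ yields $F(z)\to b_1=1/\Gamma(3-\alpha)\in\mathbb R$. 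Thus $\operatorname{Im}F$ is nonnegative on $\partial\mathcal S^\circ$, bounded at infinity, and not identically zero; the minimum principle (with a Phragm\'en--Lindel\"of truncation in $x$) forces $\operatorname{Im}F\geqslant0$ on $\mathcal S^\circ$, and the strong minimum principle upgrades this to $\operatorname{Im}F>0$ there, because a nonconstant harmonic function cannot attain its boundary infimum $0$ at an interior point. Hence $F(z)\notin\mathbb R$ in $\mathcal S^\circ$, so $F(z)\neq-\mu$; combined with the edges this gives $F\neq-\mu$ on all of $\mathcal S$, and therefore $\psi(z)+\mu(1+e^z)\neq0$ on $\mathbb C_{+}$.

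The main obstacle is this minimum-principle step on the unbounded half-strip, which has two delicate features. First, the behaviour at infinity must be controlled; the asymptotics $F\to b_1$ let me truncate at $x=N$, apply the principle on the bounded rectangle, and let $N\to\infty$. Second, and more seriously, at the corner $z=i\pi$ the denominator $1+e^z$ vanishes while $\psi(i\pi)\neq0$, so $F$ blows up there; since $\widehat b(x+i\pi)<0$ keeps $F$ real and large-positive (not negative) along the approach, $\operatorname{Im}F$ still extends by $0$ into that corner, and I would excise a shrinking neighbourhood of $i\pi$, run the argument on the excised region, and check that $\operatorname{Im}F$ stays nonnegative on the small arc introduced. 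Verifying the edge sign $\widehat b(x+i\pi)<0$ and making the corner argument rigorous are where the real work lies; the rest is bookkeeping.
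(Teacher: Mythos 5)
Your argument is essentially correct, but it is a genuinely different and much heavier route than the paper's. The paper disposes of this lemma in a few lines by contradiction: if $\psi(z)+\mu(e^z+1)=0$ at some $z\in\mathbb C_+$, then solving for $\widehat b(z)$ gives $(e^z-1)^2\widehat b(z)+\mu e^z=-2\mu e^z(e^z-1)^{-1}\neq 0$; on the other hand, specializing \cref{eq:lxy} to $y_0=1$, $y_1=0$, $f\equiv 0$ and using that $\widehat Y$ is analytic on $\mathbb C_+$ (a consequence of the stability bound in \cref{lem:Y_k-stability}, which guarantees the discrete Laplace transform converges there) forces the right-hand side $(e^z-1)^2\widehat b(z)+\mu e^z$ to vanish at that $z$ --- a contradiction. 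So the paper's proof buys the result almost for free from work already done, whereas yours is self-contained complex analysis: reduction to the half-strip by periodicity and $\overline{\psi(z)}=\psi(\overline z)$, positivity of $F=\psi/(1+e^z)$ on the two horizontal edges (your subordination identity correctly yields $\widehat b(x+i\pi)<0$), positivity of $\operatorname{Im}F$ on the imaginary axis via \cref{eq:shit-2}, and a Phragm\'en--Lindel\"of/minimum-principle argument in the interior. I checked the delicate points and they all go through, with one phrasing to repair: $\operatorname{Im}F$ does \emph{not} ``extend by $0$'' into the corner $i\pi$ --- $F$ has a simple pole there since $\psi(i\pi)=8A(\pi)\neq 0$ while $1+e^{i\pi}=0$ --- but because the local expansion is $F(z)\approx -\psi(i\pi)/(z-i\pi)$ with $-\psi(i\pi)>0$, one gets $\operatorname{Im}F\geqslant 0$ on the excised quarter-arc, so your excision fix is exactly what is needed and the minimum principle still applies on the truncated region. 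The trade-off: your proof yields strictly more (it shows $F$ omits the whole ray $(-\infty,0)$, uniformly in $\mu$, and is independent of the ODE machinery), at the cost of roughly a page of analysis; the paper's proof is shorter but leans on \cref{lem:Y_k-stability} and the algebra of \cref{eq:lxy}.
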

\begin{proof}
  Assume that $ z \in \mathbb C_{+} $ satisfies that
  \begin{equation}
    \label{eq:contradiction}
    \psi(z) + \mu(e^z + 1) = 0.
  \end{equation}
  It follows that
  \[
    \widehat b(z) = -\mu e^z (e^z+1) (e^z-1)^{-3},
  \]
  and hence
  \[
    (e^z-1)^2 \widehat b(z) + \mu e^z =
    -2\mu e^z(e^z-1)^{-1}.
  \]
  In the case that $ y_0 = 1 $, $ y_1 = 0 $ and $ f \equiv 0 $, from
  \cref{eq:lxy,eq:contradiction} we obtain
  \[
    (e^z-1)^2 \widehat b(z) + \mu e^z = 0.
  \]
  Since the above two equations are contradictory, this proves the lemma.
\end{proof}

\begin{rem}
The above two lemmas indicate that $\psi(z)+\mu(e^z+1)\ne0$ in some places. Hence, by  \cref{eq:lxy}, $\widehat Y(z)$ will not blow up in these places. Then it  is reasonable to give the integral representation of the numerical solution $Y$.
\end{rem}

For the sake of simplicity, in the rest of this subsection (i.e., Subsection 3.2) we use
the following conventions: $ \mu_0 $ is a positive constant and $ \mu \leqslant \mu_0 $; $
\theta_{\alpha,\mu_0} $ defined in \cref{lem:psi-esti} is abbreviated to $ \theta $.
Define
\begin{align*}
  \Upsilon &:= (\infty,0]e^{-i\theta} \cup [0,\infty)e^{i\theta}, \\
  \Upsilon_1 &:= \{z \in \Upsilon:\ \snm{\operatorname{Im} z} \leqslant \pi\},
\end{align*}
where $ \Upsilon $ is oriented so that $ \operatorname{Im} z $ increases along $ \Upsilon
$ and $\Upsilon_1$ inherit the orientation of $ \Upsilon $. In addition, if the integral
over $ \Upsilon / \Upsilon_1 $ is divergent, caused by the singularity of the underlying
integrand near the origin, then $ \Upsilon / \Upsilon_1 $ should be deformed so that the
origin lies at its left side; for example,
\[
  \Upsilon := (\infty,\epsilon]e^{-i\theta}
  \cup \{\epsilon e^{i\varphi}: -\theta \leqslant \varphi \leqslant \theta\}
  \cup [\epsilon,\infty) e^{i\theta},
\]
where $ \epsilon $ is an arbitrary positive constant.

\begin{lem}[\cite{Jin2016}]
  For any $ t > 0 $,
  \begin{equation}
    \label{eq:y}
    \begin{aligned}
      y(t) & = \frac1{2\pi i} \int_\Upsilon
      e^{(t/\tau)z} \frac{ y_0 z^{\alpha-1} + \tau y_1 z^{\alpha-2} }
      {z^\alpha+2\mu} \, \mathrm{d}z \\
      & \qquad {} + \int_0^t E(t-s)f(s) \, \mathrm{d}s,
    \end{aligned}
  \end{equation}
  where
  \begin{equation}
    \label{eq:E}
    E(t) := \frac{\tau^{\alpha-1}}{2\pi i} \int_\Upsilon
    e^{(t/\tau)z} (z^\alpha+2\mu)^{-1} \, \mathrm{d}z.
  \end{equation}
\end{lem}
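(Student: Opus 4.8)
The plan is to obtain \cref{eq:y} by the continuous Laplace transform, solving the resulting algebraic equation and then inverting along a Bromwich line which is finally deformed onto $\Upsilon$. Denote by $\widetilde g(p)$ the Laplace transform of a function $g$. Since $1 < \alpha < 2$ we have $\D_{0+}^{\alpha-1} = \D\,\D_{0+}^{\alpha-2} = \D\,\I^{2-\alpha}$ with $2-\alpha \in (0,1)$, so $\mathcal L[\D_{0+}^{\alpha-1} g](p) = p^{\alpha-1}\widetilde g(p) - [\I^{2-\alpha}g]_{t=0^+}$, and the boundary term vanishes because $\I^{2-\alpha}$ of an $ L^1 $ function tends to $0$ at the origin (the kernel exponent $1-\alpha > -1$). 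Taking $g = y'-y_1$ and using $\mathcal L[y'](p) = p\widetilde y(p) - y_0$ gives $\mathcal L[\D_{0+}^{\alpha-1}(y'-y_1)](p) = p^\alpha\widetilde y(p) - p^{\alpha-1}y_0 - p^{\alpha-2}y_1$. Hence the transform of \cref{eq:ode-y} reads $p^\alpha\widetilde y + \lambda\widetilde y - p^{\alpha-1}y_0 - p^{\alpha-2}y_1 = \widetilde f$, which yields
\[
  \widetilde y(p) = \frac{p^{\alpha-1}y_0 + p^{\alpha-2}y_1 + \widetilde f(p)}{p^\alpha + \lambda}.
\]

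Next I would invert term by term. The Bromwich integral splits into a homogeneous part coming from $y_0,y_1$ and an inhomogeneous part $\frac{1}{2\pi i}\int_{\mathrm{Br}} e^{pt}\widetilde f(p)(p^\alpha+\lambda)^{-1}\,\mathrm dp$. For the latter the convolution theorem gives $\int_0^t E_0(t-s)f(s)\,\mathrm ds$ with $E_0 := \mathcal L^{-1}[(p^\alpha+\lambda)^{-1}]$, so it only remains to recast $E_0$ and the homogeneous integrand in the stated form. This is achieved by deforming the Bromwich line onto the Hankel contour and substituting $p = z/\tau$: since $2\mu = \lambda\tau^\alpha$ one has $p^\alpha+\lambda = \tau^{-\alpha}(z^\alpha+2\mu)$, $p^{\alpha-1} = \tau^{1-\alpha}z^{\alpha-1}$, $p^{\alpha-2} = \tau^{2-\alpha}z^{\alpha-2}$ and $\mathrm dp = \tau^{-1}\,\mathrm dz$. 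Bookkeeping the powers of $\tau$ turns the homogeneous integrand into $e^{(t/\tau)z}(y_0 z^{\alpha-1}+\tau y_1 z^{\alpha-2})(z^\alpha+2\mu)^{-1}$ and $E_0$ into exactly $E(t)$ of \cref{eq:E}, reproducing \cref{eq:y}.

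The main obstacle is justifying the deformation of the Bromwich contour onto $\Upsilon$, and here two points must be verified. First, the integrands are multivalued through $z^{\alpha-1}, z^{\alpha-2}, z^\alpha$; fixing the branch cut along $(-\infty,0]$ (the principal branch already used for $w^{\alpha-3}$), each integrand is analytic on $\mathbb C\setminus(-\infty,0]$ away from the zeros of $z^\alpha+2\mu$. Those zeros lie at $\operatorname{Arg} z = \pm\pi/\alpha$, and because $\theta = \theta_{\alpha,\mu_0} \leqslant \frac{\alpha+2}{4\alpha}\pi < \frac{\pi}{\alpha}$ (the last inequality using $\alpha < 2$), they sit strictly to the left of $\Upsilon$, while on $\Upsilon$ itself $\operatorname{Arg}(z^\alpha)=\pm\alpha\theta\in(-\pi,\pi)$ so $z^\alpha+2\mu\neq0$. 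Thus the integrand is analytic on and to the right of $\Upsilon$, which is exactly the region swept during the deformation. Second, the joining arcs at infinity must contribute nothing: along the rays of $\Upsilon$ one has $\operatorname{Re} z < 0$ for $\snm{z}$ large (as $\theta>\pi/2$), so $e^{(t/\tau)z}$ decays exponentially for $t>0$ while the rational factors decay like $z^{-1}$ and $z^{-2}$, giving a Jordan-type estimate that also secures convergence of the integral over $\Upsilon$.

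Finally, near the origin the homogeneous integrand behaves like $z^{\alpha-1}$ and $z^{\alpha-2}$, whose exponents exceed $-1$, hence it is integrable and no indentation is required; should a term fail to be integrable at $0$, one invokes the small-circle deformation of $\Upsilon$ introduced just before the statement, which does not change the value by analyticity. As a consistency check I would note that the representation is independent of the admissible angle $\theta$ and of the abscissa of the Bromwich line, and that transforming \cref{eq:y} back recovers both $y(0)=y_0$ and \cref{eq:ode-y}.
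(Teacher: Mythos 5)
Your derivation is correct: the paper offers no proof of this lemma but simply cites \cite{Jin2016}, and the Laplace-transform route you take — transforming \cref{eq:ode-y} to get $\widetilde y(p)=(p^{\alpha-1}y_0+p^{\alpha-2}y_1+\widetilde f(p))/(p^\alpha+\lambda)$, inverting along a Bromwich line, deforming onto the sector contour, and rescaling $p=z/\tau$ so that $p^\alpha+\lambda=\tau^{-\alpha}(z^\alpha+2\mu)$ — is exactly the standard argument of that reference. Your checks that the zeros of $z^\alpha+2\mu$ at $\operatorname{Arg}z=\pm\pi/\alpha$ lie to the left of $\Upsilon$ (using $\theta\leqslant\frac{\alpha+2}{4\alpha}\pi<\pi/\alpha$) and that the integrand is integrable at the origin and decays at infinity are the right justifications for the contour deformation, so nothing essential is missing.
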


\begin{lem}
  For each $ k \in \mathbb N_{>0}$,
  \begin{small}
  \begin{equation}
    \label{eq:Y_k}
    \begin{aligned}
      Y_k & = \frac1{2\pi i} \int_{\Upsilon_1}
      e^{kz} \frac{
        \big(
          (e^z-1)^2\widehat b(z) - \psi(z)/2 + \mu(e^z-1)/2
        \big) y_0 + \tau(e^z-1)\widehat b(z) y_1
      }{\psi(z) + \mu(e^z+1)} \, \mathrm{d}z \\
      & {} \qquad + \int_0^{t_k} \widetilde E(t_k-t) f(t) \, \mathrm{d}t,
    \end{aligned}
  \end{equation}
  \end{small}
  where
  \begin{equation}
    \label{eq:wtE}
    \widetilde E(t) := \tau^{\alpha-1} E_{\lceil t/\tau \rceil},
    \quad t > 0,
  \end{equation}
  with $ \lceil \cdot \rceil $ being the ceiling function and
  \begin{equation}
    \label{eq:E_j}
    E_j := \frac1{2\pi i} \int_{\Upsilon_1}
    e^{jz}(\psi(z) + \mu(e^z+1))^{-1} \, \mathrm{d}z,
    \quad for \ j \in \mathbb Z.
  \end{equation}
\end{lem}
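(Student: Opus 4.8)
The plan is to invert the discrete Laplace transform \cref{eq:lxy} on a vertical line and then deform the contour onto $\Upsilon_1$. Write $D(z):=\psi(z)+\mu(e^z+1)$, so that \cref{eq:lxy} reads $\widehat Y(z)=N(z)/D(z)$ with $N$ the displayed right-hand side. By \cref{lem:Y_k-stability} the $Y_k$ grow at most polynomially, so $\sum_k Y_k e^{-kz}$ converges and $\widehat Y$ is analytic on $\mathbb C_{+}$; the elementary Fourier-coefficient computation gives, for any $\sigma>0$ and each integer $k\geqslant1$, $Y_k=\frac1{2\pi i}\int_{\sigma-i\pi}^{\sigma+i\pi}e^{kz}\widehat Y(z)\,\mathrm{d}z$. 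First I would use the identity $(e^z-1)^2\widehat b(z)+\mu e^z=\big((e^z-1)^2\widehat b(z)-\psi(z)/2+\mu(e^z-1)/2\big)+\tfrac12 D(z)$, which is immediate from the definition $D=\psi+\mu(e^z+1)$; dividing by $D$ turns the $y_0$-part of $\widehat Y$ into the numerator appearing in \cref{eq:Y_k} plus the constant $y_0/2$. Since the vertical segment has endpoints at $\operatorname{Im}z=\pm\pi$, the leftover term equals $\frac{y_0 e^{k\sigma}}{4\pi i k}\big(e^{ik\pi}-e^{-ik\pi}\big)=0$ because $\sin(k\pi)=0$ for integer $k$.

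Next I would deform the vertical segment onto $\Upsilon_1$, which shares its endpoints $\operatorname{Im}z=\pm\pi$. I close the contour by the two horizontal segments at heights $\pm\pi$ joining $\operatorname{Re}z=\sigma$ to the tips of $\Upsilon_1$. Because $e^z$ and $\widehat b$ (by \cref{eq:wt-b-2}), hence $\psi$ and $\widehat Y$, are $2\pi i$-periodic, and $e^{kz}$ is $2\pi i$-periodic for integer $k$, the integrand takes equal values at $x+i\pi$ and $x-i\pi$; as the two horizontals run in opposite directions over the same real interval, their contributions cancel. It then remains to check that no singularity is enclosed. The only poles are the zeros of $D$, and these are excluded: \cref{lem:neq0} on $\mathbb C_{+}$; \cref{eq:shit-2} (with $\psi(\pi i)<0$) on the imaginary axis for $0<|\operatorname{Im}z|\leqslant\pi$; and \cref{lem:psi-esti} in the remaining left sector $\pi/2\leqslant|\operatorname{Arg}z|\leqslant\theta$. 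The branch point of $\widehat Y$ at the origin (from the $z^{\alpha-3}$ singularity of $\widehat b$) is integrable and is avoided by the indentation convention already fixed for $\Upsilon_1$. Cauchy's theorem then gives $Y_k=\frac1{2\pi i}\int_{\Upsilon_1}e^{kz}\widehat Y(z)\,\mathrm{d}z$, which is \cref{eq:Y_k} apart from the source term.

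For the source term I would work first on the vertical segment, where $\widehat F(z):=\sum_{j=0}^\infty\big(\int_{t_j}^{t_{j+1}}f\big)e^{-jz}$ converges absolutely; interchanging sum and integral gives $\frac{\tau^{\alpha-1}}{2\pi i}\int_{\sigma-i\pi}^{\sigma+i\pi}e^{kz}\widehat F(z)D(z)^{-1}\,\mathrm{d}z=\tau^{\alpha-1}\sum_{j=0}^\infty\big(\int_{t_j}^{t_{j+1}}f\big)\,\frac1{2\pi i}\int_{\sigma-i\pi}^{\sigma+i\pi}e^{(k-j)z}D(z)^{-1}\,\mathrm{d}z$. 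Applying the deformation of the previous paragraph to each inner integral (now with $e^{nz}D^{-1}$, which has no branch point since $D(0)=2\mu\neq0$) identifies it with $E_{k-j}$ from \cref{eq:E_j}. Comparing with the definition \cref{eq:wtE} of $\widetilde E$ and splitting $[0,t_k]$ over the subintervals $(t_j,t_{j+1})$, on which $\lceil(t_k-t)/\tau\rceil=k-j$, yields $\int_0^{t_k}\widetilde E(t_k-t)f(t)\,\mathrm{d}t=\tau^{\alpha-1}\sum_{j=0}^{k-1}E_{k-j}\int_{t_j}^{t_{j+1}}f$. Matching the two expressions forces the requirement $E_n=0$ for all $n\leqslant0$, which truncates the infinite sum to $0\leqslant j\leqslant k-1$.

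Finally I would prove $E_n=0$ for $n\leqslant0$. By \cref{lem:neq0}, $D^{-1}$ is analytic and $2\pi i$-periodic on $\mathbb C_{+}$, and as $\operatorname{Re}z\to+\infty$ one has $\widehat b(z)\to0$ while $\psi(z)$ and $\mu(e^z+1)$ both grow like $e^{z}$, so $D(z)^{-1}\to0$. In the variable $w=e^{-z}$, $D^{-1}$ is thus analytic in $0<|w|<1$ and bounded near $w=0$, hence extends with a removable singularity of value $0$; this gives a one-sided expansion $D(z)^{-1}=\sum_{n=1}^\infty c_n e^{-nz}$ with no constant term. Inverting on the vertical segment then gives $\frac1{2\pi i}\int_{\sigma-i\pi}^{\sigma+i\pi}e^{nz}D(z)^{-1}\,\mathrm{d}z=c_n$ for $n\geqslant1$ and $0$ for $n\leqslant0$, so indeed $E_n=0$ for $n\leqslant0$. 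Assembling the three pieces proves \cref{eq:Y_k}. I expect the main obstacle to be the deformation of the second paragraph: combining \cref{lem:neq0}, \cref{eq:shit-2} and \cref{lem:psi-esti} to guarantee that $D$ has no zeros anywhere in the closed region swept between the vertical segment and $\Upsilon_1$, together with the periodicity cancellation of the two horizontal connecting segments and the integrable indentation at the origin.
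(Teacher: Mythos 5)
Your proposal is correct and follows essentially the same route as the paper: inversion of the discrete Laplace transform on the vertical segment $[a-i\pi,a+i\pi]$, the same algebraic rewriting of the $y_0$-numerator via subtracting $\tfrac12$ (whose contribution vanishes because $\sin(k\pi)=0$), contour deformation to $\Upsilon_1$ justified by \cref{lem:neq0}, \cref{lem:psi-esti} and the $2\pi i$-periodicity cancellation of the horizontal connecting segments, and Fubini for the source term. The only substantive difference is that you explicitly justify the truncation of $\sum_{j=0}^{\infty}$ to $\sum_{j=0}^{k-1}$ by proving the vertical-line integrals of $e^{(k-j)z}(\psi(z)+\mu(e^z+1))^{-1}$ vanish for $j\geqslant k$ (via the substitution $w=e^{-z}$ and the removable singularity at $w=0$), a step the paper performs without comment.
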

\begin{proof}
  A straightforward computation yields, by \cref{eq:lxy,lem:neq0}, that
  \begin{small}
  \begin{equation}
    \widehat Y(z) = \frac{
      \big( (e^z-1)^2 \widehat b(z) \!+\! \mu e^z \big) y_0 \!+\!
      \tau (e^z-1) \widehat b(z) y_1 \!+\!  \tau^{\alpha-1}
      \sum_{j=0}^\infty \int_{t_j}^{t_{j+1}} \! f(t) \mathrm{d}t e^{-jz}
    }{ \psi(z) + \mu(e^z+1) },
  \end{equation}
  \end{small}
  for all $z \in \mathbb{C}_+$. Hence,
  \begin{align*}
    Y_k = \frac1{2\pi i} \int_{a-i\pi}^{a+i\pi}
    e^{kz} \widehat Y(z) \, \mathrm{d}z =
    \mathbb I_1 + \mathbb I_2 + \mathbb I_3, \quad for \  0 < a <\infty,
  \end{align*}
  where
  \begin{align*}
    \mathbb I_1 &:= \frac{y_0}{2\pi i} \int_{a-i\pi}^{a+i\pi}
    e^{kz} \frac{(e^z-1)^2\widehat b(z) + \mu e^z}{
      \psi(z) + \mu(e^z+1)
    } \, \mathrm{d}z, \\
    \mathbb I_2 &:= \frac{\tau y_1}{2\pi i} \int_{a-i\pi}^{a+i\pi}
    e^{kz} \frac{(e^z-1)\widehat b(z)}{\psi(z) + \mu(e^z+1)} \, \mathrm{d}z, \\
    \mathbb I_3 &:=\frac{\tau^{\alpha-1}}{2\pi i} \int_{a-i\pi}^{a+i\pi}
    e^{kz} \frac{
      \sum_{j=0}^\infty \int_{t_j}^{t_{j+1}} f(t) \, \mathrm{d}t e^{-jz}
    }{\psi(z) + \mu(e^z+1)} \, \mathrm{d}z.
  \end{align*}
  Here, by \cref{lem:neq0} and Cauchy's integral theorem we have
  \begin{align*}
    \mathbb I_1 & = \frac1{2\pi i} \int_{a-i\pi}^{a+i\pi}
    e^{kz} \frac{(e^z-1)^2\widehat b(z) + \mu e^z}
    {\psi(z) + \mu(e^z+1)} \, \mathrm{d}z \\
    & = \frac1{2\pi i} \int_{a-i\pi}^{a+i\pi}
    e^{kz} \Big(
      \frac{(e^z-1)^2\widehat b(z) + \mu e^z}
      {\psi(z) + \mu(e^z+1)} - \frac12
    \Big) \, \mathrm{d}z \\
    & = \frac1{2\pi i} \int_{a-i\pi}^{a+i\pi}
    e^{kz} \frac{
      (e^z-1)^2\widehat b(z) - \psi(z)/2 + \mu(e^z-1)/2
    }{\psi(z) + \mu(e^z+1)} \, \mathrm{d}z \\
    &=\frac{y_0}{2\pi i} \int_{\Upsilon_1} e^{kz} \frac{
      (e^z-1)^2\widehat b(z) - \psi(z)/2 + \mu(e^z-1)/2
    }{\psi(z) + \mu(e^z+1)} \, \mathrm{d}z,
  \end{align*}
  where the latter equality follows from \cref{lem:psi-esti} and the fact that
  \begin{align*}
    & e^{kz} \frac{
      (e^z-1)^2\widehat b(z) - \psi(z)/2 + \mu(e^z-1)/2
    }{\psi(z) + \mu(e^z+1)} \\
    ={} &
    e^{k(z+2\pi i)} \frac{
      (e^{z+2\pi i}-1)^2\widehat b(z+2\pi i) -
      \psi(z+2\pi i)/2 - \mu(e^{z+2\pi i} - 1)/2
    }{\psi(z+2\pi i) + \mu(e^{z+2\pi i}+1)}
  \end{align*}
  for  $\operatorname{Re} z \geqslant-\pi\cot(\theta)$ and $ \operatorname{Im} z = -\pi $.

  A similar argument gives
  \begin{equation}
    \begin{aligned}
      \mathbb I_2 & = \frac{y_1 \tau}{2\pi i} \int_{\Upsilon_1}
      e^{kz} \frac{
        (e^z-1)\widehat b(z)
      }{\psi(z) + \mu(e^z+1)} \, \mathrm{d}z.
    \end{aligned}
  \end{equation}
  We now turn to $ \mathbb I_3 $. Using Fubini's theorem and Cauchy's integral theorem, we have
  \begin{align*}
    \mathbb I_3 & = \frac{\tau^{\alpha-1}}{2\pi i} \int_{a-i\pi}^{a+i\pi}
    e^{kz} \frac{
      \sum_{j=0}^\infty \int_{t_j}^{t_{j+1}} f(t) \, \mathrm{d}t e^{-jz}
    }{\psi(z) + \mu(e^z+1)} \, \mathrm{d}z \\
    &=
    \sum_{j=0}^\infty \int_{t_j}^{t_{j+1}} f(t) \, \mathrm{d}t
    \frac{\tau^{\alpha-1}}{2\pi i} \int_{a-i\pi}^{a+i\pi}
    e^{(k-j)z}(\psi(z) + \mu(e^z+1))^{-1} \, \mathrm{d}z \\
    &=
    \sum_{j=0}^{k-1} \int_{t_j}^{t_{j+1}} f(t) \, \mathrm{d}t
    \frac{\tau^{\alpha-1}}{2\pi i} \int_{a-i\pi}^{a+i\pi}
    e^{(k-j)z}(\psi(z) + \mu(e^z+1))^{-1} \, \mathrm{d}z \\
    &=
    \sum_{j=0}^{k-1} \int_{t_j}^{t_{j+1}} f(t) \, \mathrm{d}t
    \tau^{\alpha-1} E_{k-j} \\
    &=
    \int_0^{t_k} f(t) \widetilde E(t_k-t) \, \mathrm{d}t.
  \end{align*}
  %
  Combining the estimates of $\mathbb I_1, \ \mathbb I_2$ and $\mathbb I_3$ proves \eqref{eq:Y_k} and hence the lemma.
\end{proof}

\subsubsection{Convergence for \texorpdfstring{$f \equiv 0$}{}}
\begin{lem}
  \label{lem:psi+mu}
  For each $ z \in \Upsilon_1 \setminus \{0\} $,
  \begin{equation}
    \label{eq:psi+mu}
    \snm{\psi(z) + \mu(1+e^z)} > C_{\alpha,\mu_0} (\mu + \snm{z}^\alpha).
  \end{equation}
\end{lem}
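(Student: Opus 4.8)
The plan is to exploit that $\Upsilon_1$ is a \emph{bounded} contour (on each ray $z=re^{i\theta}$ the constraint $\operatorname{Im} z=r\sin\theta\le\pi$ forces $|z|\le\pi/\sin\theta$), so there is no large-$|z|$ regime and I only need to split $\Upsilon_1\setminus\{0\}$ into a small neighbourhood of the origin, where both $\psi(z)$ and $\mu(1+e^z)$ are small and the estimate is delicate, and the complementary compact arc, where continuity and compactness suffice. Throughout I would reduce to the upper part $\{z\in\Upsilon_1:\operatorname{Im} z>0\}$ using the conjugation symmetry \cref{eq:psi-conj} together with $\overline{1+e^z}=1+e^{\bar z}$, since $\Upsilon_1$ is symmetric about the real axis.

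Near the origin I would invoke the asymptotic \cref{eq:limt_0}: on the upper ray $z=re^{i\theta}$ it gives $\psi(z)=r^\alpha(\cos(\alpha\theta)+i\sin(\alpha\theta))(1+\epsilon(r))$ with $\epsilon(r)\to0$ uniformly. The geometric crux is that $\alpha\theta\in(\pi/2,\pi)$ \emph{strictly}: indeed $\alpha\theta>\alpha\pi/2>\pi/2$ and $\alpha\theta\le\tfrac{\alpha+2}{4}\pi<\pi$ because $1<\alpha<2$, so $\sin(\alpha\theta)\ge c_\alpha>0$ and $\cos(\alpha\theta)<0$. Hence for $|z|$ below a threshold $\rho=\rho_{\alpha,\mu_0}$ one has $\operatorname{Im}\psi(z)\ge\tfrac12\sin(\alpha\theta)\,r^\alpha$ and $|\operatorname{Re}\psi(z)|\le 2r^\alpha$, while $\mu(1+e^z)=\mu\bigl(1+e^{r\cos\theta}(\cos(r\sin\theta)+i\sin(r\sin\theta))\bigr)$ has $\operatorname{Im}\ge0$ (since $0<r\sin\theta\le\pi$) and $\operatorname{Re}\ge\mu$ for small $r$. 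Adding, $\operatorname{Im}(\psi+\mu(1+e^z))\ge\tfrac12\sin(\alpha\theta)r^\alpha$ and $\operatorname{Re}(\psi+\mu(1+e^z))\ge\mu-2r^\alpha$; splitting into the cases $r^\alpha\le\mu/4$ (real part dominates) and $r^\alpha>\mu/4$ (imaginary part dominates) then yields $|\psi(z)+\mu(1+e^z)|\ge C_{\alpha,\mu_0}(\mu+|z|^\alpha)$ for $|z|\le\rho$, uniformly in $\mu\le\mu_0$.

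On the compact arc $K:=\{z\in\Upsilon_1:\operatorname{Im} z>0,\ |z|\ge\rho\}$ both $\mu+|z|^\alpha$ (bounded by some $M=M_{\alpha,\mu_0}$) and $(z,\mu)\mapsto\psi(z)+\mu(1+e^z)$ are continuous, so it suffices to bound the latter below by a positive constant uniformly in $\mu\in[0,\mu_0]$. For $\mu>0$ this is exactly \cref{lem:psi-esti}, since every $z\in K$ satisfies $\operatorname{Arg} z=\theta\in[\pi/2,\theta_{\alpha,\mu_0}]$ and $0<\operatorname{Im} z\le\pi$. The only remaining point is the endpoint $\mu=0$, i.e.\ that $\psi(z)\ne0$ on $K$; equivalently, since $e^{-z}(e^z-1)^3\ne0$ there (as $\operatorname{Re} z<0$ forbids $e^z=1$), that $\widehat b(z)\ne0$ on $K$. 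Granting this, the extended map is continuous and strictly positive on the compact set $K\times[0,\mu_0]$, hence bounded below by some $c>0$, and $|\psi(z)+\mu(1+e^z)|\ge c\ge\tfrac{c}{M}(\mu+|z|^\alpha)$ on $K$.

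The main obstacle is precisely this nonvanishing on the compact arc with uniformity down to $\mu=0$. A clean way to dispatch it is to prove the stronger pointwise statement $\operatorname{Im}\psi(z)>0$ for every $z\in\Upsilon_1$ with $\operatorname{Im} z>0$, in the spirit of the sign computation \cref{eq:shit-2} used in \cref{lem:psi-esti} and consistent with the near-origin behaviour $\operatorname{Im}\psi\approx\sin(\alpha\theta)|z|^\alpha>0$. Because $\operatorname{Im}(\mu(1+e^z))\ge0$ on the upper arc, such a bound would give $|\psi(z)+\mu(1+e^z)|\ge\operatorname{Im}(\psi(z)+\mu(1+e^z))\ge\operatorname{Im}\psi(z)>0$ directly, simultaneously removing the need to treat $\mu=0$ separately and eliminating any cancellation analysis. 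Verifying this sign condition uniformly along the whole rotated ray (rather than just on the imaginary axis as in \cref{eq:shit-2}, and near the origin as in \cref{eq:limt_0}) is the delicate computation; the final constant $C_{\alpha,\mu_0}$ is then the minimum of those produced in the two regions.
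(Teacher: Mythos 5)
Your decomposition is exactly the paper's: split $\Upsilon_1\setminus\{0\}$ at a small radius $r_{\alpha,\mu_0}$, handle the near-origin piece with the leading-order asymptotics of $\psi$, and handle the remaining compact arc by compactness plus nonvanishing. Your near-origin argument (separating real and imaginary parts and casing on $r^\alpha\lessgtr\mu/4$) is a harmless variant of the paper's, which instead expands $(1+e^{re^{i\theta}})^{-1}\psi(re^{i\theta})=r^\alpha e^{i\alpha\theta}/2+r^{\alpha+1}g(r)$ and bounds the squared modulus from below by $(\mu+r^\alpha\cos(\alpha\theta)/2)^2/2+r^{2\alpha}\sin(\alpha\theta)^2/8-r^{2(\alpha+1)}\snm{g(r)}^2$; both hinge on the same geometric fact $\alpha\theta\in(\pi/2,\pi)$, so $\sin(\alpha\theta)\geqslant c_\alpha>0$, and both are correct.

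The one place your write-up is not a proof is the compact arc: you correctly observe that a compactness argument uniform in $\mu$ requires nonvanishing at the endpoint $\mu=0$, i.e.\ $\psi(z)\neq 0$ there, but you only say ``granting this'' and defer the verification of $\operatorname{Im}\psi(z)>0$ along the rotated ray as ``the delicate computation.'' That step must actually be supplied. It does not require a new computation: the proof of \cref{lem:psi-esti} already establishes $\operatorname{Im}\big((1+e^z)^{-1}\psi(z)\big)>0$ on the regions \cref{eq:426} and \cref{eq:731-1}, which cover the upper part of the arc except for a neighbourhood of $z=i\pi$, and this inequality is a statement about $\psi$ alone (adding the real number $\mu$ does not change the imaginary part), hence gives $\psi(z)\neq0$ there for $\mu=0$ as well; near $z=i\pi$ one uses $\psi(i\pi)=8A(\pi)<0$ together with $1+e^{i\pi}=0$ and continuity, which again is uniform over $\mu\in[0,\mu_0]$. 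With that borrowed from the interior of \cref{lem:psi-esti} (rather than from its bare statement), your extreme-value argument on $K\times[0,\mu_0]$ closes, and the lemma follows; the paper compresses this same step into a one-line appeal to the extreme value theorem and \cref{eq:psi-esti}.
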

\begin{proof}
  By
  \cref{eq:psi} there exists a continuous function $ g $ on $ [0, \pi/\sin\theta]$
  such that
  \[
    (1+re^{i\theta})^{-1} \psi(re^{i\theta}) =
    r^\alpha e^{i\alpha\theta}/2 + r^{\alpha+1} g(r).
  \]
  It follows that
  \begin{align}
    & \snm{\mu + (1+re^{i\theta})^{-1} \psi(re^{i\theta})}^2 \notag \\
    ={} &
    \snm{\mu + r^\alpha e^{i\alpha\theta}/2 + r^{\alpha+1} g(r)}^2 \notag \\
    \geqslant{} &
    \snm{\mu + r^\alpha e^{i\alpha\theta}/2}^2/2 -
    r^{2(\alpha+1)} \snm{g(r)}^2 \notag \\
    ={} &
    (\mu+r^\alpha\cos(\alpha\theta)/2)^2/2 +
    r^{2\alpha}\sin(\alpha\theta)^2/8 -
    r^{2(\alpha+1)} \snm{g(r)}^2, \label{eq:shit-10}
  \end{align}
  and hence there exists $ 0 < r_{\alpha,\mu_0} < \pi/\sin\theta $, depending only on $
  \alpha $ and $ \mu_0 $, such that
  \[
    \snm{\mu + (1+re^{i\theta})^{-1}\psi(re^{i\theta})} >
    C_{\alpha,\mu_0} (\mu+r^\alpha) \quad
    \text{for all} \ 0 < r \leqslant r_{\alpha,\mu_0}.
  \]
  Therefore,
  \begin{align*}
    \inf_{
      \substack{
        0 < r \leqslant r_{\alpha,\mu_0}
      }
    } \frac{\snm{\mu + (1+re^{i\theta})^{-1}\psi(re^{i\theta})}}
    {\mu + r^\alpha} > C_{\alpha,\mu_0}.
  \end{align*}
  Using this estimate and
  \[
    \snm{1+re^{i\theta}} > C_{\alpha,\mu_0},
    \quad  \text{for all} \ 0 \leqslant r \leqslant \pi/\sin\theta,
  \]
  we have
  \begin{align*}
    \inf_{
      \substack{
        0 < r \leqslant r_{\alpha,\mu_0}
      }
    } \frac{\snm{\psi(re^{i\theta}) + \mu(1+r^{i\theta})}}
    {\mu + r^\alpha} > C_{\alpha,\mu_0}.
  \end{align*}
  In addition, applying the extreme value theorem yields, by \cref{eq:psi-esti}, that
  \[
    \inf_{
      \substack{
        r_{\alpha,\mu_0} \leqslant r \leqslant \pi/\sin\theta
      }
    } \frac{\snm{\psi(re^{i\theta}) + \mu(re^{i\theta} + 1)}}
    {\mu+r^\alpha} > C_{\alpha,\mu_0}.
  \]
  Together, the above two estimates show
  \[
    \inf_{
      \substack{
        0 < r \leqslant \pi/\sin\theta
      }
    } \frac{\snm{\psi(re^{i\theta}) + \mu(re^{i\theta} + 1)}}
    {\mu+r^\alpha} > C_{\alpha,\mu_0},
  \]
  which completes the proof.
\end{proof}

\begin{lem}
  \label{lem:shit-1}
  For each $ z \in \Upsilon_1 \setminus \{0\} $,
  \begin{equation} \label{eq:36}
    \snm{z + 2\mu z^{1-\alpha}} > C_{\alpha}
    (\snm{z} + \mu \snm{z}^{1-\alpha}).
  \end{equation}
\end{lem}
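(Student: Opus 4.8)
The plan is to exploit the geometric fact that, on the contour $\Upsilon_1$, the two summands $z$ and $2\mu z^{1-\alpha}$ cannot nearly cancel, because the angle between them is bounded away from $\pi$. By conjugate symmetry --- note $\overline{z+2\mu z^{1-\alpha}}=\bar z+2\mu\bar z^{1-\alpha}$ since $\mu$ is real and $z\notin(-\infty,0]$ --- it suffices to treat $z=re^{i\theta}$ with $0<r\leqslant\pi/\sin\theta$. The first step is to factor
\[
  z+2\mu z^{1-\alpha}=z^{1-\alpha}\big(z^\alpha+2\mu\big),
\]
using the branch identity $z^{1-\alpha}z^\alpha=z$, which is routine to check for $z=re^{i\theta}$. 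Taking moduli and using $\snm{z^{1-\alpha}}=\snm{z}^{1-\alpha}$, the claim \cref{eq:36} becomes equivalent to $\snm{z^\alpha+2\mu}>C_\alpha(\snm{z}^\alpha+\mu)$, i.e.\ to bounding below the modulus of the sum of the complex number $z^\alpha=r^\alpha e^{i\alpha\theta}$ and the positive real number $2\mu$.

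Next I would write $\snm{z^\alpha+2\mu}^2=r^{2\alpha}+4\mu^2+4\mu r^\alpha\cos(\alpha\theta)$ and control $\cos(\alpha\theta)$ from below. This is the crux. Since \cref{lem:psi-esti} provides $\tfrac\pi2<\theta\leqslant\tfrac{\alpha+2}{4\alpha}\pi$, we get $\alpha\theta\in(\tfrac\pi2,\tfrac{\alpha+2}{4}\pi]$, and because $\alpha<2$ the right endpoint obeys $\tfrac{\alpha+2}{4}\pi<\pi$; hence
\[
  \cos(\alpha\theta)\geqslant\cos\!\Big(\tfrac{\alpha+2}{4}\pi\Big)=:-\kappa_\alpha,
  \qquad 0<\kappa_\alpha<1,
\]
with $\kappa_\alpha$ depending only on $\alpha$. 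The whole argument turns on $\alpha\theta$ staying strictly below $\pi$, so that $z^\alpha$ and $2\mu$ are never anti-parallel; this is exactly what the bound $\theta\leqslant\tfrac{\alpha+2}{4\alpha}\pi$ supplies, and it is what makes the resulting constant independent of $\mu_0$. I expect this uniformity to be the main point to get right.

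Substituting gives $\snm{z^\alpha+2\mu}^2\geqslant r^{2\alpha}+4\mu^2-4\kappa_\alpha\mu r^\alpha$, and it remains to compare this with $(r^\alpha+2\mu)^2=r^{2\alpha}+4\mu^2+4\mu r^\alpha$. Setting $s=r^\alpha>0$ and $t=2\mu\geqslant 0$, this reduces to the elementary inequality $s^2+t^2-2\kappa_\alpha st\geqslant C_\alpha(s+t)^2$: dividing by $s^2$ and putting $u=t/s$, the ratio $(1+u^2-2\kappa_\alpha u)/(1+u)^2$ is continuous and strictly positive on $[0,\infty)$ (positivity of the numerator uses $\kappa_\alpha<1$) and tends to $1$ at both ends, so it attains a positive minimum $C_\alpha$. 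Hence $\snm{z^\alpha+2\mu}\geqslant\sqrt{C_\alpha}\,(r^\alpha+2\mu)\geqslant\sqrt{C_\alpha}\,(\snm{z}^\alpha+\mu)$, and multiplying back by $\snm{z}^{1-\alpha}$ yields \cref{eq:36}. The only remaining care is the scalar minimization and the branch bookkeeping, both routine.
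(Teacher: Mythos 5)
Your proof is correct, and it takes a mildly different route from the paper's. Both arguments start from the same factorization $z+2\mu z^{1-\alpha}=z^{1-\alpha}(z^\alpha+2\mu)$ (up to the paper also using $z(1+2\mu z^{-\alpha})$), but the mechanisms for the lower bound differ. The paper bounds $\snm{z+2\mu z^{1-\alpha}}$ twice by the absolute value of an imaginary part --- once to get $\geqslant C_\alpha\mu\snm{z}^{1-\alpha}$ and once to get $\geqslant C_\alpha\snm{z}$ --- and then adds the two estimates; this relies on $\sin(\alpha\theta)$ being bounded away from $0$, i.e.\ on $\alpha\theta$ staying away from both $0$ and $\pi$. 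You instead expand $\snm{z^\alpha+2\mu}^2$ by the law of cosines, use only the one-sided bound $\cos(\alpha\theta)\geqslant-\kappa_\alpha>-1$ (i.e.\ $\alpha\theta$ bounded away from $\pi$, which is exactly what $\theta\leqslant\tfrac{\alpha+2}{4\alpha}\pi$ with $\alpha<2$ gives), and finish with the scalar inequality $s^2+t^2-2\kappa_\alpha st\geqslant C_\alpha(s+t)^2$. Your identification of the key point --- that the constant depends only on $\alpha$ and not on $\mu_0$ because the upper bound on $\theta$ is $\mu_0$-independent --- is exactly right, and your version is marginally more robust in that it would also survive on the small arc near the origin of the deformed contour, where the argument of $z$ can vanish and the paper's imaginary-part bound degenerates (harmlessly, since there $z^\alpha+2\mu$ is then essentially positive real). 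Both proofs are equally elementary; there is nothing to fix.
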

\begin{proof}
  A simple calculation yields
  \begin{align*}
    \snm{z+2\mu z^{1-\alpha}}&=\snm{z}\snm{1+2\mu z^{-\alpha}} \\ &=r\snm{1+2\mu r^{-\alpha}\cos(-\alpha \theta) +2i\mu r^{-\alpha}\sin(-\alpha \theta) } \\ &
    \geqslant C_{\alpha} \mu r^{1-\alpha}.
  \end{align*}
  Analogously, we have
  \begin{align*}
    \snm{z+2\mu z^{1-\alpha}}&=\snm{z}^{1-\alpha}\snm{z^{\alpha}+2\mu } \\ &=r^{1-\alpha}\snm{2\mu +r^{\alpha}\cos(\alpha \theta) +i r^{\alpha}\sin(\alpha \theta) } \\ &
    \geqslant C_{\alpha} r.
  \end{align*}
  Combining above two estimates proves \cref{eq:36} and hence the lemma.
\end{proof}
\begin{thm}
  \label{thm:y-Y-y0}
  For each $ k \in \mathbb N_{>0} $,
  \begin{equation}
    \label{eq:y-Y}
    \snm{y(t_k) - Y_k} \leqslant C_{\alpha,\mu_0}
    \tau^{3-\alpha} \big(
      t_k^{\alpha-3} \snm{y_0} + t_k^{\alpha-2} \snm{y_1}
    \big).
  \end{equation}
\end{thm}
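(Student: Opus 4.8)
The plan is to subtract the integral representations \cref{eq:y} and \cref{eq:Y_k} (both with $f\equiv0$) and to estimate the resulting contour integral. Because $t_k/\tau=k$, the two kernels share the factor $e^{kz}$, so after splitting $\int_\Upsilon=\int_{\Upsilon_1}+\int_{\Upsilon\setminus\Upsilon_1}$ in \cref{eq:y} I would write
\begin{align*}
  y(t_k)-Y_k &= \frac1{2\pi i}\int_{\Upsilon_1} e^{kz}\big(y_0\Delta_0(z)+\tau y_1\Delta_1(z)\big)\,\mathrm{d}z \\
  &\quad {}+ \frac1{2\pi i}\int_{\Upsilon\setminus\Upsilon_1} e^{kz}\,\frac{y_0 z^{\alpha-1}+\tau y_1 z^{\alpha-2}}{z^\alpha+2\mu}\,\mathrm{d}z,
\end{align*}
where the scalar symbol differences are
\begin{align*}
  \Delta_0(z) &:= \frac{z^{\alpha-1}}{z^\alpha+2\mu}-\frac{(e^z-1)^2\widehat b(z)-\psi(z)/2+\mu(e^z-1)/2}{\psi(z)+\mu(e^z+1)}, \\
  \Delta_1(z) &:= \frac{z^{\alpha-2}}{z^\alpha+2\mu}-\frac{(e^z-1)\widehat b(z)}{\psi(z)+\mu(e^z+1)}.
\end{align*}
By linearity it then suffices to estimate the $y_0$- and $\tau y_1$-contributions separately.

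The tail over $\Upsilon\setminus\Upsilon_1$ is the easy part. There $|z|\geqslant\pi/\sin\theta$ and $\operatorname{Re}z=|z|\cos\theta<0$ (since $\theta>\pi/2$), so $|e^{kz}|=e^{-kr|\cos\theta|}$ decays exponentially; together with $|z^{\alpha-1}/(z^\alpha+2\mu)|\leqslant C|z|^{-1}$ and $|z^{\alpha-2}/(z^\alpha+2\mu)|\leqslant C|z|^{-2}$ (both from $|z^\alpha+2\mu|\geqslant C_\alpha(\mu+|z|^\alpha)$, a consequence of \cref{lem:shit-1}) the tail is $O(e^{-ck})$, which is dominated by the target bound. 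For the main integral I would parametrize $\Upsilon_1$ by $z=re^{\pm i\theta}$, $0\leqslant r\leqslant\pi/\sin\theta$, and reduce everything to the pointwise estimates $|\Delta_0(z)|\leqslant C_{\alpha,\mu_0}|z|^{2-\alpha}$ and $|\Delta_1(z)|\leqslant C_{\alpha,\mu_0}|z|^{1-\alpha}$ on $\Upsilon_1\setminus\{0\}$. Granting these, $\int_0^\infty e^{-kr|\cos\theta|}r^{2-\alpha}\,\mathrm{d}r=C_\theta k^{\alpha-3}$ and $\int_0^\infty e^{-kr|\cos\theta|}r^{1-\alpha}\,\mathrm{d}r=C_\theta k^{\alpha-2}$ (both integrals converging at $0$ since $\alpha<2$); multiplying by $|y_0|$ and $\tau|y_1|$ and using $k=t_k/\tau$, i.e.\ $k^{\alpha-3}=\tau^{3-\alpha}t_k^{\alpha-3}$ and $\tau k^{\alpha-2}=\tau^{3-\alpha}t_k^{\alpha-2}$, gives precisely the two terms in \cref{eq:y-Y}.

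The heart of the argument, and the step I expect to be hardest, is the pair of pointwise symbol estimates; a naive term-by-term split of $\Delta_0,\Delta_1$ is too lossy because the leading singular parts of the two integrands cancel and only the next order survives. I would write each difference as a single fraction $\mathcal N(z)\big/\big((z^\alpha+2\mu)(\psi(z)+\mu(e^z+1))\big)$, bound the denominator below by $C_{\alpha,\mu_0}(\mu+|z|^\alpha)^2$ via \cref{lem:shit-1,lem:psi+mu}, and expand the numerator $\mathcal N$ near the origin grouped by powers of $\mu$. For this I would use $\psi=e^{-z}(e^z-1)^3\widehat b$, the identity $e^{-z}(e^z-1)^2=2\cosh z-2=z^2+z^4/12+\cdots$, and, crucially, $\widehat b(z)=z^{\alpha-3}+r(z)$ with $r$ analytic on $\{|\operatorname{Im}w|<2\pi\}$ and $r(0)=(\widehat b-z^{\alpha-3})(0)=2\sin(\alpha\pi/2)\sum_{k\geqslant1}(2k\pi)^{\alpha-3}\neq0$. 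This nonzero constant is decisive: it makes the $\mu^1$-coefficient of the $\Delta_0$-numerator equal to $2r(0)z^2+O(|z|^{\min(\alpha+1,3)})$ (rather than $O(|z|^{\alpha+1})$) and that of the $\Delta_1$-numerator equal to $2r(0)z+O(|z|^{\alpha})$. Feeding these into $\mu|z|^2/(\mu+|z|^\alpha)^2\leqslant|z|^{2-\alpha}/4$ and $\mu|z|/(\mu+|z|^\alpha)^2\leqslant|z|^{1-\alpha}/4$ (each using $(\mu+|z|^\alpha)^2\geqslant4\mu|z|^\alpha$) yields exactly the exponents $2-\alpha$ and $1-\alpha$, while the remaining coefficients contribute only $O(|z|)$ and $O(1)$, dominated by the target exponents on the bounded arc. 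Thus $r(0)\neq0$ is precisely what caps the accuracy at order $3-\alpha$, consistent with the remark motivating \cref{discr:ode-2}.

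It remains to secure uniformity in $\mu$. Every $O(\cdot)$ constant above comes from $\mu$-independent Taylor coefficients of $\psi$, $\widehat b$ and $e^z$, and $\mu\leqslant\mu_0$, so the pointwise bounds hold uniformly for $0<\mu\leqslant\mu_0$ near the origin; away from the origin on the compact arc $\Upsilon_1$ the lower bounds on $|z^\alpha+2\mu|$ and $|\psi(z)+\mu(e^z+1)|$ keep $\Delta_0,\Delta_1$ uniformly bounded while $|z|$ is bounded below, so enlarging the constant preserves $|\Delta_0|\leqslant C|z|^{2-\alpha}$ and $|\Delta_1|\leqslant C|z|^{1-\alpha}$ there. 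Combining the main-integral and tail estimates and inserting the prefactors $|y_0|$ and $\tau|y_1|$ then yields \cref{eq:y-Y}.
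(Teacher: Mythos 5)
Your proposal is correct and follows essentially the same route as the paper's proof: split the error into the tail over $\Upsilon\setminus\Upsilon_1$ plus the two symbol differences over $\Upsilon_1$, combine each difference into a single fraction, lower-bound the denominator via \cref{lem:psi+mu,lem:shit-1}, and Taylor-expand the numerator near the origin (your numerator and denominator are just the paper's multiplied by $z^{\alpha-1}$, and your AM--GM step $(\mu+\snm{z}^\alpha)^2\geqslant 4\mu\snm{z}^\alpha$ replaces the paper's case split $r\lessgtr\mu^{1/\alpha}$, yielding the same pointwise bounds $\snm{z}^{2-\alpha}$ and $\snm{z}^{1-\alpha}$ and hence $k^{\alpha-3}$, $k^{\alpha-2}$ after integration). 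The only blemish is a sign error in $(\widehat b-z^{\alpha-3})(0)$, which is immaterial since only its nonvanishing is used.
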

\begin{proof}
  From \cref{eq:y,eq:Y_k}, it follows that
  \[
    y(t_k) - Y_k = \mathbb I_1 + \mathbb I_2 + \mathbb I_3,
  \]
  where
  \begin{align*}
    \mathbb I_1 &:= \frac1{2\pi i} \int_{\Upsilon\setminus\Upsilon_1}
    e^{kz} \frac{y_0 z^{\alpha-1} + \tau y_1 z^{\alpha-2} }{z^\alpha+2\mu} \, \mathrm{d}z, \\
    \mathbb I_2 &:= \frac{y_0}{2\pi i} \int_{\Upsilon_1}
    e^{kz}\Big(
      \frac{z^{\alpha-1}}{z^\alpha + 2\mu} - \frac{
        (e^z - 1)^2 \widehat b(z) - \psi(z)/2 + \mu(e^z-1)/2
      }{\psi(z) + \mu(e^z+1)}
    \Big) \, \mathrm{d}z, \\
    \mathbb{I}_3 &:=
    \frac{\tau y_1}{2\pi i}  \int_{ \Upsilon_1} e^{kz}
    \Big(
      \frac{z^{\alpha-2}}{z^\alpha + 2\mu} -
      \frac{\widehat b(z)(e^z-1)}{\psi(z)+\mu(e^z+1)}
    \Big) \ \mathrm{d}z.
  \end{align*}

  Let us first estimate $ \mathbb I_1 $. A simple calculation gives
  \begin{align*}
    \mathbb I_1 &= \frac1\pi \operatorname{Im}
    \int_{\pi/\sin\theta}^\infty e^{kre^{i\theta}}
    \frac{
      y_0 (re^{i\theta})^{\alpha-1} +
      \tau y_1 (re^{i\theta})^{\alpha-2}
    }{(re^{i\theta})^\alpha + 2\mu} e^{i\theta} \, \mathrm{d}r,
  \end{align*}
  and the fact $ \pi/2 < \theta < (\alpha+2)/(4\alpha)\pi $ implies
  \begin{align*}
    & \snm{
      \frac{
        y_0 (re^{i\theta})^{\alpha-1} +
        \tau y_1 (re^{i\theta})^{\alpha-2}
      }{(re^{i\theta})^\alpha + 2\mu} e^{i\theta}
    } \\
    \leqslant{} &
    \frac{
      \snm{y_0}r^{\alpha-1}  + \tau \snm{y_1} r^{\alpha-2}
    }{
      \snm{
        r^\alpha\cos(\alpha\theta) + 2\mu + ir^\alpha\sin(\alpha\theta)
      }
    } \\
    \leqslant{} & C_{\alpha,\mu_0}
    \big( \snm{y_0} r^{-1} + \tau \snm{y_1} r^{-2} \big).
  \end{align*}
  Hence,
  \begin{align}
    \snm{\mathbb I_1} & \leqslant
    C_{\alpha,\mu_0} \int_{\pi/\sin\theta}^\infty
    e^{kr\cos\theta} \big(
      \snm{y_0} r^{-1} + \tau \snm{y_1} r^{-2}
    \big) \, \mathrm{d}r \notag \\
    & \leqslant C_{\alpha,\mu_0} \big( \snm{y_0}k^{-1} + \tau \snm{y_1} k^{-1} \big)
    e^{k\pi\cot\theta}.
    \label{eq:esti-I1}
  \end{align}

  Then let us estimate $ \mathbb I_2 $. For $ z \in \Upsilon_1 \setminus \{0\} $,
  a straightforward calculation gives
  \begin{small}
  \begin{align*}
    & \snm{
      \psi(z) + \mu(1+e^z) -
      (z+2\mu z^{1-\alpha}) \Big(
        (e^z-1)^2\widehat b(z) - \psi(z)/2 +
        \mu(e^z-1)/2
      \Big)
    } \\
    < & C_\alpha \big(
      \snm{z}^{\alpha+2} + \mu\snm{z}^{3-\alpha} + \mu^2 \snm{z}^{2-\alpha}
    \big),
  \end{align*}
  \end{small}
  and so \cref{lem:psi+mu,lem:shit-1} imply
  \begin{align*}
    & \snm{
      \frac1{z + 2\mu z^{1-\alpha}} -
      \frac{
        (e^z-1)^2 \widehat b(z) - e^{-z}(e^z-1)^3 \widehat b(z)/2 +
        \mu(e^z-1)/2
      }{\psi(z) + \mu(1+e^z)}
    } \\
    <{} &
    C_{\alpha,\mu_0} \frac{
      \snm{z}^{\alpha+2} + \mu\snm{z}^{3-\alpha} + \mu^2\snm{z}^{2-\alpha}
    }{(\snm{z} + \mu \snm{z}^{1-\alpha})(\snm{z}^\alpha + \mu)}.
  \end{align*}
  It follows that
  \begin{align*}
    \snm{\mathbb I_2} \leqslant C_{\alpha,\mu_0}
    \snm{y_0} \int_0^{\pi/\sin\theta}
    e^{kr\cos\theta} \frac{
      r^{\alpha+2} + \mu r^{3-\alpha} + \mu^2 r^{2-\alpha}
    }{(r + \mu r^{1-\alpha})(r^\alpha + \mu)} \, \mathrm{d}r.
  \end{align*}
  If $0 < r < \mu^{1/\alpha}$ then
  \begin{align*}
    & \frac{
      r^{\alpha+2} + \mu r^{3-\alpha} + \mu^2 r^{2-\alpha}
    }{(r + \mu r^{1-\alpha})(r^\alpha + \mu)}\\ < {} &
    \mu^{-2} r^{\alpha-1}
    (r^{\alpha+2} + \mu r^{3-\alpha} + \mu^2 r^{2-\alpha}) \\
    ={} & \mu^{-2} r^{2\alpha+1} + \mu^{-1} r^2 + r <
    2r + r^{2-\alpha},
  \end{align*}
  and if $ \mu^{1/\alpha} < r$ then
  \begin{align*}
    & \frac{
      r^{\alpha+2} + \mu r^{3-\alpha} + \mu^2 r^{2-\alpha}
    }{(r + \mu r^{1-\alpha})(r^\alpha + \mu)} \\<{}&
    r^{-\alpha-1} (r^{\alpha+2} + \mu r^{3-\alpha} + \mu^2 r^{2-\alpha}) \\
    ={} &
    r + \mu r^{2-2\alpha} + \mu^2r^{1-2\alpha} <
    2r + r^{2-\alpha}.
  \end{align*}
  Therefore,
  \begin{equation}
    \label{eq:esti-I2}
    \snm{\mathbb I_2} \leqslant C_{\alpha,\mu_0}
    \snm{y_0} \int_0^{\pi/\sin\theta}
    e^{kr\cos\theta} r^{2-\alpha} \, \mathrm{d}r
    \leqslant C_{\alpha,\mu_0} \snm{y_0} k^{\alpha-3}.
  \end{equation}

  Finally, a similar argument as that to derive \cref{eq:esti-I2} yields
  \begin{equation}
    \label{eq:esti-I3}
    \snm{\mathbb I_3} \leqslant C_{\alpha,\mu_0} \tau k^{\alpha-2}\snm{y_1},
  \end{equation}
  and then combining \cref{eq:esti-I1,eq:esti-I2,eq:esti-I3} gives
  \begin{align*}
    \snm{y(t_k) - Y_k} & \leqslant C_{\alpha,\mu_0}
    \big( k^{\alpha-3} \snm{y_0} + \tau k^{\alpha-2} \snm{y_1} \big) \\
    & = C_{\alpha,\mu_0} \tau^{3-\alpha} \big(
      t_k^{\alpha-3} \snm{y_0} +
      t_k^{\alpha-2} \snm{y_1}
    \big),
  \end{align*}
  which proves \cref{eq:y-Y} and hence this theorem.
\end{proof}
\begin{rem} \label{rem:motivation}
In the above proof,   
	\begin{align*}
	& \snm{
		\psi(z) + \mu(1+e^z) -
		(z+2\mu z^{1-\alpha}) \Big(
		(e^z-1)^2\widehat b(z) - \psi(z)/2 +
		\mu(e^z-1)/2
		\Big)
	} \\
	< & C_\alpha \big(
	\snm{z}^{\alpha+2} + \mu\snm{z}^{3-\alpha} + \mu^2 \snm{z}^{2-\alpha}
	\big),
	\end{align*}
and the term $ \mu\snm{z}^{3-\alpha} $ leads to $ (3-\alpha) $-order accuracy. If we choose a $\beta$ 
such that \text{(i.e. ($\widehat \beta(z) -z^{\alpha-3} )=0$)}	\begin{align*}
& \snm{
	\Psi(z) + \mu(1+e^z) -
	(z+2\mu z^{1-\alpha}) \Big(
	(e^z-1)^2\widehat \beta(z) - \Psi(z)/2 +
	\mu(e^z-1)/2
	\Big)
} \\
< & C_\alpha \big(
\snm{z}^{\alpha+2} + \mu\snm{z}^{2} + \mu^2 \snm{z}^{2-\alpha}
\big),
\end{align*} then we can obtain $2 $-order accuracy, where $
\Psi(z) = e^{-z}(e^z-1)^3 \widehat \beta(z)$. This is the motivation of the second discretization.
\end{rem}

\subsubsection{Convergence for \texorpdfstring{$ y_0 = y_1 = 0 $}{}}
Define
\[
  \mathcal E(t) := \int_0^t (E-\widetilde E)(s) \, \mathrm{d}s,
  \quad t > 0,
\]
where $ E $ and $ \widetilde E $ are defined by \cref{eq:E} and \cref{eq:wtE},
respectively.
\begin{lem}
  \label{lem:cal_E_t_k}
  For any $ t_k < t \leqslant t_{k+1} $ with $ k \in \mathbb N $,
  \begin{equation}
    \label{eq:E-wtE*1}
    \snm{\mathcal E(t)} < C_{\alpha,\mu_0}
    \varepsilon(\alpha,\tau,k) \tau^{3-\alpha},
  \end{equation}
  where
  \begin{equation}
    \label{eq:varepsilon}
    \varepsilon(\alpha,\tau,k) :=
    \begin{cases}
      t_{k+1}^{2\alpha-3} & \text{ if } 1 < \alpha < 3/2, \\
      1+\snm{\ln\tau} & \text{ if } \alpha = 3/2, \\
      1 & \text{ if } 3/2 < \alpha < 2.
    \end{cases}
  \end{equation}
\end{lem}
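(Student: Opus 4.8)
The plan is to mirror the contour-integral strategy already carried out for the homogeneous problem in \cref{thm:y-Y-y0}, now applied to the time-integrated kernels. First I would produce matching integral representations for the two antiderivatives. Writing $t/\tau = k+\sigma$ with $\sigma:=(t-t_k)/\tau\in(0,1]$ and interchanging $\int_0^t$ with the contour integral in \cref{eq:E}, one gets
\begin{equation*}
  \int_0^t E(s)\,\mathrm ds = \frac{\tau^\alpha}{2\pi i}\int_\Upsilon
  \frac{e^{(k+\sigma)z}-1}{z(z^\alpha+2\mu)}\,\mathrm dz,
\end{equation*}
whereas integrating the piecewise-constant $\widetilde E$ and using \cref{eq:wtE,eq:E_j} together with $\sum_{j=1}^k e^{jz}=e^z(e^{kz}-1)/(e^z-1)$ yields
\begin{equation*}
  \int_0^t \widetilde E(s)\,\mathrm ds = \frac{\tau^\alpha}{2\pi i}\int_{\Upsilon_1}
  \Big(\frac{e^z(e^{kz}-1)}{e^z-1}+\sigma e^{(k+1)z}\Big)
  \frac{\mathrm dz}{\psi(z)+\mu(e^z+1)}.
\end{equation*}
Subtracting and splitting $\Upsilon=\Upsilon_1\cup(\Upsilon\setminus\Upsilon_1)$, the tail over $\Upsilon\setminus\Upsilon_1$ decays like $e^{k\pi\cot\theta}$, exactly as for $\mathbb I_1$ in \cref{thm:y-Y-y0}, and is negligible against any polynomial rate.

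On $\Upsilon_1$ the heart of the matter is the pointwise size of the symbol difference
\begin{equation*}
  D(z):=\frac{1}{z(z^\alpha+2\mu)}
  -\frac{e^z}{(e^z-1)\big(\psi(z)+\mu(e^z+1)\big)}.
\end{equation*}
I would Taylor-expand near $z=0$, using that $\widehat b(z)=z^{\alpha-3}+\text{(analytic)}$, so that $\psi(z)=z^\alpha+\tfrac12 z^{\alpha+1}+c_0 z^3+\cdots$ with $c_0=(\widehat b(z)-z^{\alpha-3})(0)\neq0$, together with $ze^z/(e^z-1)=1+\tfrac z2+\cdots$. The leading terms of the numerator $M:=\frac{ze^z}{e^z-1}(z^\alpha+2\mu)-(\psi(z)+\mu(e^z+1))$ cancel down to $M=-\tfrac13\mu z^2-c_0 z^3+O(z^{\alpha+2})$, so that, with the denominator lower bounds $|z^\alpha+2\mu|\geqslant C_\alpha(|z|^\alpha+\mu)$ (which follows from \cref{lem:shit-1}) and $|\psi(z)+\mu(e^z+1)|\geqslant C_{\alpha,\mu_0}(\mu+|z|^\alpha)$ from \cref{lem:psi+mu}, one obtains
\begin{equation*}
  |D(z)|\leqslant C_{\alpha,\mu_0}\,\frac{\mu|z|+|z|^2}{(|z|^\alpha+\mu)^2},
  \qquad z\in\Upsilon_1\setminus\{0\}.
\end{equation*}
The survival of the $c_0 z^3$ term (rather than a higher power) is precisely what caps the rate at $3-\alpha$, in agreement with \cref{rem:motivation}.

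Parametrizing $z=re^{\pm i\theta}$ and setting $c:=-\cos\theta>0$, the $\Upsilon_1$-contribution is controlled by $\tau^\alpha\int_0^{\pi/\sin\theta}e^{-ckr}\,|D(re^{i\theta})|\,\mathrm dr$, the bounded factors coming from $\sigma$ and from $e^{kz}$ versus $e^{(k+1)z}$ on the bounded arc $\Upsilon_1$ being harmless. Using $\mu r/(r^\alpha+\mu)^2\leqslant\tfrac14 r^{1-\alpha}$ and $r^2/(r^\alpha+\mu)^2\leqslant r^{2-2\alpha}$ reduces everything to integrals of the type $\int_0^{\cdot}e^{-ckr}r^{2-2\alpha}\,\mathrm dr$, and this is where the trichotomy in $\varepsilon(\alpha,\tau,k)$ is born. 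For $1<\alpha<3/2$ the exponent $2-2\alpha>-1$ makes the integral converge at the origin and yields $k^{2\alpha-3}$; after multiplying by $\tau^\alpha$ and using $k^{2\alpha-3}=t_k^{2\alpha-3}\tau^{3-2\alpha}\leqslant C_\alpha t_{k+1}^{2\alpha-3}\tau^{3-2\alpha}$ this becomes $t_{k+1}^{2\alpha-3}\tau^{3-\alpha}$. The borderline $\alpha=3/2$ produces a logarithm, and cutting off the region $r\lesssim\mu^{1/\alpha}$ via the regularization $r^\alpha+\mu$ converts it into $1+|\ln\tau|$. For $3/2<\alpha<2$ the near-origin part is no longer integrable, but $(r^\alpha+\mu)$ caps it and, since $\mu=\lambda\tau^\alpha/2$ with $\lambda\geqslant1$, the resulting powers of $\lambda$ stay bounded, leaving the clean $\tau^{3-\alpha}$.

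The main obstacle I anticipate is exactly this last case analysis: the power $r^{2-2\alpha}$ fails to be integrable at the origin once $\alpha\geqslant3/2$, so one cannot simply discard $\mu$ as was done in \cref{thm:y-Y-y0}, and the final bound must be shown \emph{uniform} in the eigenvalue $\lambda$ (equivalently in $\mu\in(0,\mu_0]$) by carefully exploiting the scaling $\mu=\lambda\tau^\alpha/2$ and $\lambda\geqslant1$. A secondary, purely technical point is the justification of the contour and summation interchanges and the bookkeeping of the fractional last step encoded by $\sigma$, which I expect to contribute only bounded multiplicative factors and therefore not to affect the rate.
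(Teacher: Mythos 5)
Your decomposition and your treatment of the dominant term coincide with the paper's own proof of \cref{lem:cal_E_t_k}: the paper likewise writes $\mathcal E$ as contour integrals over $\Upsilon$ and $\Upsilon_1$, discards the exponentially small tail over $\Upsilon\setminus\Upsilon_1$, and controls exactly your symbol difference $D(z)$ via the numerator estimate $\snm{\psi(z)+\mu(e^z+1)-z(z^\alpha+2\mu)(1-e^{-z})^{-1}}\leqslant C_{\alpha,\mu_0}(\mu\snm{z}^2+\snm{z}^3)$ (your expansion $M=-\tfrac13\mu z^2-c_0z^3+O(z^{\alpha+2})$ is the sharp form of this) combined with the denominator bound $\snm{z(z^\alpha+2\mu)(\psi(z)+\mu(e^z+1))}>C_{\alpha,\mu_0}\snm{z}(\snm{z}^{2\alpha}+\mu^2)$ from \cref{lem:psi+mu,lem:shit-1}. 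The split of the radial integral at $r=\mu^{1/\alpha}$ and the use of $\mu=\lambda\tau^\alpha/2\geqslant\tau^\alpha/2$ to convert $\mu^{3/\alpha-2}$ into $\tau^{3-2\alpha}$ are also precisely what the paper does, and they yield the trichotomy \cref{eq:varepsilon}.

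There is, however, one genuine gap: the claim that the fractional last step "contributes only bounded multiplicative factors". After your $D(z)$ estimate absorbs the $(e^{kz}-1)$ part, the remaining contribution on $\Upsilon_1$ is
\[
  \frac{\tau^\alpha}{2\pi i}\int_{\Upsilon_1}e^{kz}
  \Big(\frac{e^{\sigma z}-1}{z(z^\alpha+2\mu)}
  -\frac{\sigma e^{z}}{\psi(z)+\mu(e^z+1)}\Big)\,\mathrm dz,
\]
and the two terms in the bracket cannot be estimated separately: each behaves like $\sigma/(\snm{z}^\alpha+\mu)$ near the origin, so separate bounds give at best $C\tau^\alpha\sigma\big(\tau^{1-\alpha}+k^{\alpha-1}\big)$, and $\tau^\alpha k^{\alpha-1}$ exceeds the target $\tau^{3-\alpha}t_{k+1}^{2\alpha-3}=\tau^\alpha(k+1)^{2\alpha-3}$ by a factor of order $k^{2-\alpha}$ when $1<\alpha<3/2$ and $k$ is large (the $\sigma\tau$ piece also fails for fixed $t_{k+1}$ as $\tau\to0$). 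One needs a second cancellation estimate of exactly the type you already perform for $M$, namely $\snm{(e^{\sigma z}-1)(\psi(z)+\mu(e^z+1))-\sigma z(z^\alpha+2\mu)e^z}\leqslant C_{\alpha,\mu_0}\,\sigma(\snm{z}^{\alpha+2}+\mu\snm{z}^2)$, which gives the admissible bound $C_{\alpha,\mu_0}\tau^\alpha\sigma k^{\alpha-2}$; this is the term $\mathbb I_3$ in the paper's proof. The gap is repairable with the technique you already use on $D(z)$, but as written that step would fail.
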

\begin{proof}
  Since the proof of the case $ k = 0$ is simpler, we only prove the case $ k
  \geqslant 1$.
  By \cref{lem:psi-esti,lem:neq0} and the fact that
  \[
    (1-e^{-z})(\psi(z) + \mu(e^z+1)) =
    (1-e^{-(z+2\pi i)})(\psi(z+2\pi i) + \mu(e^{z+2\pi i}+1))
  \]
  for all $ z = x - i\pi $ with $ x \geqslant \pi \cot\theta $, applying Cauchy
  integral theorem yields that
  \[
    \int_{\Upsilon_1} \frac1{ (1-e^{-z})(\psi(z) + \mu(e^z+1)) }
    \, \mathrm{d}z = 0,
  \]
  and using Cauchy integral theorem again gives
  \[
    \int_\Upsilon \frac1{z(z^\alpha+2\mu)} \, \mathrm{d}z = 0.
  \]
  Therefore, from \cref{eq:E,eq:wtE} we have
  \begin{small}
  \begin{align*}
    & \mathcal E(t) =
    \int_0^{t_k} E(s) \, \mathrm{d}s -
    \sum_{j=1}^k \tau^\alpha E_j +
    \int_{t_k}^t E(s) \, \mathrm{d}s - (t-t_k) E_{k+1} \\
    ={} &
    \frac{\tau^\alpha}{2\pi i} \int_\Upsilon
    \frac{e^{kz}}{z(z^\alpha+2\mu)} \, \mathrm{d}z
    - \frac{\tau^\alpha}{2\pi i} \int_{\Upsilon_1}
    \frac{e^{kz} }{(1-e^{-z})(\psi(z) + \mu(e^z+1))} \, \mathrm{d}z \\
    & {} + \frac{\tau^\alpha}{2\pi i} \int_\Upsilon
    \frac{e^{(t/\tau)z}-e^{kz}}{z(z^\alpha+2\mu)} \, \mathrm{d}z -
    \frac{\tau^\alpha}{2\pi i} \int_{\Upsilon_1}
    \frac{(t/\tau-k)e^{(k+1)z}}{\psi(z) + \mu(e^z+1)} \, \mathrm{d}z.
  \end{align*}
  \end{small}
  Inserting $t=t_k$ into above equation yields
  \begin{equation}
    \label{eq:I-123}
    \mathcal E(t_k) = \mathbb I_1 + \mathbb I_2 + \mathbb I_3,
  \end{equation}
  where
  \begin{small}
  \begin{align*}
    \mathbb I_1 &:=
    \frac{\tau^\alpha}{2\pi i} \int_{\Upsilon \setminus \Upsilon_1}
    \frac{e^{t/\tau z}}{z(z^\alpha+2\mu)} \, \mathrm{d}z, \\
    \mathbb I_2 &:= \frac{\tau^\alpha}{2\pi i}
    \int_{\Upsilon_1} \frac{e^{kz}}{z(z^\alpha+2\mu)} -
    \frac{e^{kz} (1-e^{-z})^{-1}}{\psi(z) + \mu(e^z+1)} \, \mathrm{d}z, \\
    \mathbb I_3 &:=
    \frac{\tau^\alpha}{2\pi i} \int_{\Upsilon_1} e^{kz}
    \left(
      \frac{e^{(t/\tau-k)z} - 1}{z(z^\alpha+2\mu)} -
      \frac{(t/\tau-k) e^z}{\psi(z) + \mu(e^z+1)}
    \right) \, \mathrm{d}z.
  \end{align*}
  \end{small}

  It is clear that
  \begin{equation}
    \label{eq:I-1}
    \snm{\mathbb I_1} < C_{\alpha,\mu_0} \tau^\alpha
    \int_{\pi/\sin\theta}^\infty e^{t/\tau r\cos\theta}
    r^{-1-\alpha} \, \mathrm{d}r < C_{\alpha,\mu_0}
    \tau^{\alpha+1} t^{-1} e^{k\pi\cot\theta}.
  \end{equation}
  Let us proceed to estimate $ \mathbb I_2 $. For $ z \in \Upsilon_1 \setminus
  \{0\} $,  a simple calculation yields
  \begin{align*}
    \snm{
      \psi(z) + \mu(e^z+1) - z(z^\alpha+2\mu)(1-e^{-z})^{-1}
    } < C_{\alpha,\mu_0} (\mu \snm{z}^2 + \snm{z}^3),
  \end{align*}
  and \cref{lem:psi+mu,lem:shit-1} imply
  \begin{equation}
    \label{eq:112}
    \snm{
      z(z^\alpha+2\mu)(\psi(z) + \mu(e^z+1))
    } > C_{\alpha,\mu_0} \snm{z}(\snm{z}^{2\alpha} + \mu^2).
  \end{equation}
  Hence, if $ \mu^{1/\alpha} < \pi/\sin\theta $ then
  \begin{small}
  \begin{align*}
    \snm{\mathbb I_2} & < C_{\alpha,\mu_0} \tau^\alpha
    \bigg(
      \int_0^{\mu^{1/\alpha}} e^{kr\cos\theta}
      ( \mu^{-1}r + \mu^{-2} r^2 ) \, \mathrm{d}r + {} \\
      & \qquad\qquad\qquad\qquad \int_{\mu^{1/\alpha}}^{\pi/\sin\theta} e^{kr\cos\theta}
      (\mu r^{1-2\alpha} + r^{2-2\alpha}) \, \mathrm{d}r
    \bigg) \\
    & < C_{\alpha,\mu_0} \tau^\alpha
    \begin{cases}
      \int_0^{\pi/\sin\theta} e^{kr\cos\theta}
      r^{2-2\alpha} \, \mathrm{d}r
      & \text{ if } 1 < \alpha < 3/2, \\
      \int_0^{\mu^{1/\alpha}} \mu^{-1} r + \mu^{-2}r^2 \, \mathrm{d}r +
      \int_{\mu^{1/\alpha}}^{\pi/\sin\theta} r^{1-\alpha} + r^{2-2\alpha} \, \mathrm{d}r
      & \text{ if } 3/2 \leqslant \alpha <2,
    \end{cases} \\
    & < C_{\alpha,\mu_0} \tau^\alpha
    \begin{cases}
      k^{2\alpha-3} & \text{ if } 1 < \alpha < 3/2, \\
      1+\snm{\ln\tau} & \text{ if } \alpha=3/2, \\
      \tau^{3-2\alpha} & \text{ if } 3/2 < \alpha < 2,
    \end{cases}
  \end{align*}
  \end{small}
  and if $ \mu^{1/\alpha} \geqslant \pi/\sin\theta $ then
  \begin{small}
  \begin{align*}
    \snm{\mathbb I_2} & < C_{\alpha,\mu_0} \tau^\alpha
    \int_0^{\pi/\sin\theta} e^{kr\cos\theta}
    (\mu^{-1}r + \mu^{-2} r^2 ) \, \mathrm{d}r \\
    & < C_{\alpha,\mu_0} \tau^\alpha
    \int_0^{\pi/\sin\theta} e^{kr\cos\theta} r \, \mathrm{d}r <
    C_{\alpha,\mu_0} \tau^\alpha k^{-2}.
  \end{align*}
  \end{small}
  Consequently,
  \begin{equation}
    \label{eq:I-2}
    \snm{\mathbb I_2} <
    C_{\alpha,\mu_0} \varepsilon(\alpha,\tau,k) \tau^{3-\alpha}.
  \end{equation}

  Now, let us estimate $ \mathbb I_3 $. For $ z \in \Upsilon_1 \setminus \{0\} $,
  a routine calculation yields
  \begin{align*}
    & \snm{
      (e^{(t/\tau-k)z} - 1)(\psi(z) + \mu(e^z+1)) -
      z(z^\alpha+2\mu)(t/\tau - k) e^z
    } \\
    <{} & C_{\alpha,\mu_0} (t/\tau-k)
    \big( \snm{z}^{\alpha+2} + \mu\snm{z}^2 \big),
  \end{align*}
  so that by \cref{eq:112} we obtain
  \begin{small}
  \begin{align}
    \snm{\mathbb I_3} & < C_{\alpha,\mu_0} \tau^\alpha (t/\tau-k)
    \bigg(
      \int_0^{\min\{\mu^{1/\alpha},\pi/\sin\theta\}} e^{kr\cos\theta}
      (\mu^{-2}r^{\alpha+1} + \mu^{-1} r) \, \mathrm{d}r + {} \notag \\
      & \qquad\qquad \int_{\min\{\mu^{1/\alpha},\pi/\sin\theta\}}^{\pi/\sin\theta}
      e^{kr\cos\theta} (r^{1-\alpha} + \mu r^{1-2\alpha}) \, \mathrm{d}r
    \bigg) \notag \\
    & < C_{\alpha,\mu_0} \tau^\alpha(t/\tau-k)
    \int_0^{\pi/\sin\theta} e^{kr\cos\theta} r^{1-\alpha} \, \mathrm{d}r \notag \\
    & < C_{\alpha,\mu_0} \tau^\alpha (t/\tau-k) k^{\alpha-2}.
    \label{eq:I-3}
  \end{align}
  \end{small}

  Finally, combining \cref{eq:I-123,eq:I-1,eq:I-2,eq:I-3} proves \cref{eq:E-wtE*1} and
  thus concludes the proof.
\end{proof}

\begin{rem} \label{rem:motivation2}
In the above proof,  \begin{align*}
\snm{
	\psi(z) + \mu(e^z+1) - z(z^\alpha+2\mu)(1-e^{-z})^{-1}
} < C_{\alpha,\mu_0} (\mu \snm{z}^2 + \snm{z}^3),
\end{align*} and the term $ \snm{z}^3$ leads to (3-$\alpha$)-order accuracy. If we choose a $\beta$ such that \text{(i.e. ($\widehat \beta(z) -z^{\alpha-3} )=0$)}  \begin{align*}
\snm{
	\Psi(z) + \mu(e^z+1) - z(z^\alpha+2\mu)(1-e^{-z})^{-1}
} < C_{\alpha,\mu_0} (\mu \snm{z}^2 + \snm{z}^{2+\alpha}),
\end{align*} 
then we can obtain 2-order accuracy, where $
\Psi(z) = e^{-z}(e^z-1)^3 \widehat \beta(z)$.
\end{rem}


\begin{thm}
  \label{thm:y-Y-f}
  For each $ k \in \mathbb N_{>0} $, if $ f' \in L^1(0,t_k) $ then
  \begin{equation}
    \label{eq:y-Y-f}
    \begin{aligned}
      \snm{y(t_k) - Y_k} & \leqslant
      C_{\alpha,\mu_0} \tau^{3-\alpha}
      \varepsilon(\alpha,\tau,k) \snm{f(0)} + {} \\
      & \, C_{\alpha,\mu_0} \tau^{3-\alpha}
      \begin{cases}
        \int_0^{t_k} (t_{k+1}-t)^{2\alpha-3}
        \snm{f'(t)} \, \mathrm{d}t & \text{ if }
        1 < \alpha < 3/2, \\
        (1+\snm{\ln\tau}) \nm{f'}_{L^1(0,t_k)} & \text{ if }
        \alpha = 3/2, \\
        \nm{f'}_{L^1(0,t_k)} & \text{ if }
        3/2 < \alpha < 2,
      \end{cases}
    \end{aligned}
  \end{equation}
  where $ \varepsilon(\alpha,\tau,k) $ is defined by \cref{eq:varepsilon}.
\end{thm}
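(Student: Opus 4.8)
The plan is to exploit the integral representations already established, specialized to $ y_0 = y_1 = 0 $, and to reduce everything to the quantity $ \mathcal E $ controlled by \cref{lem:cal_E_t_k}. Setting $ y_0 = y_1 = 0 $ in \cref{eq:y,eq:Y_k} annihilates the contour integrals carrying the initial data, leaving only the convolution terms, so that
\[
  y(t_k) - Y_k = \int_0^{t_k} \big( E(t_k - s) - \widetilde E(t_k - s) \big) f(s) \, \mathrm{d}s .
\]
First I would transfer the implicit derivative from $ E - \widetilde E $ onto $ f $: writing $ f(s) = f(0) + \int_0^s f'(\sigma)\,\mathrm{d}\sigma $, changing variables $ r = t_k - s $, and applying Fubini's theorem to interchange the order of integration (both steps legitimate because $ f' \in L^1(0,t_k) $ and $ E - \widetilde E \in L^1 $ on bounded intervals), the inner integrals collapse to $ \mathcal E $ by its definition $ \mathcal E(t) = \int_0^t (E-\widetilde E)(s)\,\mathrm{d}s $. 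This yields the clean representation
\[
  y(t_k) - Y_k = f(0)\, \mathcal E(t_k)
  + \int_0^{t_k} \mathcal E(t_k - \sigma)\, f'(\sigma) \, \mathrm{d}\sigma ,
\]
whose two terms match exactly the two terms on the right-hand side of \cref{eq:y-Y-f}.

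Next I would insert the pointwise bound \cref{eq:E-wtE*1}. For the first term, $ t_k $ lies in $ (t_{k-1},t_k] $, so \cref{lem:cal_E_t_k} gives $ \snm{\mathcal E(t_k)} < C_{\alpha,\mu_0}\, \varepsilon(\alpha,\tau,k-1)\, \tau^{3-\alpha} $; since $ \varepsilon(\alpha,\tau,k-1) \leqslant C_\alpha\, \varepsilon(\alpha,\tau,k) $ (for $ 1<\alpha<3/2 $ because $ t_k^{2\alpha-3} \leqslant 2\, t_{k+1}^{2\alpha-3} $, and trivially for $ \alpha \geqslant 3/2 $ where $ \varepsilon $ is independent of the index), this reproduces the $ \snm{f(0)} $ contribution. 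For the integral term, in the cases $ \alpha = 3/2 $ and $ 3/2 < \alpha < 2 $ the factor $ \varepsilon(\alpha,\tau,j) $ is independent of the subinterval index $ j $ (equal to $ 1+\snm{\ln\tau} $ and $ 1 $ respectively), so $ \snm{\mathcal E(t_k-\sigma)} < C_{\alpha,\mu_0}\, \varepsilon\, \tau^{3-\alpha} $ holds uniformly in $ \sigma $ and the bound factors out of $ \int_0^{t_k} \snm{f'} $, giving the stated $ \nm{f'}_{L^1(0,t_k)} $ estimates at once.

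The delicate case is $ 1 < \alpha < 3/2 $, and this is where I expect the only genuine bookkeeping. Here $ \varepsilon(\alpha,\tau,j) = t_{j+1}^{2\alpha-3} $ really depends on which subinterval $ t_k-\sigma $ falls into, so I must convert this interval-indexed bound into the continuous weight $ (t_{k+1}-\sigma)^{2\alpha-3} $ appearing in the theorem. The key elementary observation is that if $ t_k-\sigma \in (t_j,t_{j+1}] $ then $ t_{k+1}-\sigma = (t_k-\sigma)+\tau \in (t_{j+1},t_{j+2}] $, whence $ \tfrac12 (t_{k+1}-\sigma) \leqslant t_{j+1} < t_{k+1}-\sigma $; since the exponent $ 2\alpha-3 $ is negative, this gives $ t_{j+1}^{2\alpha-3} \leqslant 2^{3-2\alpha}(t_{k+1}-\sigma)^{2\alpha-3} \leqslant C_\alpha (t_{k+1}-\sigma)^{2\alpha-3} $. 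Consequently $ \snm{\mathcal E(t_k-\sigma)} < C_{\alpha,\mu_0}\,\tau^{3-\alpha}(t_{k+1}-\sigma)^{2\alpha-3} $ for almost every $ \sigma \in (0,t_k) $, and multiplying by $ \snm{f'(\sigma)} $ and integrating produces the weighted $ L^1 $ term. Collecting the three cases completes the proof; throughout, the only subtleties are keeping the index shift in $ \varepsilon $ consistent and justifying the Fubini step under the sole hypothesis $ f' \in L^1(0,t_k) $.
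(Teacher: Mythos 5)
Your proposal is correct and follows essentially the same route as the paper: reduce to $y(t_k)-Y_k=\int_0^{t_k}(E-\widetilde E)(t_k-t)f(t)\,\mathrm{d}t$, write $f(t)=f(0)+\int_0^t f'(s)\,\mathrm{d}s$, apply Fubini to obtain $f(0)\mathcal E(t_k)+\int_0^{t_k}\mathcal E(t_k-t)f'(t)\,\mathrm{d}t$, and invoke \cref{lem:cal_E_t_k}. The only difference is that you explicitly carry out the index-shift bookkeeping for $1<\alpha<3/2$ (converting $t_{j+1}^{2\alpha-3}$ into the weight $(t_{k+1}-\sigma)^{2\alpha-3}$), which the paper leaves implicit in its one-line appeal to the lemma.
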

\begin{proof}
  By \cref{eq:y,eq:Y_k}, a straightforward computation yields that
  \begin{align*}
    y(t_k) - Y_k &= \int_0^{t_k} (E-\widetilde E)(t_k-t) f(t) \, \mathrm{d}s \\
    &= \int_0^{t_k} (E-\widetilde E)(t_k-t)
    \left( f(0) + \int_0^t f'(s) \, \mathrm{d}s \right) \, \mathrm{d}t \notag \\
    &= f(0) \mathcal E(t_k) + \int_0^{t_k} \mathcal E(t_k-t)f'(t) \, \mathrm{d}t
  \end{align*}
  for each $ k \in \mathbb N_{>0} $. Therefore, by \cref{lem:cal_E_t_k} we obtain the theorem.
\end{proof}

\begin{rem}
  \label{rem:mu-infty}
  As pointed out in \cref{rem:phi},  $ \theta \to {(\pi/2)+} $
  as $ \mu_0 \to \infty $. Hence, \cref{eq:esti-I1,eq:I-1} imply that the $
  C_{\alpha,\mu_0} $ in \cref{eq:y-Y} and the $ C_{\alpha,\mu_0} $ in \cref{eq:y-Y-f}
  will both approach infinity as $ \theta \to {(\pi/2)+} $. Analogously, the $
  C_{\alpha,\mu_0} $ in \cref{eq:y-Y-new} will approach infinity as $ \tau^\alpha/h^2
  \to \infty $.
\end{rem}



\subsection{Convergence of the second discretization}
From the proofs of \cref{thm:y-Y-y0,thm:y-Y-f}, it is easily perceived that the fact (cf. \cref{rem:motivation} and \cref{rem:motivation2})
\[
  (\widehat b(z) - z^{\alpha-3})(0) \neq 0
\]
caused $ (3-\alpha) $-order accuracy of the first discretization. This is the
inspiration for the second discretization. Let $ \widehat\beta(z) $ be the discrete
Laplace transform of $ (\beta_k)_{k=0}^\infty $. The definition of the sequence $
(\beta_k)_{k=0}^\infty $ implies
\begin{align*}
  \widehat\beta(z) &= \widehat b(z) + 2\sin(\alpha\pi/2)
  \sum_{k=1}^\infty (2k\pi)^{\alpha-3} \\
  &= \widehat b(z) - \big( \widehat b(z) - z^{\alpha-3} \big)(0)
  \quad\text{(by \cref{eq:wt-b-2}).}
\end{align*}
Hence, $ \big( \widehat\beta(z) - z^{\alpha-3} \big)(0) = 0 $. Finally, by a simple
modification of the proofs of \cref{thm:y-Y-y0,thm:y-Y-f}, we readily obtain the
following error estimate.

\begin{thm}
  \label{thm:y-Y-new}
  For $ k \in \mathbb N_{>0} $,
  \begin{small}
  \begin{equation}
    \label{eq:y-Y-new}
    \snm{y(t_k)\!-\!\mathcal Y_k} \!\leqslant\! C_{\alpha,\mu_0}
    \!\tau^2 \bigg(
      t_k^{-2} \snm{y_0} + t_k^{-1}\! \snm{y_1} +
      t_k^{\alpha-2}\!\snm{f(0)} +
      \int_{0}^{t_k} \! (t_{k+1}-t)^{\alpha-2}
      \snm{f'(t)} \mathrm{d}t
    \bigg).
  \end{equation}
  \end{small}
\end{thm}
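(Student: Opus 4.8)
The plan is to repeat the arguments of \cref{thm:y-Y-y0,thm:y-Y-f} essentially verbatim, with the sequence $(b_k)$, its symbol $\widehat b$, and the function $\psi$ everywhere replaced by $(\beta_k)$, $\widehat\beta$, and $\Psi(z):=e^{-z}(e^z-1)^3\widehat\beta(z)$. The first task is to confirm that the auxiliary estimates carry over. Since $\widehat\beta(z)=\widehat b(z)+(\beta_1-b_1)e^{-z}$ and $(e^z-1)^3=O(\snm z^3)$ near the origin, we have $\Psi(z)=\psi(z)+O(\snm z^3)$, so $\Psi$ shares the leading behaviour $\Psi(re^{i\theta})\sim r^\alpha e^{i\alpha\theta}$ recorded in \cref{eq:limt_0}. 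Consequently the nonvanishing statements of \cref{lem:psi-esti,lem:neq0} and the lower bound $\snm{\Psi(z)+\mu(1+e^z)}>C_{\alpha,\mu_0}(\mu+\snm z^\alpha)$ of \cref{lem:psi+mu} hold for $\Psi$ with identical proofs, the extra $O(\snm z^3)$ term being absorbed into the remainders. With these in hand, the integral representation \cref{eq:Y_k} holds for $\mathcal Y_k$ after the substitutions, and the error splits as in the two theorems.

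For the homogeneous part ($f\equiv0$) I would follow \cref{thm:y-Y-y0} and write $y(t_k)-\mathcal Y_k=\mathbb I_1+\mathbb I_2+\mathbb I_3$, where $\mathbb I_1$ is the exponentially small tail over $\Upsilon\setminus\Upsilon_1$ and $\mathbb I_2$, $\mathbb I_3$ are the $y_0$- and $y_1$-integrals over $\Upsilon_1$. The only genuinely new computation is the difference estimate for $\mathbb I_2$: using the cancellation $(\widehat\beta(z)-z^{\alpha-3})(0)=0$ in a Taylor expansion about $z=0$, the term $\mu\snm z^{3-\alpha}$ appearing in \cref{thm:y-Y-y0} is sharpened to $\mu\snm z^2$, that is (cf. \cref{rem:motivation}),
\[
\snm{\Psi(z)+\mu(1+e^z)-(z+2\mu z^{1-\alpha})\big((e^z-1)^2\widehat\beta(z)-\Psi(z)/2+\mu(e^z-1)/2\big)}<C_\alpha\big(\snm z^{\alpha+2}+\mu\snm z^2+\mu^2\snm z^{2-\alpha}\big).
\]
Dividing by the lower bounds of \cref{lem:psi+mu,lem:shit-1} and checking the two regimes $r<\mu^{1/\alpha}$ and $r>\mu^{1/\alpha}$ shows the integrand is $O(r)$, whence $\snm{\mathbb I_2}\leqslant C_{\alpha,\mu_0}\snm{y_0}\int_0^{\pi/\sin\theta}e^{kr\cos\theta}r\,\mathrm dr\leqslant C_{\alpha,\mu_0}\snm{y_0}k^{-2}=C_{\alpha,\mu_0}\snm{y_0}\tau^2 t_k^{-2}$. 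The same reasoning for $\mathbb I_3$ yields $C_{\alpha,\mu_0}\tau\snm{y_1}k^{-1}=C_{\alpha,\mu_0}\tau^2 t_k^{-1}\snm{y_1}$, which together with the negligible $\mathbb I_1$ produces the first two terms of \cref{eq:y-Y-new}.

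For the inhomogeneous part ($y_0=y_1=0$) I would reprove \cref{lem:cal_E_t_k} with $\Psi$ in place of $\psi$. The improved difference bound from \cref{rem:motivation2}, namely $\snm{\Psi(z)+\mu(e^z+1)-z(z^\alpha+2\mu)(1-e^{-z})^{-1}}<C_{\alpha,\mu_0}(\mu\snm z^2+\snm z^{2+\alpha})$, replaces the offending $\snm z^3$; carrying it through the $\mathbb I_1,\mathbb I_2,\mathbb I_3$ of that lemma and integrating against $e^{kr\cos\theta}r^{1-\alpha}$ gives the single-regime estimate $\snm{\mathcal E(t)}<C_{\alpha,\mu_0}\tau^2 t_{k+1}^{\alpha-2}$ for $t_k<t\leqslant t_{k+1}$, with no logarithm and no case split. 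Finally, the decomposition of \cref{thm:y-Y-f},
\[
y(t_k)-\mathcal Y_k=f(0)\mathcal E(t_k)+\int_0^{t_k}\mathcal E(t_k-t)f'(t)\,\mathrm dt,
\]
together with this $\mathcal E$-bound, delivers the $t_k^{\alpha-2}\snm{f(0)}$ term and the convolution $\int_0^{t_k}(t_{k+1}-t)^{\alpha-2}\snm{f'(t)}\,\mathrm dt$ term, completing \cref{eq:y-Y-new}.

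The main obstacle is the verification of the two sharpened difference estimates. The offending lower-order contributions $\mu\snm z^{3-\alpha}$ and $\snm z^3$ arise precisely from the nonzero value of $\widehat b(z)-z^{\alpha-3}$ at the origin, through products such as $2\mu z^{1-\alpha}\cdot c_0\snm z^2$ with $c_0=(\widehat b-z^{\alpha-3})(0)$; one must expand $\widehat\beta$, $\Psi$, and the elementary factors $e^{\pm z}$, $(e^z-1)^j$ to sufficiently high order near $z=0$, confirm that $c_0=0$ removes exactly these terms, and simultaneously control the resulting expressions uniformly on the bounded arc $\Upsilon_1$ away from the origin, where $\widehat\beta$ and $\Psi$ are analytic and bounded. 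Everything else is a careful transcription of the earlier proofs.
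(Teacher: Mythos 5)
Your proposal is correct and follows exactly the route the paper intends: the paper's own "proof" is the one-line remark that the estimate follows "by a simple modification of the proofs of Theorems \ref{thm:y-Y-y0} and \ref{thm:y-Y-f}," relying on the cancellation $(\widehat\beta(z)-z^{\alpha-3})(0)=0$ and the sharpened difference bounds recorded in Remarks \ref{rem:motivation} and \ref{rem:motivation2}, which is precisely what you carry out. Your version is in fact more detailed than the paper's, and your two regime checks ($r\lessgtr\mu^{1/\alpha}$) confirming the $O(r)$ and $O(r^{1-\alpha})$ integrands are consistent with the claimed $\tau^2 t_k^{-2}$, $\tau^2 t_k^{-1}$ and $\tau^2 t_{k+1}^{\alpha-2}$ rates.
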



\section{Two full discretizations}
\label{sec:main}
Let $ \mathcal K_h $ be a quasi-uniform and shape-regular triangulation of $\Omega $
consisting of $ d $-simplexes, and we use $ h $ to denote the maximum diameter of the
elements in $ \mathcal K_h $. Define
\begin{align*}
  S_h := {}&\left\{
    v_h \in H_0^1(\Omega):\
    v_h|_K \in P_1(K) \quad\forall \,K \in \mathcal K_h
  \right\},
\end{align*}
where $ P_1(K) $ is the set of all linear functions defined on $ K $. Let $ \Delta_h
: S_h \to S_h $ be the usual discrete Laplace operator, namely,
\[
  \dual{-\Delta_h v_h, w_h}_\Omega =
  \dual{\nabla v_h, \nabla w_h}_\Omega
\]
for all $ v_h, w_h \in S_h $. In addition, let $ P_h $ be the $ L^2 $-orthogonal
projection onto $ S_h $.


Assume that $ u_0, u_1 \in L^2(\Omega) $ and
\[
  f \in L^1(0,\infty;L^2(\Omega)) \cap
  {}_0H^{(1-\alpha)/2}(0,\infty;H^{-1}(\Omega)).
\]
Using \cref{discr:ode-1,discr:ode-2} in time and using $ -\Delta_h $ as the
discretization of $ -\Delta $, we obtain two full discretizations of problem
\cref{eq:model} as follows.

\begin{discr}
  \label{discr:1}
  Let $ U_0 = P_hu_0 $; for each $ k \in \mathbb N $, the value of $ U_{k+1} $ is
  determined by
  \begin{small}
  \begin{equation}
    \label{eq:U}
    \begin{aligned}
      & (b_{k+1} - b_k)(U_1-U_0) +
      \sum_{j=1}^k (b_{k-j+1} - b_{k-j}) (U_{j+1} - 2U_j + U_{j-1}) \\
      & \quad {} - \frac{\tau^\alpha}2 \Delta_h (U_k + U_{k+1}) =
      \tau^{\alpha-1} P_h \int_{t_k}^{t_{k+1}} f(t) \, \mathrm{d}t +
      \tau (b_{k+1} - b_k) P_h u_1.
    \end{aligned}
  \end{equation}
  \end{small}
\end{discr}
\begin{discr}
  \label{discr:2}
  Let $ \mathcal U_0 = P_h u_0 $; for each $ k \in \mathbb N $, the value of $
  \mathcal U_{k+1} $ is determined by
  \begin{small}
  \begin{equation}
    \label{eq:new-U}
    \begin{aligned}
      & (\beta_{k+1} - \beta_k) (\mathcal U_1 - \mathcal U_0) +
      \sum_{j=1}^k (\beta_{k-j+1} - \beta_{k-j})
      (\mathcal U_{j+1} - 2\mathcal U_j + \mathcal U_{j-1}) \\
      & \quad {} - \frac{\tau^\alpha}2 \Delta_h
      (\mathcal U_k + \mathcal U_{k+1}) =
      \tau^{\alpha-1} P_h\int_{t_k}^{t_{k+1}} f(t) \, \mathrm{d}t +
      \tau (\beta_{k+1} - \beta_k) P_hu_1.
    \end{aligned}
  \end{equation}
  \end{small}
\end{discr}
\begin{rem}
  We note that \cref{discr:1} has already been analyzed in
  \cite{Luo2018Convergence}, and the following error estimate has been established in
  the case $ u_0 = u_1 = 0 $:
  \begin{small}
  \begin{align*}
    \nm{u(t_k)-U_k}_{H_0^1(\Omega)} \lesssim
    \nm{f}_{L^2(0,t_k;L^2(\Omega))}
    \begin{cases}
      \tau^{(\alpha-1)/2} + h^{1-1/\alpha} &
      \text{ if } 1 < \alpha \leqslant 3/2, \\
      \tau^{(\alpha-1)/2} + \tau^{-1/2} h &
      \text{ if } 3/2 < \alpha < 2,
    \end{cases}
  \end{align*}
  \end{small}
where $ h \leqslant \tau^{\alpha/2} $ if $ 3/2 < \alpha < 2 $.
  This error estimate is optimal with respect to the regularity of $ u $.
\end{rem}

By \cref{lem:Y_k-stability,lem:Y_k-stability-new}, we easily obtain the following
stability estimates of \cref{discr:1,discr:2}.
\begin{thm}
  \label{thm:U-stability}
  For each $ k \in \mathbb N_{>0} $,
  \begin{small}
  \begin{align*}
    \nm{U_k}_{L^2(\Omega)} &\leqslant C_\alpha \big(
      \nm{u_0}_{L^2(\Omega)} +
      t_k^{1-\alpha/2} \nm{u_1}_{\dot H^{-1}(\Omega)} +
      \nm{f}_{{}_0H^{(1-\alpha)/2}(0,t_k;H^{-1}(\Omega))}
    \big), \\
    \nm{\mathcal U_k}_{L^2(\Omega)} &\leqslant C_\alpha \big(
      \nm{u_0}_{L^2(\Omega)} +
      t_k^{1-\alpha/2} \nm{u_1}_{\dot H^{-1}(\Omega)} +
      \nm{f}_{{}_0H^{(1-\alpha)/2}(0,t_k;H^{-1}(\Omega))}
    \big).
  \end{align*}
  \end{small}
\end{thm}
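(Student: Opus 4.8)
The plan is to diagonalize the discrete Laplacian so that each fully discrete scheme decouples into a family of independent scalar L1 recursions of the form \cref{eq:Y} (respectively \cref{eq:new-Y}), and then to apply the one-dimensional stability bound \cref{eq:Y-stability} (respectively \cref{eq:Y-stability-new}) mode by mode. Since $ -\Delta_h $ is symmetric and positive definite on the finite-dimensional space $ S_h $, there is an $ L^2(\Omega) $-orthonormal basis $ \{\phi_i\} $ of $ S_h $ with $ -\Delta_h\phi_i=\lambda_i\phi_i $, $ \lambda_i>0 $. Setting $ Y_k^{(i)}:=\dual{U_k,\phi_i}_\Omega $ and pairing \cref{eq:U} with $ \phi_i $, the spatial term becomes $ \dual{-\tfrac{\tau^\alpha}2\Delta_h(U_k+U_{k+1}),\phi_i}_\Omega=\mu_i(Y_k^{(i)}+Y_{k+1}^{(i)}) $ with $ \mu_i:=\lambda_i\tau^\alpha/2 $, so $ (Y_k^{(i)})_k $ solves exactly \cref{discr:ode-1} with $ \lambda=\lambda_i $, initial data $ y_0=\dual{u_0,\phi_i}_\Omega $ and $ y_1=\dual{u_1,\phi_i}_\Omega $ (using $ U_0=P_hu_0 $ and $ \phi_i\in S_h $), and forcing $ f^{(i)}(t):=\dual{f(t),\phi_i} $. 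The same splitting applied to \cref{eq:new-U} produces \cref{discr:ode-2} in each mode.

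Next I would apply \cref{lem:Y_k-stability} to each mode. Although it is stated for $ \lambda\geqslant1 $, its proof delivers \cref{eq:Y-stability} with $ C_\alpha $ independent of $ \lambda $, so it is valid for every eigenvalue $ \lambda_i>0 $; hence
\begin{equation*}
  \snm{Y_k^{(i)}}\leqslant C_\alpha\Big(\snm{\dual{u_0,\phi_i}_\Omega}+\lambda_i^{-1/2}\big(t_k^{1-\alpha/2}\snm{\dual{u_1,\phi_i}_\Omega}+\nm{f^{(i)}}_{{}_0H^{(1-\alpha)/2}(0,t_k)}\big)\Big).
\end{equation*}
Squaring, using $ (a+b+c)^2\leqslant3(a^2+b^2+c^2) $, summing over $ i $, and invoking the Parseval identity $ \nm{U_k}_{L^2(\Omega)}^2=\sum_i\snm{Y_k^{(i)}}^2 $ gives
\begin{equation*}
  \nm{U_k}_{L^2(\Omega)}^2\leqslant C_\alpha\Big(\sum_i\snm{\dual{u_0,\phi_i}_\Omega}^2+t_k^{2-\alpha}\sum_i\lambda_i^{-1}\snm{\dual{u_1,\phi_i}_\Omega}^2+\sum_i\lambda_i^{-1}\nm{f^{(i)}}_{{}_0H^{(1-\alpha)/2}(0,t_k)}^2\Big).
\end{equation*}

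It then remains to recognize the three sums. The first equals $ \nm{P_hu_0}_{L^2(\Omega)}^2\leqslant\nm{u_0}_{L^2(\Omega)}^2 $, and the second is the squared discrete norm $ \nm{(-\Delta_h)^{-1/2}P_hu_1}_{L^2(\Omega)}^2 $. Because $ S_h\subset H_0^1(\Omega) $, the variational characterization of the dual norm yields the $ h $-uniform contraction $ \nm{(-\Delta_h)^{-1/2}P_hv}_{L^2(\Omega)}\leqslant\nm{v}_{\dot H^{-1}(\Omega)} $ (the supremum defining the discrete norm runs over the smaller set $ S_h $), which settles the $ u_1 $-term. For the third sum I would use that $ -\Delta_h $, and hence the whole spectral decomposition, is independent of $ t $: computing $ \nm{\phi_i}_{\dot H_h^{-1}}^2=\lambda_i^{-1} $ shows the sum equals $ \nm{f}_{{}_0H^{(1-\alpha)/2}(0,t_k;\dot H_h^{-1})}^2 $, where $ \dot H_h^{-1} $ is $ S_h $ with the $ (-\Delta_h)^{-1/2} $-norm, and the $ h $-uniform embedding of $ H^{-1}(\Omega) $ into $ \dot H_h^{-1} $ controls this by $ \nm{f}_{{}_0H^{(1-\alpha)/2}(0,t_k;H^{-1}(\Omega))}^2 $. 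Taking square roots proves the bound for $ U_k $, and repeating the argument verbatim with \cref{lem:Y_k-stability-new} and \cref{eq:new-U} gives the identical bound for $ \mathcal U_k $.

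The step I expect to require the most care is the last one: justifying that the mode-by-mode weighted sum of temporal fractional-order norms coincides with the Bochner norm $ \nm{f}_{{}_0H^{(1-\alpha)/2}(0,t_k;\dot H_h^{-1})} $ and comparing it with the continuous $ H^{-1} $-valued norm. Since $ (1-\alpha)/2<0 $, the space $ {}_0H^{(1-\alpha)/2} $ is defined by duality from the positive-order interpolation space $ {}^0H^{(\alpha-1)/2} $, so commuting the time-independent spatial decomposition through it is not purely formal: one must verify that $ L^2(0,t_k;\dot H_h^{-1}) $ and $ {}_0H^1(0,t_k;\dot H_h^{-1}) $ split orthogonally along the basis, that the real interpolation with parameter $ 2 $ preserves this splitting, and then, by the duality defining the negative-order spaces, that the corresponding contraction from $ H^{-1}(\Omega) $ into $ \dot H_h^{-1} $ lifts to the interpolated and dual temporal spaces with the constant preserved. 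Everything else reduces to orthogonality and the already-established scalar estimates.
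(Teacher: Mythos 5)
Your proposal is correct and follows essentially the route the paper intends: the paper offers no detailed proof of \cref{thm:U-stability} beyond invoking \cref{lem:Y_k-stability,lem:Y_k-stability-new}, and the implicit argument is exactly your eigen-decomposition of $-\Delta_h$ with the scalar stability bounds applied mode by mode (the same decomposition the paper spells out later for the convergence theorems). Your observations that the constant in \cref{eq:Y-stability} is independent of $\lambda$ and that the passage from the weighted sum of scalar $ {}_0H^{(1-\alpha)/2} $-norms of the modes to the $H^{-1}(\Omega)$-valued Bochner norm of $f$ needs justification are both valid points that the paper glosses over entirely.
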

\begin{rem}
 Since we do not use Laplace transform technique in the proof of \cref{lem:Y_k-stability},  the first stability estimate in the above theorem does not require the temporal
  grid to be uniform. We also note that the stability estimate in \cite[Theorem
  3.2]{Sun2006} essentially requires the initial value to be continuously
  differentiable.
\end{rem}

The main task of the rest of this section is to establish the convergence of
\cref{discr:1,discr:2}. To this end, we first introduce the following conventions: $
a \lesssim b $ means that there exists a positive constant $ C $ depending only on $
\alpha $, $ \Omega $, the shape regularity of $ \mathcal K_h $ or $ h_\text{min}^{-2}
\tau^\alpha, $ such that $ a \leqslant C b $, where $ h_\text{min} $ is the minimum
diameter of the elements in $ \mathcal K_h $. Then let us consider the error estimate
of the following spatial semidiscretization of problem \cref{eq:model}:
\begin{equation}
  \label{eq:semidiscr}
  \D_{0+}^{\alpha-1}(u_h' - P_hu_1)(t) -
  \Delta_h u_h(t) = P_h f(t), \quad t > 0,
\end{equation}
subjected to the initial value condition $ u_h(0) = P_hu_0 $.
\begin{lem}
  \label{lem:space}
  If $ u_0, u_1 \in L^2(\Omega) $ and $ f \in L^\infty(0,\infty,L^2(\Omega)) $, then
  \begin{equation}
    \begin{aligned}
      \nm{(u-u_h)(t)}_{L^2(\Omega)} & \lesssim
      h^2 \Big(
        t^{-\alpha} \nm{u_0}_{L^2(\Omega)} +
        t^{1-\alpha} \nm{u_1}_{L^2(\Omega)} \\
        & \qquad\qquad\qquad {} + (1+\snm{\ln h})
        \nm{f}_{L^\infty(0,t;L^2(\Omega))}
      \Big)
    \end{aligned}
  \end{equation}
  for each $ t > 0 $.
\end{lem}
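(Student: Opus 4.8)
The plan is to represent both $u$ and $u_h$ through operator-valued contour integrals and to reduce the error to a single uniform finite-element resolvent estimate. Writing $A:=-\Delta$ (with domain $H^2(\Omega)\cap H_0^1(\Omega)$) and $A_h:=-\Delta_h$, a Laplace transform of \cref{eq:model} gives $(z^\alpha+A)\widehat u(z)=z^{\alpha-1}u_0+z^{\alpha-2}u_1+\widehat f(z)$, and of \cref{eq:semidiscr} the identical relation with $A_h$, $P_hu_0$, $P_hu_1$, $P_hf$. Inverting the transform and deforming the Bromwich line onto a Hankel contour $\Upsilon$ of the type used in \cref{sec:ode}, with half-angle $\theta\in(\pi/2,\pi/\alpha)$ so that $\cos\theta<0$ along the rays while $\operatorname{Arg}(z^\alpha)=\pm\alpha\theta<\pi$ keeps $z^\alpha+A$ boundedly invertible, I obtain
$$u(t)=E_0(t)u_0+E_1(t)u_1+\int_0^t\bar E(t-s)f(s)\,\mathrm ds,$$
where $E_0(t)=\tfrac1{2\pi i}\int_\Upsilon e^{zt}z^{\alpha-1}(z^\alpha+A)^{-1}\,\mathrm dz$, $E_1(t)$ is the same with $z^{\alpha-2}$, and $\bar E(t)$ is the same with the bare resolvent; $u_h(t)$ has the identical form with $A_h$ and $P_h$ inserted. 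The error then splits into three contour integrals of the resolvent difference $R(z):=(z^\alpha+A)^{-1}-(z^\alpha+A_h)^{-1}P_h$.

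The analytic heart of the proof is the uniform estimate
$$\nm{R(z)g}_{L^2(\Omega)}\lesssim\min\bigl(h^2,\snm z^{-\alpha}\bigr)\nm g_{L^2(\Omega)},\qquad z\in\Upsilon\setminus\{0\}.$$
The factor $\snm z^{-\alpha}$ is immediate from $\nm{(z^\alpha+A)^{-1}}+\nm{(z^\alpha+A_h)^{-1}}\lesssim\snm z^{-\alpha}$, valid because $z^\alpha$ lies in a fixed sector about the positive real axis of half-angle $\alpha\theta<\pi$, so both shifted forms are sectorial. For the $h^2$ factor I set $v=(z^\alpha+A)^{-1}g$ and $v_h=(z^\alpha+A_h)^{-1}P_hg$, use Galerkin orthogonality together with the coercivity $\snm{z^\alpha\nm v^2+\nm{\nabla v}^2}\gtrsim\snm z^\alpha\nm v^2+\nm{\nabla v}^2$ to get the energy bound $\nm{\nabla(v-v_h)}_{L^2(\Omega)}\lesssim h\nm g_{L^2(\Omega)}$, and then run an Aubin--Nitsche duality argument on the adjoint shifted problem. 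The convexity of $\Omega$ supplies the regularity $\nm v_{H^2(\Omega)}\lesssim\nm{Av}_{L^2(\Omega)}\lesssim\nm g_{L^2(\Omega)}$ that makes the $h^2$ rate appear. I expect \emph{this} step to be the main obstacle: the complex shift $z^\alpha$ must be carried through both the energy estimate and the duality so that the bound holds uniformly over the whole contour $\Upsilon$, not just for a fixed $z$.

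With the resolvent estimate in hand, the three pieces are bounded by inserting it under the contour integrals. For the initial data, $\nm{(E_0-E_{0,h}P_h)(t)u_0}_{L^2(\Omega)}\lesssim h^2\nm{u_0}_{L^2(\Omega)}\int_0^\infty e^{rt\cos\theta}r^{\alpha-1}\,\mathrm dr\lesssim h^2t^{-\alpha}\nm{u_0}_{L^2(\Omega)}$, and likewise the $u_1$-term is $\lesssim h^2t^{1-\alpha}\nm{u_1}_{L^2(\Omega)}$, both radial integrals converging at $r=0$ since $\alpha>1$. The source term is the only one generating the logarithm: bounding $\nm{(u-u_h)(t)}_{L^2(\Omega)}\le\nm f_{L^\infty(0,t;L^2(\Omega))}\int_0^t\nm{\bar E(\sigma)-\bar E_h(\sigma)P_h}\,\mathrm d\sigma$ and using the \emph{full} bound $\min(h^2,\snm z^{-\alpha})$, I interchange the $\sigma$- and contour integrations to reach $\int_0^\infty\min\!\bigl(t,(cr)^{-1}\bigr)\min(h^2,r^{-\alpha})\,\mathrm dr$ with $c=-\cos\theta>0$. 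Splitting this integral at $r=h^{-2/\alpha}$ shows that the intermediate region contributes $\sim h^2\ln(1/h)$, which is exactly the origin of the factor $1+\snm{\ln h}$, while the two remaining regions contribute $\mathcal O(h^2)$. Assembling the three estimates yields the claimed bound.
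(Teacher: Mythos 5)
Your proposal is correct and follows essentially the same route as the paper: the error is written as a contour integral of the resolvent difference $(z^\alpha-\Delta)^{-1}-(z^\alpha-\Delta_h)^{-1}P_h$, bounded by combining the $\mathcal O(h^2)$ finite-element resolvent estimate with the uniform sectorial bound $\lesssim(1+\snm{z}^\alpha)^{-1}$, and the logarithm arises from the crossover between the two bounds. The only differences are matters of bookkeeping: the paper cites \cite{Jin2016} for the initial-data terms and \cite{Lubich1996} for the $h^2$ resolvent estimate where you re-derive them, and it extracts the $(1+\snm{\ln h})$ factor by splitting the time integral at $s=h^2$ rather than, as you do after interchanging the integrals, splitting the radial frequency integral at $r=h^{-2/\alpha}$ --- the two computations are equivalent.
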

\begin{proof}
 For $f=0$, \cite[Theorem 3.2]{Jin2016} implies
  \[
    \nm{(u-u_h)(t)}_{L^2(\Omega)} \lesssim
    h^2 \big(
      t^{-\alpha} \nm{u_0}_{L^2(\Omega)} +
      t^{1-\alpha} \nm{u_1}_{L^2(\Omega)}
    \big),
  \]
  it suffices to prove, for $u_0=u_1=0$, that
  \begin{equation}
    \label{eq:spatial_f}
    \nm{(u-u_h)(t)}_{L^2(\Omega)} \lesssim
    (1+\snm{\ln h}) h^2 \nm{f}_{L^\infty(0,t;L^2(\Omega))},
  \end{equation}
  which is an improvement of \cite[Theorem 3.3]{Jin2016}. To this end, we proceed as
  follows. Similar to \cite[Equation (25)]{Lubich1996}, we have
  \begin{align*}
    (u-u_h)(t) = \int_0^t \frac1{2\pi i}
    \int_\Upsilon e^{sz} \big(
      (z^\alpha-\Delta)^{-1} - (z^\alpha-\Delta_h)^{-1} P_h)
    \big) \, \mathrm{d}z f(t-s) \, \mathrm{d}s,
  \end{align*}
  where $ \Upsilon $ is defined in \cref{sec:ode}. The proof of \cite[Theorem
  2.1]{Lubich1996} proves that
  \[
    \nm{
      (z^\alpha-\Delta)^{-1} - (z^\alpha-\Delta_h)^{-1}P_h
    }_{\mathcal L(L^2(\Omega))} \lesssim h^2,
    \quad \forall z \in \Upsilon \setminus \{0\},
  \]
  and hence
  \begin{small}
  \begin{align*}
    \Nm{
      \int_\Upsilon e^{sz} \big(
        (z^\alpha-\Delta)^{-1} - (z^\alpha-\Delta_h)^{-1} P_h
      \big) \, \mathrm{d}z
    }_{\mathcal L(L^2(\Omega))} \lesssim s^{-1} h^2.
  \end{align*}
  \end{small}
  We also have
  \begin{small}
  \begin{align*}
    \Nm{
      \int_\Upsilon e^{sz} \big(
        (z^\alpha-\Delta)^{-1} - (z^\alpha-\Delta_h)^{-1} P_h
      \big) \, \mathrm{d}z
    }_{\mathcal L(L^2(\Omega))} \lesssim 1,
  \end{align*}
  \end{small}
  by the fact that, for $ z \in \Upsilon \setminus \{0\} $,
  \begin{align*}
    \nm{ (z^\alpha-\Delta)^{-1} }_{\mathcal L(L^2(\Omega))}
    & \lesssim (1+\snm{z}^\alpha)^{-1}, \\
    \nm{ (z^\alpha-\Delta_h)^{-1} }_{\mathcal L(L^2(\Omega))}
    & \lesssim (1+\snm{z}^\alpha)^{-1}.
  \end{align*}
  Therefore, if $ h^2 < t $ then
  \begin{align*}
    \nm{(u-u_h)(t)}_{L^2(\Omega)} & \lesssim
    \int_0^{h^2} \nm{f(t-s)}_{L^2(\Omega)} \, \mathrm{d}s +
    \int_{h^2}^t s^{-1} h^2  \nm{f(t-s)}_{L^2(\Omega)} \, \mathrm{d}s \\
    & \lesssim h^2(1 + \snm{\ln h}) \nm{f}_{L^\infty(0,t;L^2(\Omega))},
  \end{align*}
  and if $ t \leqslant h^2 $ then
  \begin{align*}
    \nm{(u-u_h)(t)}_{L^2(\Omega)}  \lesssim
    \int_0^t \nm{f(t-s)}_{L^2(\Omega)} \, \mathrm{d}s
    \lesssim h^2\nm{f}_{L^\infty(0,t;L^2(\Omega))}.
  \end{align*}
  This proves \cref{eq:spatial_f} and thus concludes the proof.
\end{proof}
\begin{rem}
  Since
  \begin{small}
  \begin{align*}
    u' - \Delta \D_{0+}^{1-\alpha} u = \D_{0+}^{1-\alpha} f, \\
    u_h' - \Delta_h \D_{0+}^{1-\alpha} u = \D_{0+}^{1-\alpha} P_hf,
  \end{align*}
  \end{small}
  we have
  \begin{align*}
    \dual{u'(t) - u_h'(t), v_h}_\Omega +
    \dual{\nabla \D_{0+}^{1-\alpha}(u-u_h)(t), \nabla v_h}_\Omega = 0
  \end{align*}
  for all $ v_h \in S_h $. Then, by the techniques used in \cref{lem:Y_k-stability},
  a standard energy argument yields
  \begin{small}
  \[
    \nm{(u-u_h)(t)}_{L^2(\Omega)} \leqslant
    2 \nm{(I-R_h)u'}_{L^1(0,t;L^2(\Omega))} +
    \nm{u(t) - R_hu(t)}_{L^2(\Omega)},\quad
    t > 0,
  \]
  \end{small}
  where $ R_h: H_0^1(\Omega) \to S_h $ is defined by that, for each $ v \in
  H_0^1(\Omega) $,
  \[
    \int_\Omega \nabla(v-R_hv) \cdot \nabla w_h = 0
    \quad\text{ for all } w_h \in S_h.
  \]
  We can also use this estimate to analyze the convergence of \cref{eq:semidiscr} in
  $ L^2(\Omega) $-norm with nonsmooth data.
\end{rem}

Finally, let us give the error estimates of \cref{discr:1}. By triangle inequality, we have
	\[ \nm{(u-U)(t)}_{L^2(\Omega)} \leqslant \nm{(u-u_h)(t)}_{L^2(\Omega)}+\nm{(u_h-U)(t)}_{L^2(\Omega)}, \quad   \text{for} \ 0<t \leqslant T,\]
	where $U:=\sum_{k=0}^{\infty} U_k \varphi_k$ and $\varphi_k$ is the hat function at node $t_k$.
	The estimate of $\nm{(u-u_h)(t)}_{L^2(\Omega)}$  already exists (cf. \cref{lem:space}), and hence we only need to give the estimate of $\nm{(u_h-U)(t)}_{L^2(\Omega)}$.
	For $i=1,2,\cdots,N,$ let $(\phi_i,\lambda_i)$ be the eigen-pair of the operator $-\Delta_h$. We have $$u_h=\sum_{i=1}^{N} \dual{u_h,\phi_i}_{\Omega}\phi_i, \quad U=\sum_{i=1}^{N} \dual{U,\phi_i}_{\Omega}\phi_i.$$
	It is easy to verify that $u_h^i:=\dual{u_h,\phi_i}_\Omega$  with $u_h^i(0)=\dual{u_0,\phi_i}_{\Omega}$ satisfies that   \[
	\D^{\alpha-1}_{0+} ((u_h^i)'-u_1^i) + \lambda_i u_h^i =f_i, \quad \text{for} \ i=1,2, \cdots, N,
	\]
	where $f_i=\dual{f,\phi_i}_{\Omega}$ and $u_1^i=\dual{u_1,\phi_i}_{\Omega}$. Letting $U^i:=\dual{U,\phi_i}_\Omega$, by  \cref{thm:y-Y-y0,thm:y-Y-f} we can obtain the error estimates between $u_h^i$ and $U^i$, and hence the error estimates between $u_h$ and $U$. By \cref{thm:y-Y-new}, the error estimates of  \cref{discr:2} follows similarly. By the above procedure, 
	we have the following two theorems.
	%

\begin{thm}
  \label{thm:conv}
  For $ k \in \mathbb N_{>0} $, if $ f' \in L^1(0,t_k;L^2(\Omega)) $ then
  \begin{small}
  \begin{equation}
    \label{eq:conv}
    \begin{aligned}
      & \nm{u(t_k) - U_k}_{L^2(\Omega)} \\
      \lesssim{} &
      \big(
        t_k^{\alpha-3} \tau^{3-\alpha} +
        t_k^{-\alpha} h^2
      \big) \nm{u_{0}}_{L^2(\Omega)} +
      \big(
        t_k^{\alpha-2} \tau^{3-\alpha} +
        t_k^{1-\alpha} h^2
      \big) \nm{u_1}_{L^2(\Omega)} \\
      & {} +
      \tau^{3-\alpha} \varepsilon(\alpha,\tau,k) \nm{f(0)}_{L^2(\Omega)} +
      (1+\snm{\ln h}) h^2 \nm{f}_{L^\infty(0,t_k;L^2(\Omega))} \\
      & {} +
      \tau^{3-\alpha}
      \begin{cases}
        \int_0^{t_k} (t_{k+1}-t)^{2\alpha-3}
        \nm{f'(t)}_{L^2(\Omega)} \, \mathrm{d}t
        & \text{ if } 1 < \alpha < 3/2, \\
        (1+\snm{\ln\tau}) \nm{f'}_{L^1(0,t_k;L^2(\Omega))}
        & \text{ if } \alpha = 3/2, \\
        \nm{f'}_{L^1(0,t_k;L^2(\Omega))}
        & \text{ if } 3/2 < \alpha < 2,
      \end{cases}
    \end{aligned}
  \end{equation}
  \end{small}
  where $ \varepsilon(\alpha,\tau,k) $ is defined by \cref{eq:varepsilon}.
\end{thm}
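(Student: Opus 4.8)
The plan is to split the error by the triangle inequality into a spatial part and a temporal part,
\[
  \nm{u(t_k)-U_k}_{L^2(\Omega)} \leqslant
  \nm{(u-u_h)(t_k)}_{L^2(\Omega)} + \nm{(u_h-U)(t_k)}_{L^2(\Omega)},
\]
where $u_h$ solves the semidiscretization \cref{eq:semidiscr}. The spatial part is furnished directly by \cref{lem:space}: it produces the $t_k^{-\alpha}h^2\nm{u_0}_{L^2(\Omega)}$, $t_k^{1-\alpha}h^2\nm{u_1}_{L^2(\Omega)}$, and $(1+\snm{\ln h})h^2\nm{f}_{L^\infty(0,t_k;L^2(\Omega))}$ terms of \cref{eq:conv}, so all the remaining work lies in the temporal part $\nm{(u_h-U)(t_k)}_{L^2(\Omega)}$.

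For the temporal part I would diagonalize in the orthonormal eigenbasis $\{\phi_i\}_{i=1}^N$ of $-\Delta_h$, writing $u_h^i := \dual{u_h,\phi_i}_\Omega$ and $U_k^i := \dual{U_k,\phi_i}_\Omega$. Testing \cref{eq:semidiscr} and \cref{eq:U} against $\phi_i$ decouples them into the scalar fractional ODE \cref{eq:ode-y} with $\lambda = \lambda_i$ and data $y_0 = \dual{u_0,\phi_i}_\Omega$, $y_1 = \dual{u_1,\phi_i}_\Omega$, $f = f_i := \dual{f,\phi_i}_\Omega$, the discrete solution $U_k^i$ being exactly \cref{discr:ode-1} for these data. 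Applying \cref{thm:y-Y-y0} to the initial-value contributions and \cref{thm:y-Y-f} to the source contribution then yields, for each $i$, a componentwise bound of the form
\[
  \snm{u_h^i(t_k)-U_k^i} \leqslant C_{\alpha,\mu_0}\tau^{3-\alpha}
  \Big( t_k^{\alpha-3}\snm{y_0} + t_k^{\alpha-2}\snm{y_1}
  + \varepsilon(\alpha,\tau,k)\snm{f_i(0)}
  + \textstyle\int_0^{t_k} w(t)\snm{f_i'(t)}\,\mathrm{d}t \Big),
\]
where $w$ is the weight appearing in \cref{eq:y-Y-f}.

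The summation over $i$ is then Parseval together with Minkowski's inequality. By orthonormality, $\nm{(u_h-U)(t_k)}_{L^2(\Omega)}^2 = \sum_i \snm{u_h^i(t_k)-U_k^i}^2$; the initial-value terms sum cleanly through $\sum_i \snm{\dual{u_0,\phi_i}_\Omega}^2 = \nm{P_hu_0}_{L^2(\Omega)}^2 \leqslant \nm{u_0}_{L^2(\Omega)}^2$ and the analogue for $u_1$, reproducing the $t_k^{\alpha-3}\tau^{3-\alpha}$ and $t_k^{\alpha-2}\tau^{3-\alpha}$ terms. For the source, the $\ell^2$ triangle inequality followed by Minkowski's integral inequality moves the $\ell^2$-sum inside the time integral, turning $\big(\sum_i(\int_0^{t_k} w\snm{f_i'})^2\big)^{1/2}$ into $\int_0^{t_k} w(t)\nm{P_hf'(t)}_{L^2(\Omega)}\,\mathrm{d}t$ and finally into the $\int_0^{t_k} w(t)\nm{f'(t)}_{L^2(\Omega)}\,\mathrm{d}t$ of \cref{eq:conv}; the $\snm{f_i(0)}$ term sums through Parseval exactly as the initial-value contributions, producing $\tau^{3-\alpha}\varepsilon(\alpha,\tau,k)\nm{f(0)}_{L^2(\Omega)}$.

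The main obstacle --- and the reason the $\lesssim$ in \cref{eq:conv} conceals a dependence on $h_{\min}^{-2}\tau^\alpha$ --- is that the single constant $C_{\alpha,\mu_0}$ coming from \cref{thm:y-Y-y0,thm:y-Y-f} must serve every eigenvalue at once. Those theorems require $\mu = \lambda\tau^\alpha/2 \leqslant \mu_0$, and here $\mu_i = \lambda_i\tau^\alpha/2$ with $\lambda_N \lesssim h_{\min}^{-2}$ by the inverse inequality; hence $\mu_i \leqslant \mu_0$ for all $i$ is precisely the boundedness of $\tau^\alpha/h_{\min}^2$. Under this condition the componentwise constant is uniform in $i$ and the summation closes. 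The blow-up of $C_{\alpha,\mu_0}$ as $\mu_0 \to \infty$ recorded in \cref{rem:mu-infty} is then exactly the deterioration of the error constant as $\tau^\alpha/h_{\min}^2$ grows.
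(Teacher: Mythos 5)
Your proposal is correct and follows essentially the same route as the paper: a triangle-inequality split into the semidiscrete error (handled by \cref{lem:space}) and the temporal error, which is diagonalized in the eigenbasis of $-\Delta_h$ so that \cref{thm:y-Y-y0,thm:y-Y-f} apply componentwise with $\lambda=\lambda_i$, the condition $\mu_i\leqslant\mu_0$ being exactly the boundedness of $\tau^\alpha/h_{\min}^2$. Your explicit Parseval/Minkowski summation and the remark on the uniformity of $C_{\alpha,\mu_0}$ over the spectrum simply spell out details the paper leaves implicit.
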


\begin{thm}
  \label{thm:conv-new}
  For $ k \in \mathbb N_{>0} $, if $ f' \in L^1(0,t_k;L^2(\Omega)) $ then
  \begin{equation}
    \label{eq:conv-new}
    \begin{aligned}
      \nm{u(t_k) - \mathcal U_k}_{L^2(\Omega)}
      & \lesssim \big(\tau^2 t_k^{-2} + t_k^{-\alpha} h^2\big)
      \nm{u_0}_{L^2(\Omega)} + \big(
        \tau^2 t_k^{-1} + t_k^{1-\alpha} h^2
      \big) \nm{u_1}_{L^2(\Omega)} \\
      & \quad{} + \tau^2 t_k^{\alpha-2} \nm{f(0)}_{L^2(\Omega)} +
      (1+\snm{\ln h})h^2 \nm{f}_{L^\infty(0,t_k;L^2(\Omega))} \\
      & \quad{} + \tau^2 \int_0^{t_k}
      (t_{k+1}-t)^{\alpha-2} \nm{f'(t)}_{L^2(\Omega)} \, \mathrm{d}t.
    \end{aligned}
  \end{equation}
\end{thm}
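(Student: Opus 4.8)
The plan is to follow verbatim the two-step strategy the paper has already set up for \cref{discr:1}, replacing the scalar estimate of \cref{thm:y-Y-f} by the sharper $\tau^2$ estimate of \cref{thm:y-Y-new}. First I would split the error by the triangle inequality,
\[
  \nm{(u-\mathcal U)(t_k)}_{L^2(\Omega)} \leqslant
  \nm{(u-u_h)(t_k)}_{L^2(\Omega)} +
  \nm{(u_h-\mathcal U)(t_k)}_{L^2(\Omega)},
\]
where $\mathcal U := \sum_{k=0}^\infty \mathcal U_k \varphi_k$ and $\varphi_k$ is the hat function at $t_k$. The spatial semidiscretization error $\nm{(u-u_h)(t_k)}_{L^2(\Omega)}$ is bounded directly by \cref{lem:space}, which supplies exactly the three $h^2$-terms $t_k^{-\alpha}\nm{u_0}_{L^2(\Omega)}$, $t_k^{1-\alpha}\nm{u_1}_{L^2(\Omega)}$ and $(1+\snm{\ln h})\nm{f}_{L^\infty(0,t_k;L^2(\Omega))}$ appearing in \cref{eq:conv-new}; no new work is required there.

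The remaining task is the temporal error $\nm{(u_h-\mathcal U)(t_k)}_{L^2(\Omega)}$, which I would handle by diagonalizing $-\Delta_h$. Writing $(\phi_i,\lambda_i)_{i=1}^N$ for its eigenpairs and setting $u_h^i := \dual{u_h,\phi_i}_\Omega$, $\mathcal U^i := \dual{\mathcal U,\phi_i}_\Omega$, $u_0^i := \dual{u_0,\phi_i}_\Omega$, $u_1^i := \dual{u_1,\phi_i}_\Omega$ and $f_i := \dual{f,\phi_i}_\Omega$, each scalar component $u_h^i$ solves the fractional ordinary equation \cref{eq:ode-y} with $\lambda = \lambda_i$ and data $(u_0^i, u_1^i, f_i)$, while $\mathcal U^i$ is precisely its \cref{discr:ode-2} approximation. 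Applying \cref{thm:y-Y-new} componentwise gives
\begin{align*}
  \snm{u_h^i(t_k) - \mathcal U_k^i}
  \leqslant{} & C_{\alpha,\mu_0} \tau^2 \Big(
    t_k^{-2}\snm{u_0^i} + t_k^{-1}\snm{u_1^i} +
    t_k^{\alpha-2}\snm{f_i(0)} \\
    & \qquad\qquad {} +
    \int_0^{t_k}(t_{k+1}-t)^{\alpha-2}\snm{f_i'(t)}\,\mathrm{d}t
  \Big),
\end{align*}
and Parseval's identity gives $\nm{(u_h-\mathcal U)(t_k)}_{L^2(\Omega)}^2 = \sum_i \snm{u_h^i(t_k)-\mathcal U_k^i}^2$.

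The crucial point, and the main obstacle, is that the constant $C_{\alpha,\mu_0}$ in \cref{thm:y-Y-new} depends on $\mu_0$ through the angle $\theta_{\alpha,\mu_0}$ (cf. \cref{rem:mu-infty}), so to sum the componentwise bounds I must use one \emph{uniform} $\mu_0$ for all $i$. Since $\mu_i = \lambda_i\tau^\alpha/2$ and the largest eigenvalue of $-\Delta_h$ is $\mathcal O(h_{\min}^{-2})$ by the inverse inequality, the standing hypothesis that $h_{\min}^{-2}\tau^\alpha$ is bounded (encoded in $\lesssim$) guarantees $\sup_i \mu_i \leqslant \mu_0$ for a fixed $\mu_0$, so a single constant governs every component. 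With this in hand I would sum in $\ell^2$, using $\sum_i\snm{u_0^i}^2 = \nm{P_hu_0}_{L^2(\Omega)}^2 \leqslant \nm{u_0}_{L^2(\Omega)}^2$ and likewise for $u_1$ and $f(0)$; since the prefactors $t_k^{-2}$, $t_k^{-1}$, $t_k^{\alpha-2}$ are independent of $i$, these terms reproduce the first line of \cref{eq:conv-new}. The only genuinely nonroutine estimate is the $f'$-term, where I would invoke Minkowski's integral inequality to move the $\ell^2$-sum inside the time integral,
\[
  \Big( \sum_i \Big(
    \int_0^{t_k}(t_{k+1}-t)^{\alpha-2}\snm{f_i'(t)}\,\mathrm{d}t
  \Big)^2 \Big)^{1/2}
  \leqslant \int_0^{t_k}(t_{k+1}-t)^{\alpha-2}
  \nm{P_hf'(t)}_{L^2(\Omega)}\,\mathrm{d}t,
\]
followed by $\nm{P_hf'}_{L^2(\Omega)} \leqslant \nm{f'}_{L^2(\Omega)}$. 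Adding the spatial bound from \cref{lem:space} then yields \cref{eq:conv-new}.
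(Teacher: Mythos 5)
Your proposal is correct and follows essentially the same route as the paper: triangle inequality splitting off the spatial semidiscretization error handled by \cref{lem:space}, then diagonalization of $-\Delta_h$ and componentwise application of \cref{thm:y-Y-new} with a uniform $\mu_0$ guaranteed by the boundedness of $\tau^\alpha/h_{\min}^2$. In fact you supply more detail than the paper does (the uniformity of the constant across eigenmodes and the Minkowski step for the $f'$-term are left implicit there), and those details are right.
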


\begin{rem}
  \label{rem:ratio}
  From \cref{rem:mu-infty} it follows that the implicit constants in
  \cref{eq:conv,eq:conv-new} will approach infinity as $ \tau^\alpha/h^2 \to \infty
  $.
\end{rem}

\section{Numerical experiments}
\label{sec:numer}
\subsection{\texorpdfstring{\cref{discr:ode-1,discr:ode-2}}{}}
For equation \cref{eq:ode-y}, we set $ \lambda = 1 $ and consider the following three
problems:
\begin{enumerate}
  \item [(a).] $ y_0 := 1 $, $ y_1 := 0 $, and $ f(t) := 0 $;
  \item [(b).] $ y_0 := 0 $, $ y_1 := 1 $, and $ f(t) := 0 $;
  \item [(c).] $ y_0 := 0 $, $ y_1 := 0 $, and $ f(t) := 1+t^{0.2} $.
\end{enumerate}
In this subsection, ``Error" means the error of the numerical solution at $ t=1 $,
where the reference solution is the numerical solution of \cref{discr:ode-2} with $
\tau=2^{-18} $. The numerical results in \cref{tab:y0,tab:y1,tab:ode-f} demonstrate
that the accuracies of \cref{discr:ode-1,discr:ode-2} are close to $ \mathcal
O(\tau^{3-\alpha}) $ and $ \mathcal O(\tau^2) $, respectively, which agrees well with
\cref{thm:y-Y-y0,thm:y-Y-f,thm:y-Y-new}.

\begin{table}[H]
  \footnotesize \setlength{\tabcolsep}{1.0pt}
  \caption{Convergence history of \cref{discr:ode-1,discr:ode-2} for problem $\mathrm{(a)}$}
  \label{tab:y0}
  \begin{tabular}{ccccccccccccc}
    \toprule
    \multicolumn{7}{c}{\cref{discr:ode-1}} &
    \multicolumn{6}{c}{\cref{discr:ode-2}} \\
    \cmidrule(r){2-7} \cmidrule(r){8-13} &
    \multicolumn{2}{c}{$\alpha=1.2$} &
    \multicolumn{2}{c}{$\alpha=1.4$} &
    \multicolumn{2}{c}{$\alpha=1.8$} &
    \multicolumn{2}{c}{$\alpha=1.2$} &
    \multicolumn{2}{c}{$\alpha=1.4$} &
    \multicolumn{2}{c}{$\alpha=1.8$} \\
    \cmidrule(r){2-3}
    \cmidrule(r){4-5}
    \cmidrule(r){6-7}
    \cmidrule(r){8-9}
    \cmidrule(r){10-11}
    \cmidrule(r){12-13}
    $\tau$      & Error   & Order & Error   & Order & Error   & Order & Error    & Order & Error    & Order & Error    & Order \\
    $2^{-\!10}$ & 2.05e-7 & --    & 8.40e-7 & --    & 4.97e-5 & --    & 6.15e-8  & --    & 4.19e-08 & --    & 8.47e-8  & --    \\
    $2^{-\!11}$ & 6.12e-8 & 1.75  & 2.81e-7 & 1.58  & 2.16e-5 & 1.20  & 1.54e-8  & 2.00  & 1.07e-08 & 1.98  & 1.97e-8  & 2.10  \\
    $2^{-\!12}$ & 1.81e-8 & 1.75  & 9.35e-8 & 1.59  & 9.42e-6 & 1.20  & 3.84e-9  & 2.00  & 2.70e-09 & 1.98  & 4.62e-9  & 2.09  \\
    $2^{-\!13}$ & 5.35e-9 & 1.76  & 3.11e-8 & 1.59  & 4.10e-6 & 1.20  & 9.61e-10 & 2.00  & 6.79e-10 & 1.99  & 1.09e-9  & 2.09  \\
    $2^{-\!14}$ & 1.57e-9 & 1.77  & 1.03e-8 & 1.59  & 1.78e-6 & 1.20  & 2.42e-10 & 1.99  & 1.71e-10 & 1.99  & 2.57e-10 & 2.08  \\
    \bottomrule
  \end{tabular}
\end{table}

\begin{table}[H]
  \footnotesize \setlength{\tabcolsep}{1.0pt}
  \caption{Convergence history of \cref{discr:ode-1,discr:ode-2} for problem $\mathrm{(b)}$}
  \label{tab:y1}
  \begin{tabular}{ccccccccccccc}
    \toprule
    \multicolumn{7}{c}{\cref{discr:ode-1}} &
    \multicolumn{6}{c}{\cref{discr:ode-2}} \\
    \cmidrule(r){2-7} \cmidrule(r){8-13} &
    \multicolumn{2}{c}{$\alpha=1.2$} &
    \multicolumn{2}{c}{$\alpha=1.5$} &
    \multicolumn{2}{c}{$\alpha=1.9$} &
    \multicolumn{2}{c}{$\alpha=1.2$} &
    \multicolumn{2}{c}{$\alpha=1.5$} &
    \multicolumn{2}{c}{$\alpha=1.9$} \\
    \cmidrule(r){2-3}
    \cmidrule(r){4-5}
    \cmidrule(r){6-7}
    \cmidrule(r){8-9}
    \cmidrule(r){10-11}
    \cmidrule(r){12-13}
    $\tau$      & Error   & Order & Error   & Order & Error   & Order & Error    & Order & Error    & Order & Error    & Order \\
    $2^{-\!7}$  & 2.22e-6 & --   & 7.90e-5 & --   & 9.98e-4 & --   & 2.65e-6 & --   & 3.67e-6 & --   & 4.58e-6 & --   \\
    $2^{-\!8}$  & 7.32e-7 & 1.60 & 2.83e-5 & 1.48 & 4.67e-4 & 1.10 & 6.53e-7 & 2.02 & 9.13e-7 & 2.01 & 9.95e-7 & 2.20 \\
    $2^{-\!9}$  & 2.34e-7 & 1.65 & 1.01e-5 & 1.49 & 2.18e-4 & 1.10 & 1.62e-7 & 2.01 & 2.27e-7 & 2.01 & 2.14e-7 & 2.22 \\
    $2^{-\!10}$ & 7.31e-8 & 1.68 & 3.60e-6 & 1.49 & 1.02e-4 & 1.10 & 4.03e-8 & 2.01 & 5.66e-8 & 2.00 & 4.54e-8 & 2.24 \\
    $2^{-\!11}$ & 2.25e-8 & 1.70 & 1.28e-6 & 1.49 & 4.75e-5 & 1.10 & 1.01e-8 & 2.00 & 1.41e-8 & 2.00 & 9.52e-9 & 2.26 \\
    \bottomrule
  \end{tabular}
\end{table}

\begin{table}[H]
  \footnotesize \setlength{\tabcolsep}{1.0pt}
  \caption{Convergence history of \cref{discr:ode-1,discr:ode-2} for problem $\mathrm{(c)}$}
  \label{tab:ode-f}
  \begin{tabular}{ccccccccccccc}
    \toprule
    \multicolumn{7}{c}{\cref{discr:ode-1}} &
    \multicolumn{6}{c}{\cref{discr:ode-2}} \\
    \cmidrule(r){2-7} \cmidrule(r){8-13} &
    \multicolumn{2}{c}{$\alpha=1.2$} &
    \multicolumn{2}{c}{$\alpha=1.4$} &
    \multicolumn{2}{c}{$\alpha=1.9$} &
    \multicolumn{2}{c}{$\alpha=1.2$} &
    \multicolumn{2}{c}{$\alpha=1.4$} &
    \multicolumn{2}{c}{$\alpha=1.9$} \\
    \cmidrule(r){2-3}
    \cmidrule(r){4-5}
    \cmidrule(r){6-7}
    \cmidrule(r){8-9}
    \cmidrule(r){10-11}
    \cmidrule(r){12-13}
    $\tau$      & Error   & Order & Error   & Order & Error   & Order & Error   & Order & Error   & Order & Error   & Order \\
    $2^{-\!7}$  & 1.36e-5 & --    & 2.73e-5 & --    & 2.61e-3 & --    & 5.31e-6 & --    & 2.35e-6 & --    & 9.39e-6 & --    \\
    $2^{-\!8}$  & 4.11e-6 & 1.72  & 9.31e-6 & 1.55  & 1.22e-3 & 1.10  & 1.32e-6 & 2.01  & 6.62e-7 & 1.83  & 2.32e-6 & 2.02  \\
    $2^{-\!9}$  & 1.23e-6 & 1.74  & 3.14e-6 & 1.57  & 5.70e-4 & 1.10  & 3.27e-7 & 2.01  & 1.77e-7 & 1.90  & 5.71e-7 & 2.02  \\
    $2^{-\!10}$ & 3.67e-7 & 1.75  & 1.05e-6 & 1.58  & 2.66e-4 & 1.10  & 8.10e-8 & 2.01  & 4.61e-8 & 1.94  & 1.40e-7 & 2.03  \\
    $2^{-\!11}$ & 1.08e-7 & 1.76  & 3.52e-7 & 1.58  & 1.24e-4 & 1.10  & 2.01e-8 & 2.01  & 1.18e-8 & 1.96  & 3.44e-8 & 2.03  \\
    \bottomrule
  \end{tabular}
\end{table}

\subsection{\texorpdfstring{\cref{discr:1,discr:2}}{}}
For equation \cref{eq:model} in the case $ \Omega = (0,1) $, we consider the
following three problems:
\begin{enumerate}
  \item [(d).] $ u_0(x) := x^{-0.49} $, $ u_1(x) := 0 $, and $ f(x,t) := 0 $;
  \item [(e).] $ u_0(x) := 0 $, $ u_1(x) := x^{-0.49} $, and $ f(x,t) := 0 $;
  \item [(f).] $ u_0(x) := 0 $, $ u_1(x) := 0 $, and $ f(x,t) := x^{-0.49}(1+t^{0.2}) $.
\end{enumerate}
Throughout this subsection, we will use uniform spatial grids, and ``Error1" and
``Error2" denote the errors (in $ L^2(\Omega) $-norm) of the numerical solutions of
\cref{discr:1,discr:2} at $ t=1 $, respectively, where the reference solution is the
numerical solution of \cref{discr:2} with $ h=2^{-11} $ and $ \tau=2^{-16} $.

\medskip\noindent\textbf{Experiment 1.} This experiment verifies the spatial
accuracies of \cref{discr:1,discr:2}. \cref{tab:space} demonstrates that the spatial
accuracy of \cref{discr:1} is close to $ \mathcal O(h^2) $, which is in good
agreement with \cref{thm:conv}. Since the numerical results of \cref{discr:2} are
almost identical to that of \cref{discr:1}, they are omitted here.

\begin{table}[H]
  \caption{Convergence history of \cref{discr:1} for problems $\mathrm{(d)}$,
  $\mathrm{(e)}$ and $ \mathrm{(f)} $ with $ \tau = 2^{-16} $}
  \label{tab:space}
  \footnotesize
  \setlength{\tabcolsep}{6pt}
  \begin{tabular}{cccccccc}
    \toprule & &
    \multicolumn{2}{c}{$\alpha=1.2$} &
    \multicolumn{2}{c}{$\alpha=1.4$} &
    \multicolumn{2}{c}{$\alpha=1.8$} \\
    \cmidrule(r){3-4}
    \cmidrule(r){5-6}
    \cmidrule(r){7-8}
    & $h$ & Error1 & Order & Error1 & Order & Error1 & Order \\
    \midrule
    \multirow{5}{*}{Problem (d)}
    & $2^{-3}$ & 1.07e-3 & --   & 4.43e-3 & --   & 4.74e-2 & --   \\
    & $2^{-4}$ & 2.73e-4 & 1.97 & 1.12e-3 & 1.99 & 1.57e-2 & 1.59 \\
    & $2^{-5}$ & 6.94e-5 & 1.98 & 2.80e-4 & 2.00 & 4.46e-3 & 1.82 \\
    & $2^{-6}$ & 1.76e-5 & 1.98 & 7.00e-5 & 2.00 & 1.15e-3 & 1.96 \\
    & $2^{-7}$ & 4.45e-6 & 1.98 & 1.75e-5 & 2.00 & 2.85e-4 & 2.01 \\
    \midrule
    \multirow{5}{*}{Problem (e)}
    & $2^{-3}$ & 2.71e-3 & --   & 2.33e-3 & --   & 7.76e-3 & --   \\
    & $2^{-4}$ & 7.21e-4 & 1.91 & 6.15e-4 & 1.92 & 2.04e-3 & 1.93 \\
    & $2^{-5}$ & 1.90e-4 & 1.92 & 1.61e-4 & 1.93 & 5.10e-4 & 2.00 \\
    & $2^{-6}$ & 4.97e-5 & 1.93 & 4.19e-5 & 1.94 & 1.27e-4 & 2.00 \\
    & $2^{-7}$ & 1.29e-5 & 1.94 & 1.08e-5 & 1.95 & 3.16e-5 & 2.00 \\
    \midrule
    \multirow{5}{*}{Problem (f)}
    & $2^{-3}$ & 6.12e-3 & --   & 6.64e-3 & --   & 8.77e-3 & --   \\
    & $2^{-4}$ & 1.63e-3 & 1.91 & 1.75e-3 & 1.92 & 2.27e-3 & 1.95 \\
    & $2^{-5}$ & 4.31e-4 & 1.92 & 4.60e-4 & 1.93 & 5.86e-4 & 1.96 \\
    & $2^{-6}$ & 1.13e-4 & 1.93 & 1.20e-4 & 1.94 & 1.51e-4 & 1.96 \\
    & $2^{-7}$ & 2.95e-5 & 1.94 & 3.12e-5 & 1.95 & 3.89e-5 & 1.95 \\
    \bottomrule
  \end{tabular}
\end{table}



\noindent\textbf{Experiment 2.} To obtain the temporal accuracies $ \mathcal
O(\tau^{3-\alpha}) $ and $ \mathcal O(\tau^2) $ of \cref{discr:1,discr:2},
respectively, \cref{thm:conv,thm:conv-new} require the ratio $ \tau^\alpha/h^2 $ to
be uniformly bounded. Hence, this experiment verifies the temporal accuracies of
\cref{discr:1,discr:2} in an indirect way. In this experiment, we set $ \tau^\alpha =
h^2 $. \cref{thm:conv} predicts that ``Error1" is close to $ \mathcal O(h^2) $ for $
1 < \alpha \leqslant 3/2 $ and close to $ \mathcal O(h^{6/\alpha-2}) $ for $ 3/2 <
\alpha < 2 $. \cref{thm:conv-new} predicts that ``Error2" is close to $ \mathcal
O(h^2) $ for all $ 1 < \alpha < 2 $. The above two predictions are confirmed by the
numerical results in \cref{tab:ex2-d,tab:ex2-e,tab:ex2-f}.

\begin{table}[H]
  \footnotesize \setlength{\tabcolsep}{1.0pt}
  \caption{Convergence history of \cref{discr:1,discr:2} for problem $\mathrm{(d)}$}
  \label{tab:ex2-d}
  \begin{tabular}{ccccccccccccc}
    \toprule &
    \multicolumn{4}{c}{$\alpha=1.2$} &
    \multicolumn{4}{c}{$\alpha=1.5$} &
    \multicolumn{4}{c}{$\alpha=1.8$} \\
    \cmidrule(r){2-5}
    \cmidrule(r){6-9}
    \cmidrule(r){10-13}
    $h$      & Error1   & Order & Error2   & Order & Error1   & Order & Error2    & Order & Error1    & Order & Error2    & Order \\
    $2^{-5}$ & 6.87e-5 & --   & 6.99e-5 & --   & 4.83e-4 & --   & 5.00e-4 & --   & 4.19e-2 & --   & 7.12e-3 & --   \\
    $2^{-6}$ & 1.75e-5 & 1.97 & 1.76e-5 & 1.99 & 1.25e-4 & 1.95 & 1.25e-4 & 2.00 & 1.98e-2 & 1.08 & 1.27e-3 & 2.48 \\
    $2^{-7}$ & 4.44e-6 & 1.98 & 4.45e-6 & 1.99 & 3.16e-5 & 1.98 & 3.10e-5 & 2.01 & 8.57e-3 & 1.21 & 2.59e-4 & 2.29 \\
    $2^{-8}$ & 1.11e-6 & 1.99 & 1.12e-6 & 2.00 & 8.08e-6 & 1.97 & 7.65e-6 & 2.02 & 3.54e-3 & 1.27 & 5.88e-5 & 2.14 \\
    \bottomrule
  \end{tabular}
\end{table}

\begin{table}[H]
  \footnotesize \setlength{\tabcolsep}{1.0pt}
  \caption{Convergence history of \cref{discr:1,discr:2} for problem $\mathrm{(e)}$}
  \label{tab:ex2-e}
  \begin{tabular}{ccccccccccccc}
    \toprule
    & \multicolumn{4}{c}{$\alpha=1.2$} &
    \multicolumn{4}{c}{$\alpha=1.5$} &
    \multicolumn{4}{c}{$\alpha=1.8$} \\
    \cmidrule(r){2-5}
    \cmidrule(r){6-9}
    \cmidrule(r){10-13}
    \cmidrule(r){10-11}
    $h$      & Error1  & Order & Error2  & Order & Error1  & Order & Error2  & Order & Error1  & Order & Error2  & Order \\
    $2^{-5}$ & 1.90e-4 & --   & 1.90e-4 & --   & 1.17e-4 & --   & 1.71e-4 & --   & 6.12e-3 & --   & 6.06e-4 & --   \\
    $2^{-6}$ & 4.98e-5 & 1.93 & 4.97e-5 & 1.93 & 3.14e-5 & 1.90 & 4.23e-5 & 2.01 & 2.60e-3 & 1.24 & 1.08e-4 & 2.48 \\
    $2^{-7}$ & 1.29e-5 & 1.94 & 1.29e-5 & 1.94 & 8.25e-6 & 1.93 & 1.06e-5 & 1.99 & 1.06e-3 & 1.29 & 1.96e-5 & 2.46 \\
    $2^{-8}$ & 3.33e-6 & 1.96 & 3.33e-5 & 1.96 & 2.15e-6 & 1.94 & 2.68e-6 & 1.99 & 4.26e-4 & 1.31 & 4.44e-6 & 2.15 \\
    \bottomrule
  \end{tabular}
\end{table}

\begin{table}[H]
  \footnotesize \setlength{\tabcolsep}{1.0pt}
  \caption{Convergence history of \cref{discr:1,discr:2} for problem $\mathrm{(f)}$}
  \label{tab:ex2-f}
  \begin{tabular}{ccccccccccccc}
    \toprule &
    \multicolumn{4}{c}{$\alpha=1.2$} &
    \multicolumn{4}{c}{$\alpha=1.5$} &
    \multicolumn{4}{c}{$\alpha=1.9$} \\
    \cmidrule(r){2-5}
    \cmidrule(r){6-9}
    \cmidrule(r){10-13}
    $h$      & Error1  & Order & Error2  & Order & Error1  & Order & Error2  & Order & Error1  & Order & Error2  & Order \\
    $2^{-4}$ & 1.63e-3 & --   & 1.63e-3 & --   & 1.53e-3 & --   & 1.87e-3 & --   & 2.22e-2 & --   & 2.61e-3 & --   \\
    $2^{-5}$ & 4.31e-4 & 1.92 & 4.31e-4 & 1.92 & 4.06e-4 & 1.92 & 4.91e-4 & 1.93 & 1.03e-2 & 1.10 & 7.35e-4 & 1.83 \\
    $2^{-6}$ & 1.13e-4 & 1.93 & 1.13e-4 & 1.93 & 1.07e-4 & 1.92 & 1.28e-4 & 1.94 & 4.75e-3 & 1.12 & 1.79e-4 & 2.03 \\
    $2^{-7}$ & 2.95e-5 & 1.94 & 2.95e-5 & 1.94 & 2.80e-5 & 1.93 & 3.30e-5 & 1.95 & 2.15e-3 & 1.15 & 4.17e-5 & 2.11 \\
    \bottomrule
  \end{tabular}
\end{table}

\noindent\textbf{Experiment 3.} This experiment investigates the effect of large
ration $ \tau^\alpha/h^2 $ on the accuracy of \cref{discr:1,discr:2} for problem (d).
The numerical results in \cref{tab:ratio} illustrate that, with fixed $ \tau $, the
accuracy of \cref{discr:1,discr:2} will deteriorate as $ h \to {0+} $, which confirms
\cref{rem:ratio}.

\begin{table}[H]
  \caption{Convergence history of \cref{discr:1,discr:2} for problem $ \mathrm{(d)} $
  with $ \tau = 2^{-5} $}
  \label{tab:ratio}
  \footnotesize
  \setlength{\tabcolsep}{3pt}
  \begin{tabular}{ccccccc}
    \toprule &
    \multicolumn{2}{c}{$\alpha=1.2$} &
    \multicolumn{2}{c}{$\alpha=1.4$} &
    \multicolumn{2}{c}{$\alpha=1.8$} \\
    \cmidrule(r){2-3}
    \cmidrule(r){4-5}
    \cmidrule(r){6-7}
    $h$ & $ \text{Error1} $ & Error2
    & $ \text{Error1} $ & Error2
    & $ \text{Error1} $ & Error2 \\
    $2^{-4}$ & 3.20e-3 & 5.90e-4 & 1.29e-3 & 1.52e-3 & 5.82e-2 & 1.12e-2 \\
    $2^{-5}$ & 1.04e-1 & 6.00e-2 & 1.72e-2 & 4.05e-3 & 6.13e-2 & 2.25e-2 \\
    $2^{-6}$ & 3.81e-1 & 3.04e-1 & 1.87e-1 & 1.17e-1 & 6.29e-2 & 2.60e-2 \\
    $2^{-7}$ & 7.04e-1 & 6.26e-1 & 4.94e-1 & 4.00e-1 & 1.59e-1 & 6.81e-2 \\
    $2^{-8}$ & 9.97e-1 & 9.28e-1 & 8.09e-1 & 7.20e-1 & 4.44e-1 & 3.09e-1 \\
    $2^{-9}$ & 1.25e-0 & 1.19e-0 & 8.09e-1 & 1.01e-0 & 7.58e-1 & 6.27e-1 \\
    \bottomrule
  \end{tabular}
\end{table}

\section{Conclusion}
\label{sec:conclusion}
The well-known L1 scheme for fractional wave equations is analyzed in this paper. New
stability estimate is established, and temporal accuracy $ \mathcal
O(\tau^{3-\alpha}) $ is derived for nonsmooth initial values $ u_0 $ and $ u_1 $. A
modified L1 scheme is also proposed, which possesses temporal accuracy $ \mathcal
O(\tau^2) $. The theoretical results reveal that $ \tau^\alpha / h_{\min}^2 $ should
be uniformly bounded, where $ h_{\min} $ is the minimum diameter of the elements in $
\mathcal K_h $; otherwise, the temporal accuracy will deteriorate. Numerical
experiments are performed to verify the theoretical results.

If the temporal grid is nonuniform or the governing equation is of the form
\[
  \D_{0+}^{\alpha-1}(u'-u_1)(t) - \mathrm{div}(a(x,t) \nabla u(t)) = f(t),
  \quad t > 0,
\]
then the techniques used in this paper can not be applied. Hence, an interesting
question is, on the nonuniform temporal grid or for the above equation, how to derive
sharp error estimates for the L1 scheme with nonsmooth data. This will be our future
work.


\begin{thebibliography}{10}
	
	\bibitem{Adolfsson2003}
	K. Adolfsson, M. Enelund, and S. Larsson.
	\newblock Adaptive discretization of an integro-differential equation with a
	weakly singular convolution kernel.
	\newblock {\em Computer Methods in Applied Mechanics and Engineering},
	192(51):5285 -- 5304, 2003.
	
	\bibitem{Arendt2011}
	W.~Arendt, C.~Batty, M.~Hieber, and F.~Neubrander.
	\newblock {\em Vector-valued Laplace transforms and Cauchy problems}.
	\newblock Birkh\"auser Basel, 2011.
	
	\bibitem{bouchaud1990anomalous}
	J. Bouchaud and A. Georges.
	\newblock Anomalous diffusion in disordered media: Statistical mechanisms,
	models and physical applications.
	\newblock {\em Physics Reports}, 195(45):127--293, 1990.
	
	\bibitem{carpinteri2011a}
	A. Carpinteri, P. Cornetti, and A. G. Sapora.
	\newblock A fractional calculus approach to nonlocal elasticity.
	\newblock {\em European Physical Journal-special Topics}, 193(1):193--204,
	2011.
	
	\bibitem{Chen2015spectral}
	F. Chen, Q. Xu, and J. Hesthaven.
	\newblock A multi-domain spectral method for time-fractional differential
	equations.
	\newblock {\em Journal of Computational Physics}, 293(1):157--172, 2015.
	
	\bibitem{Cuesta2006}
	E.~Cuesta, C.~Lubich, and C.~Palencia.
	\newblock Convolution quadrature time discretization of fractional
	diffusion-wave equations.
	\newblock {\em Mathematics of Computation}, 65(213):1--17, 1996.
	
	\bibitem{Cuesta2003}
	E.~Cuesta and C.~Palencia.
	\newblock A fractional trapezoidal rule for integro-differential equations of
	fractional order in banach spaces.
	\newblock {\em Applied Numerical Mathematics}, 45(2):139--159, 2003.
	
	\bibitem{Ervin2006}
	V. Ervin and J. Roop,
	\newblock Variational formulation for the stationary fractional advection
	dispersion equation.
	\newblock{\em Numerical Methods for Partial Differential Equations}, 22(3):558--576, 2006.
	
	\bibitem{Gao2014A}
	G. Gao, Z. Sun, and H. Zhang.
	\newblock A new fractional numerical differentiation formula to approximate the
	Caputo fractional derivative and its applications.
	\newblock {\em Journal of Computational Physics}, 259(2):33--50, 2014.
	
	\bibitem{gerolymatou2006modelling}
	E. Gerolymatou, I.~Vardoulakis, and R.~Hilfer.
	\newblock Modelling infiltration by means of a nonlinear fractional diffusion
	model.
	\newblock {\em Journal of Physics D}, 39(18):4104--4110, 2006.
	
	\bibitem{Jin2015}
	B.~Jin, R.~Lazarov, and Z.~Zhou.
	\newblock An analysis of the L1 scheme for the subdiffusion equation with
	nonsmooth data.
	\newblock {\em IMA Journal of Numerical Analysis}, 36(1):197--221, 2016.
	
	\bibitem{Jin2016}
	B.~Jin, R.~Lazarov, and Z.~Zhou.
	\newblock Two fully discrete schemes for fractional diffusion and
	diffusion-wave equations with nonsmooth data.
	\newblock {\em SIAM journal on scientific computing}, 38(1):A146--A170, 2016.
	
	\bibitem{Kilbas2006Theory}
	A.~A. Kilbas, H.~M. Srivastava, and J.~J. Trujillo.
	\newblock Theory and applications of fractional differential equations.
	\newblock{\em North-Holland Mathematics Studies,}
	\newblock 2006.
	
	\bibitem{Langlands2005The}
	T.~A.~M. Langlands and B.~I. Henry.
	\newblock The accuracy and stability of an implicit solution method for the
	fractional diffusion equation.
	\newblock {\em Journal of Computational Physics}, 205(2):719--736, 2005.
	
	\bibitem{Li2018Atime}
	B.~Li, H.~Luo, and X.~Xie.
	\newblock A time-spectral algorithm for fractional wave problems.
	\newblock {\em Journal of Scientific Computing}, 7(2):1164--1184, 2018.
	
	\bibitem{Li2018Analysis}
	B.~Li, H.~Luo, and X.~Xie.
	\newblock Analysis of a time-stepping scheme for time fractional diffusion
	problems with nonsmooth data.
	\newblock {\em SIAM Journal on Numerical Analysis (accepted)}, arXiv:1804.10552, 2019.
	
	\bibitem{li2009a}
	X. Li and C. Xu.
	\newblock A space-time spectral method for the time fractional diffusion
	equation.
	\newblock {\em SIAM Journal on Numerical Analysis}, 47(3):2108--2131, 2009.
	
	\bibitem{Liao2018}
	H.~Liao, D.~Li, and J.~Zhang.
	\newblock Sharp error estimate of the nonuniform L1 formula for linear
	reaction-subdiffusion equations.
	\newblock {\em SIAM Journal on Numerical Analysis}, 56(2):1112--1133, 2018.
	
	\bibitem{Lin2007Finite}
	Y.~Lin and C.~Xu.
	\newblock Finite difference/spectral approximations for the time-fractional
	diffusion equation.
	\newblock {\em Journal of Computational Physics}, 225(2):1533--1552, 2007.
	
	\bibitem{Lubich1988}
	C.~Lubich.
	\newblock Convolution quadrature and discretized operational calculus.
	\newblock {\em Numerische Mathematik}, 52(4):129--145, 1988.
	
	\bibitem{Lubich1996}
	C.~Lubich, I.~Sloan, and V.~Thom\'ee.
	\newblock Nonsmooth data error estimates for approximations of an evolution
	equation with a positive-type memory term.
	\newblock {\em Mathematics of Computation}, 65(213):1--17, 1996.
	
	\bibitem{Luo2018Convergence}
	H.~Luo, B.~Li, and X.~Xie.
	\newblock Convergence analysis of a {P}etrov-{G}alerkin method for fractional
	wave problems with nonsmooth data.
	\newblock {\em Journal of Scientific Computing}, https://doi.org/10.1007/s10915-019-00962-x, 2019.
	
	\bibitem{McLean1993}
	W.~McLean, V.~Thom\'ee, and L.~B. Wahlbin.
	\newblock Numerical solution of an evolution equation with a positive type
	memory term.
	\newblock {\em The Journal of the Australian Mathematical Society}, B(35):23--70, 1993.
	
	\bibitem{McLean1996}
	W.~McLean, V.~Thom\'ee, and L. B. Wahlbin.
	\newblock Discretization with variable time steps of an evolution equation with
	a positive-type memory term.
	\newblock {\em Journal of Computational and Applied Mathematics}, 69(1):49 --
	69, 1996.
	
	\bibitem{McLean2015} W.~McLean, K.~Mustapha
	\newblock Time-stepping error bounds for fractional diffusion problems with non-smooth initial data.\newblock{\em Journal of Computational Physics},  293(1):201-217, 2015.
	
	\bibitem{McLean2007}
	W.~McLean and K.~Mustapha.
	\newblock A second-order accurate numerical method for a fractional wave
	equation.
	\newblock {\em Numerische Mathematik}, 105(3):481--510, 2007.
	
	\bibitem{Mustapha2015Time}
	K.~Mustapha.
	\newblock {\em Time-stepping discontinuous Galerkin methods for fractional
		diffusion problems}.
	\newblock Springer-Verlag New York, 2015.
	
	\bibitem{Mustapha2009Discontinuous}
	K.~Mustapha and W.~McLean.
	\newblock Discontinuous Galerkin method for an evolution equation with a memory
	term of positive type.
	\newblock {\em Mathematics of Computation}, 78(268):1975--1995, 2009.
	
	\bibitem{Mustapha2011Piecewise}
	K.~Mustapha and W.~McLean.
	\newblock Piecewise-linear, discontinuous Galerkin method for a fractional
	diffusion equation.
	\newblock {\em Numerical Algorithms}, 56(2):159--184, 2011.
	
	\bibitem{Mustapha2012Superconvergence}
	K.~Mustapha and W.~McLean.
	\newblock Superconvergence of a discontinuous Galerkin method for fractional
	diffusion and wave equations.
	\newblock {\em SIAM Journal on Numerical Analysis}, 51(1):491--515, 2012.
	
	\bibitem{nigmatullin1986the}
	R.~R. Nigmatullin.
	\newblock The realization of the generalized transfer equation in a medium with
	fractal geometry.
	\newblock {\em Physica Status Solidi B-basic Solid State Physics},
	133(1):425--430, 1986.
	
	\bibitem{Ren2017}
	J.~Ren, X.~Long, S.~Mao, and J.~Zhang.
	\newblock Superconvergence of finite element approximations for the fractional
	diffusion-wave equation.
	\newblock {\em Journal of Scientific Computing}, 72(3): 917--935, 2017.
	
	\bibitem{Sun2006}
	Z.~Sun and X.~Wu.
	\newblock A fully discrete difference scheme for a diffusion-wave system.
	\newblock {\em Applied Numerical Mathematics}, 56(2):193--209, 2006.
	
	\bibitem{Tartar2007}
	L.~Tartar.
	\newblock {\em An Introduction to Sobolev Spaces and Interpolation Spaces}.
	\newblock Springer Berlin, 2007.
	
	\bibitem{Wood1992}
	D.~Wood.
	\newblock The computation of polylogarithms, technical report, 15-29.
	\newblock 1992.
	
	\bibitem{Yan2018}
	Y.~Yan, M.~Khan, and N.~J. Ford.
	\newblock An analysis of the modified L1 scheme for time-fractional partial
	differential equations with nonsmooth data.
	\newblock {\em SIAM Journal on Numerical Analysis}, 56(1):210--227, 2018.
	
	\bibitem{Yuste2005}
	S.~B. Yuste and L.~Acedo.
	\newblock An explicit finite difference method and a new von neumann-type
	stability analysis for fractional diffusion equations.
	\newblock {\em SIAM Journal on Numerical Analysis}, 42(5):1862--1874, 2005.
	
	
	
	
\end{thebibliography}
\end{document}